\newcommand{\tto}[1]{\stackrel{#1}{\longrightarrow}}
\newtheorem{lemma}{Lemma}[section]
\newtheorem{teo}[lemma]{Theorem}
\newtheorem{prop}[lemma]{Proposition}
\theoremstyle{definition}
\newtheorem{defn}[lemma]{Definition}
\newtheorem{rem}[lemma]{Remark}
\newcommand{\matB}{\mathbb{B}}
\newcommand{\wdtX}{\widetilde{X}}
\newcommand{\wdtW}{\widetilde{W}}
\newcommand{\matN}{\ensuremath {\mathbb{N}}}
\newcommand{\matR} {\ensuremath {\mathbb{R}}}
\newcommand{\calH} {\ensuremath {\mathcal{H}}}
\newcommand{\R} {\ensuremath {\mathbb{R}}}
\newcommand{\mea} {\mathrm{m}}
\newcommand{\mh} {\mathrm{mh}}
\newcommand{\hatgamma} {\widehat{\gamma}}
\newcommand{\lamtil} {\widetilde{\Lambda}}
\newcommand{\calC} {\ensuremath {\mathcal{C}}}
\newcommand{\str} {\ensuremath {{\rm str}}}
\newcommand{\strtil} {\ensuremath {\widetilde{\rm str}}}
\author{Roberto Frigerio}
\author{Cristina Pagliantini}
\address{Dipartimento di Matematica \\
Universit\`a di Pisa \\
Largo B.~Pontecorvo 5 \\
56127 Pisa, Italy}
\email{frigerio@dm.unipi.it, pagliantini@mail.dm.unipi.it}
\title[Measure homology and bounded cohomology of pairs]{Relative measure homology\\ and continuous
bounded cohomology\\ of topological pairs}
\subjclass[2000]{55N10, 55N35 (primary); 20J06, 55U15, 57N65 (secondary)}
\keywords{Simplicial volume, singular homology, bounded cohomology of groups, CAT(0) spaces}
\thanks{}
\begin{document}
\begin{abstract}
Measure homology was introduced by Thurston 
in his notes about the geometry and topology of 3-manifolds, where it was
exploited in the computation of the simplicial volume of hyperbolic manifolds. 
Zastrow and Hansen independently proved 
that there exists a canonical isomorphism between measure homology and singular homology (on the category of CW-complexes), and it was then shown by 
L\"oh that, in the absolute case, 
such isomorphism is in fact an isometry
with respect to the $L^1$-seminorm on singular homology and the total variation seminorm 
on measure homology. L\"oh's result
plays a fundamental r\^ole in the use of measure homology as a tool for computing
the simplicial volume of Riemannian manifolds.

This paper deals with an extension of L\"oh's result to the relative case. 
We prove that relative 
singular homology and relative measure homology are isometrically isomorphic
for a wide class of topological pairs. Our results can be applied for instance in computing
the simplicial volume of Riemannian manifolds with boundary.

Our arguments are based on new results about continuous (bounded) cohomology
of topological pairs, which are probably of independent interest. 
\end{abstract}
\maketitle

\section{Introduction}
Measure homology was introduced by Thurston in~\cite{Thurston},
where it was  exploited in the proof that the 
simplicial volume of a closed hyperbolic $n$-manifold
is equal to its Riemannian volume divided by a constant only depending on
$n$ (this result is attributed in~\cite{Thurston} to Gromov).
In order to rely on measure homology, it is necessary to know that
this theory ``coincides''
with the usual real singular homology, at least for a large class of spaces.
The proof that measure homology and real singular homology 
of CW-pairs
are isomorphic has appeared in~\cite{Hansen,Zastrow}.
However, in order to exploit measure homology as a tool for computing
the simplicial volume, one has to show that these homology theories
are not only isomorphic, but also \emph{isometric} (with respect to
the seminorms introduced below). In the absolute
case, this result is achieved in~\cite{Loh}. Our paper is devoted
to extending L\"oh's result to the context of relative homology
of topological pairs. As mentioned in~\cite[Appendix A]{Fujiwara} 
and~\cite[Remark 4.22]{Lohtesi}, such an extension seems to rise difficulties
that suggest that L\"oh's argument should not admit a straightforward
translation
into the relative context. For a detailed account about the notion
of measure homology and its applications see \emph{e.g.}~the introductions
of \cite{Zastrow, Berlanga}.

In order to achieve our main results, we develop some aspects
of the theory of
continuous bounded cohomology of topological pairs. More precisely, we compare
such a theory with the usual bounded cohomology of pairs of
groups and spaces. In~\cite{Park},
Park provides the algebraic foundations to the theory of relative bounded cohomology,
extending Ivanov's homological algebra approach (see~\cite{Ivanov}) to the relative
case.  
However, 
Park endows the bounded cohomology of a pair of spaces with
a seminorm which is \emph{a priori}
different from the seminorm considered in this paper.
In fact,
the most common definition of simplicial volume 
is based on a specific
$L^1$-seminorm on singular homology, 
whose dual is just the $L^\infty$-seminorm on bounded cohomology
defined in~\cite[Section 4.1]{Gromov}. This seminorm does not coincide
\emph{a priori} with Park's seminorm, so our results cannot be deduced from
Park's arguments. More precisely, 
it is shown in~\cite[Theorem 4.6]{Park}
that Gromov's and Park's norms are  biLipschitz equivalent 
(see Theorem~\ref{biLipschitz} below). In~\cite[page 206]{Park}
it is stated that it remains unknown if this equivalence is actually an isometry. 
In Section~\ref{park:sec} we answer this question in the negative, providing examples
showing that Park's and Gromov's seminorms 
indeed do not coincide in general.

\subsection{Relative singular homology of pairs}
Let $X$ be a topological space, and
$W\subseteq X$ be a (possibly empty) subspace of $X$.
For $n\in\matN$ we denote  by $C_n(X)$ the module
of singular $n$-chains with real coefficients, \emph{i.e.}~the $\matR$-module
freely generated by the set $S_n (X)$ of singular 
$n$-simplices with values in $X$. 
The natural inclusion of $W$ in $X$ induces an inclusion of 
$C_n (W)$ into $C_n(X)$,
and we denote by 
$C_n (X,W)$ the quotient space $C_n (X)/C_n (W)$. 
The usual differential of the complex $C_\ast (X)$ defines
a differential $d_\ast\colon\thinspace 
C_{\ast} (X,W)\to C_{\ast -1} (X,W)$. The homology of the resulting
complex is the usual relative singular homology of the 
topological pair $(X,W)$, and will be denoted
by $H_\ast (X,W)$.

The $\matR$-vector space $C_n (X,W)$
can be endowed with the following natural $L^1$-norm:  
if $\alpha\in C_n (X,W)$, then
$$
\|\alpha\|_1 =\inf \left\{\sum_{\sigma\in S_n(X)} 
|a_\sigma|\, ,\, {\rm where}\ 
\alpha=\left[\sum_{\sigma\in S_n (X)} a_\sigma \sigma\right]
\ {\rm in}\ C_n(X)/C_n (W) \right\}.
$$ 
Such a norm
descends to a seminorm on $H_n (X,W)$, which is defined as follows:
if $[\alpha]\in H_n (X,W)$, then
$$
\|[\alpha ]\|_1  =  \inf \{\|\beta \|_1\;|\; \beta\in C_n (X,W),\, 
d_n\beta=0,\, [\beta ]=[\alpha ] \}
$$
(this seminorm can be null on non-zero elements of $H_n (X,W)$).
Of course, we may recover the absolute homology modules of $X$
just by setting $H_n(X)=H_n(X,\emptyset)$.

\subsection{Relative measure homology of pairs}
Let us now recall the definition of relative measure homology
of the pair $(X,W)$.
We endow $S_n(X)$ with the compact-open
topology and denote by $\matB_n(X)$ the $\sigma$-algebra
of Borel subsets of $S_n(X)$. 
If $\mu$ is a signed measure on $\matB_n(X)$ (in this case we say for short
that $\mu$ is a Borel measure on $S_n(X)$), the
\emph{total variation of $\mu$} is defined by the formula
$$\|\mu\|_\mea=\sup_{A\in \matB_n(X)}\mu(A)-\inf_{B\in\matB_n(X)}\mu(B)\quad \in\, [0,+\infty]$$
(the subscript $\mea$ stands for \emph{measure}).
For every $n\geq 0$, the measure chain module
$\mathcal{C}_n(X)$
is the real vector space of the Borel measures on $S_n(X)$
having finite total variation and admitting a compact determination set.
The graded module $\mathcal{C}_\ast (X)$ can be given the structure
of a complex via the boundary operator
$$
\begin{array}{rccc}
\partial_{n}\colon &\calC_{n}(X)&\longrightarrow &\calC_{n-1}(X)\\
&\mu &\longmapsto &\sum_{j=0}^{n}(-1)^j\mu^j\ , 
\end{array}
$$
where $\mu^j$
is the push-forward of $\mu$ 
under the map that takes a simplex $\sigma\in S_n(X)$ into the composition of 
$\sigma$ with the usual inclusion
of the standard $(n-1)$-simplex onto the $j$-th face of $\sigma$.

Let now $W$ be a (possibly empty) subspace of $X$. 
It is proved in~\cite[Proposition 1.10]{Zastrow} that the $\sigma$-algebra $\matB_n(W)$
of Borel subsets of $S_n(W)$ coincides with the set
$\{A\cap S_n(W)\;|\;A\in \matB_n(X)\}$.
For every
$\mu\in \mathcal{C}_n(W)$, the assignement
$$\nu(A)=\mu(A\cap S_n(W)),\qquad A \in \matB_n(X)\ ,
$$
defines a Borel measure on $S_n (X)$, which is called
the \emph{extension} of $\mu$.
If $\mu$ has compact determination set and
finite total variation then the same is true for $\nu$,
so that we have a natural inclusion $\calC_n(W)\hookrightarrow \calC_n(X)$
(see \cite[Proposition 1.10 and Lemma 1.11]{Zastrow} for full details). 
The image of $\calC_n(W)$ in $\calC_n(X)$ will be simply denoted by $\calC_n(W)$,
and coincides with the set of the elements of $\calC_n(X)$ which admit a 
compact determination
set contained in $\matB_n(W)$. 
We denote by $\calC_n(X,W)$
the quotient $\calC_n(X)/\calC_n(W)$. 

It is readily seen that 
$\partial_n(\calC_n(W))\subseteq \calC_{n-1}(W)$, so
$\partial_n$ induces 
a boundary operator $\calC_n(X,W)\to\calC_{n-1}(X,W)$, which will still 
be denoted by $\partial_n$.  
The homology of the complex $(\calC_\ast(X,W),\partial_\ast)$ is the \emph{relative
measure homology of the pair $(X,W)$}, 
and it is denoted by $\mathcal{H}_\ast(X,W)$.

Just as in the case of singular homology, we may endow $\calH_n(X,W)$ with a seminorm as
follows.
For every $\alpha\in \calC_n(X,W)$ we set 
$$
\|\alpha\|_\mea =\inf \left\{\|\mu\|_\mea ,\, {\rm where}\ \mu\in\calC_n (X),\
[\mu]=\alpha
\ {\rm in}\ \calC_n(X,W)=\calC_n(X)/\calC_n (W) \right\}.
$$ 
Then, for every $[\alpha]\in \calH_n(X,W)$ we set
$$\|[\alpha]\|_\mh =\inf\{\|\beta\|_\mea \;|\;\beta\in\calC_n(X,W),\;\partial_n\beta=0,\;[\beta]=[\alpha]\}$$
(the subscript $\mh$ stands for \emph{measure homology}).
The absolute measure homology module $\calH_n(X)$ can be defined
just by setting $\calH_n(X)=\calH_n(X,\emptyset)$.


\subsection{Relative singular homology v.s.~relative measure homology}
For every $\sigma\in S_n(X)$ let us denote by $\delta_\sigma$ the atomic
measure supported by the singleton $\{\sigma\}\subseteq S_n(X)$. The chain
map
$$
\begin{array}{rccl}
\iota_\ast\colon &C_\ast(X,W)&\longrightarrow& \calC_\ast(X,W)\\
& \sum_{i=0}^ka_i\sigma_i&\longmapsto &\sum_{i=0}^k a_i\delta_{\sigma_i},
\end{array}
$$
induces a map
$$H_n(\iota_\ast)\colon H_n(X,W)\longrightarrow \calH_n(X,W),\quad n\in\mathbb{N}\ ,
$$
which is obviously norm non-increasing for every $n\in\mathbb{N}$.

The following result is proved in~\cite{Zastrow, Hansen}:

\begin{teo}[\cite{Zastrow, Hansen}]\label{teoza}
Let $(X,W)$ be a CW-pair.
For every $n\in\mathbb{N}$, the map
$$
H_n(\iota_\ast)\colon H_n(X,W)\longrightarrow \calH_n(X,W)
$$
is an isomorphism.
\end{teo}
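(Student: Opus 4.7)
My plan is to deduce the relative statement from the absolute Zastrow--Hansen theorem by a Five Lemma argument applied to the long exact sequences of the pair $(X,W)$ in both singular and measure homology. The chain map $\iota_\ast$ is manifestly natural with respect to inclusions of subspaces (it simply sends each singular simplex $\sigma$ to the atomic measure $\delta_\sigma$), so it will intertwine these two long exact sequences.

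First I would verify that measure homology admits a long exact sequence of the pair. By the very definition of $\calC_n(X,W)$ as the quotient $\calC_n(X)/\calC_n(W)$, together with the injectivity of the extension map $\calC_n(W)\hookrightarrow \calC_n(X)$ recalled just before the statement, one obtains a short exact sequence of chain complexes
$$0\longrightarrow \calC_\ast(W)\longrightarrow \calC_\ast(X)\longrightarrow \calC_\ast(X,W)\longrightarrow 0.$$
The standard zigzag lemma in homological algebra then produces a long exact sequence relating $\calH_n(W)$, $\calH_n(X)$ and $\calH_n(X,W)$. The analogous sequence for singular homology is classical, and since $\iota_\ast$ is a chain map compatible with the inclusion $W\subseteq X$, it induces a morphism between these two long exact sequences.

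Next, I would invoke the absolute Zastrow--Hansen theorem, which asserts that $H_n(\iota_\ast)\colon H_n(Y)\to \calH_n(Y)$ is an isomorphism whenever $Y$ is a CW-complex. Since $(X,W)$ is a CW-pair, $W$ is itself a CW-complex, so the absolute result applies both to $X$ and to $W$. The Five Lemma, applied to the commutative ladder whose rows are the long exact sequences described above and whose four vertical arrows attached to $X$ and $W$ are already known to be isomorphisms, then forces $H_n(\iota_\ast)\colon H_n(X,W)\to \calH_n(X,W)$ to be an isomorphism in every degree.

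The only potentially delicate point in this scheme is the construction of the short exact sequence of measure chains: one needs the extension operation $\calC_n(W)\to \calC_n(X)$ to be genuinely injective and to have image equal to the set of $\mu\in \calC_n(X)$ admitting a compact determination set contained in $\matB_n(W)$. This is non-trivial because the inclusion $S_n(W)\hookrightarrow S_n(X)$ need not be topologically well-behaved, but the required measure-theoretic facts are exactly what is supplied by \cite[Proposition 1.10 and Lemma 1.11]{Zastrow}. Once these are in hand, the relative case reduces painlessly to the absolute one.
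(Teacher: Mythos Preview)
Your argument is correct. Note, however, that the paper does not itself prove this theorem: it is quoted as a result of Zastrow and Hansen, and the paper only describes their method, namely that relative measure homology satisfies the Eilenberg--Steenrod axioms on CW-pairs, whence the isomorphism follows from the uniqueness theorem for ordinary homology theories.

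Your approach is a legitimate shortcut once the absolute case is granted: you use only the exactness axiom for measure homology (the short exact sequence of measure chain complexes, which you correctly source to \cite[Proposition~1.10 and Lemma~1.11]{Zastrow}) together with the Five Lemma. This is more economical than rechecking all the axioms, but of course it presupposes the absolute isomorphism $H_n(Y)\cong\calH_n(Y)$ for every CW-complex $Y$, whose proof already requires the bulk of the Eilenberg--Steenrod verification (homotopy invariance, excision, dimension). So the two routes are closely related: yours factors the relative statement through the absolute one plus exactness, whereas Zastrow--Hansen establish all the axioms at once and invoke uniqueness directly. Either way the substantive analytic input is the same; your reduction simply packages it differently.
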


Zastrow's and Hansen's proofs of Theorem~\ref{teoza} are based on the
fact that 
relative measure homology satisfies the Eilenberg-Steenrod axioms
for homology (on suitable categories of topological pairs). Therefore,
their approach avoids the explicit construction of the inverse maps
$H_n(\iota_\ast)^{-1}$, $n\in\mathbb{N}$, and does not give 
much information about the behaviour of such inverse maps
with respect to the seminorms introduced above. 
In the case when $W=\emptyset$, the fact that $H_n(\iota_\ast)$ is indeed an isometry was
proved by  L\"oh:

\begin{teo}[\cite{Loh}]\label{loh:teo}
 If $X$ is any connected CW-complex, then for every $n\in\matN$ the map
$$
H_n(\iota_\ast)\colon H_n(X)\to\calH_n(X)
$$
is an isometric isomorphism.
\end{teo}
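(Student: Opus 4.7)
The plan is to deduce the isometry from the isomorphism already supplied by Theorem~\ref{teoza}. Since $\iota_n$ sends each singular simplex $\sigma$ to the atomic measure $\delta_\sigma$, which has total variation~$1$, the chain map $\iota_\ast$ is norm non-increasing, so
\[
\|H_n(\iota_\ast)([\alpha])\|_\mh \;\le\; \|[\alpha]\|_1
\]
for every $[\alpha]\in H_n(X)$. All the content of the theorem is therefore concentrated in the opposite inequality, and the strategy is to extract it from bounded cohomology by duality.

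More concretely, Gromov's duality principle identifies the $L^1$-seminorm on $H_n(X)$ with the dual of the $L^\infty$-seminorm on the bounded cohomology group $H^n_b(X)$, the latter being computed by the complex $C^\ast_b(X)$ of bounded cochains. On the measure side, integration of bounded continuous functions $f\colon S_n(X)\to\matR$ against measures in $\calC_n(X)$ realises a ``bounded continuous cochain complex'' $\calC^\ast_c(X)$ whose sup-seminorm on cohomology is dual to $\|\cdot\|_\mh$ on $\calH_n(X)$; this relies only on the fact that the total variation of a compactly determined measure equals its norm as a functional on such $f$, which follows from a standard regularity argument since $S_n(X)$ with the compact-open topology is metrisable for reasonable $X$. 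With both duality statements in hand, the desired isometry is equivalent to the assertion that the transposed cochain map
\[
\iota^\ast\colon \calC^\ast_c(X) \longrightarrow C^\ast_b(X)
\]
induces an isometric isomorphism on cohomology.

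To prove this I would pass to the universal cover $\wdtX$, where both $\calC^\ast_c(\wdtX)$ and $C^\ast_b(\wdtX)$ are complexes of Banach $\pi_1(X)$-modules augmented by the trivial module $\matR$, and invoke Ivanov's homological algebra for bounded cohomology. The goal is to show that each of these complexes is a strong relatively injective resolution of $\matR$; Ivanov's uniqueness theorem then identifies the cohomologies of their $\pi_1(X)$-invariant subcomplexes canonically and isometrically with $H^\ast_b(X)$, and the map induced by $\iota^\ast$ realises this identification.

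I expect the main obstacle to lie in establishing the resolution property for $\calC^\ast_c(\wdtX)$. For the ordinary complex $C^\ast_b(\wdtX)$ a standard contracting homotopy is provided by coning at a basepoint, and it is visibly equivariant and norm non-increasing. In the continuous setting one has to check that the analogous cones preserve continuity with respect to the compact-open topology on simplex spaces, and that the splittings required by relative injectivity can be chosen continuously and equivariantly without enlarging norms. Once these geometric checks are complete, combining them with Ivanov's theorem and the two duality statements yields the missing inequality $\|[\alpha]\|_1 \le \|H_n(\iota_\ast)([\alpha])\|_\mh$ and closes the argument.
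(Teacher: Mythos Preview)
Your overall architecture is correct and is exactly the route taken by L\"oh and followed here in the relative setting: reduce the isometry to a statement in bounded cohomology via duality, then prove that statement using Ivanov's homological-algebra characterisation of $H^*_b(G)$. Two points need correction, one minor and one substantial.

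The minor one: you assert full duality on the measure side, i.e.\ that $\|\cdot\|_\mh$ equals the sup over $H^n_{cb}$. Your justification (``total variation equals operator norm on continuous bounded functions'') is a chain-level statement and does not transfer to homology by itself, because $C^*_{cb}(X)$ is not the whole topological dual of $\calC_*(X)$ and the Hahn--Banach extension may fail to land in $C^*_{cb}$. Fortunately only the inequality $\|\alpha\|_\mh \ge \sup\{1/\|\varphi_c\|_\infty : \langle\varphi_c,\alpha\rangle=1\}$ is needed (this is Proposition~\ref{duality:measure}), and that is immediate from $|\langle f,\mu\rangle|\le\|f\|_\infty\|\mu\|_\mea$.

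The substantial one: you propose to show directly that $C^*_{cb}(\wdtX)$ is a strong relatively injective $G$-resolution, citing ``coning at a basepoint'' for the contracting homotopy. Coning singular simplices at a point makes sense only when $\wdtX$ is contractible; for a general connected CW-complex $X$ the universal cover need not be, and Ivanov's contracting homotopy for $C^*_b(\wdtX)$ is built by averaging over a Postnikov tower (see Subsection~\ref{ivanov:sub}). There is no evident reason that this averaging preserves continuity of cochains, and the paper explicitly flags this obstacle in Remark~\ref{noexact}. (Incidentally, a contracting homotopy is not required to be $G$-equivariant, and Ivanov's is not.)

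The actual proof bypasses $C^*_{cb}$ by inserting an intermediate complex: the continuous bounded \emph{straight} cochains $C^*_{cbs}(\wdtX)$, i.e.\ continuous bounded functions on $\wdtX^{n+1}$. For this complex the cone homotopy $t^n(f)(x_1,\dots,x_n)=f(b_0,x_1,\dots,x_n)$ is well-defined regardless of the homotopy type of $\wdtX$, relative injectivity is Monod's theorem, and the map $f\mapsto(\sigma\mapsto f(\sigma(e_0),\dots,\sigma(e_n)))$ factors as $C^*_{cbs}(\wdtX)\to C^*_{cb}(\wdtX)\hookrightarrow C^*_b(\wdtX)$, each arrow norm non-increasing. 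Once the composite is shown to induce an isometric isomorphism $H^*_{cbs}\to H^*_b$ (this is \cite[Theorem~1.2]{Frigerio}, and the absolute case of Theorem~\ref{isometric:iso} here), the first arrow yields an isometric right inverse to $H^*(\rho_b^*)$, which is all the duality argument needs.
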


L\"oh's proof of Theorem~\ref{loh:teo}
exploits some deep results about the \emph{bounded cohomology}
of groups and topological spaces. In Sections~\ref{boundedco:sec}, \ref{boundedco2:sec} we develop
a suitable relative version of such results, which are exploited
in Subsection~\ref{teo1proof:sub} for proving the following:

\begin{teo}\label{teo1}
Let $(X,W)$ be a CW-pair, and let us suppose that the following conditions hold:
\begin{enumerate}
\item
$X$ is countable, and both $X$ and $W$ are connected;
\item
the map $\pi_j (W)\to \pi_j (X)$ induced by
the inclusion $W\hookrightarrow X$ is injective for $j=1$, and it is an isomorphism
for $j\geq 2$.
\end{enumerate}
 Then, for every $n\in\matN$ the isomorphism
$$
H_n(\iota_*)\colon H_n(X,W)\to\calH_n(X,W)
$$
is isometric.
\end{teo}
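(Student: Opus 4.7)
The natural approach is via duality. By Hahn--Banach, the seminorm on $H_n(X,W)$ is dual to the $L^\infty$-seminorm on the ordinary bounded cohomology $H_b^n(X,W)$, while the seminorm $\|\cdot\|_\mh$ on $\calH_n(X,W)$ is dual to the $L^\infty$-seminorm on the \emph{continuous} bounded cohomology $H_{cb}^n(X,W)$, where the pairing between a continuous bounded cochain $f$ and a measure chain $\mu$ is integration $\mu\mapsto \int f\, d\mu$ (which is well-defined because $\mu$ has finite total variation and a compact determination set). The tautological inclusion $C_{cb}^*(X,W)\hookrightarrow C_b^*(X,W)$ induces a comparison map $H_{cb}^n(X,W)\to H_b^n(X,W)$ whose Kronecker dual, transported through the above pairings, coincides with $H_n(\iota_*)$. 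Since Theorem~\ref{teoza} already gives that $H_n(\iota_*)$ is bijective, it suffices to prove that the comparison map is an isometric isomorphism of seminormed spaces.

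The next step is to reduce both sides of the comparison to the bounded cohomology of the pair of fundamental groups $(\pi_1(X),\pi_1(W))$. The hypotheses of the theorem are designed exactly for this: connectedness together with $\pi_1$-injectivity and the $\pi_j$-isomorphisms for $j\ge 2$ ensure that the inclusion $W\hookrightarrow X$ lifts to a well-behaved inclusion of universal covers, and countability of $X$ makes the relevant resolutions tractable. Using the relative version of Ivanov's homological-algebra machinery developed in Sections~\ref{boundedco:sec} and~\ref{boundedco2:sec}, I expect to obtain an \emph{isometric} identification
\[
H_{cb}^n(X,W)\ \cong\ H_b^n(\pi_1(X),\pi_1(W))
\]
with respect to Gromov's $\ell^\infty$-seminorm on the right-hand side, and similarly an isometric identification of $H_b^n(X,W)$ with the same object. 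Because both resolutions sit inside one ambient complex and are connected by a natural morphism inducing the comparison map, these identifications are compatible with it, so the comparison map is an isometric isomorphism.

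Once this is in place the theorem follows by dualisation. On the one hand $H_n(\iota_*)$ is norm non-increasing for free; on the other hand, given $[\alpha]\in H_n(X,W)$, Hahn--Banach provides a class in $H_b^n(X,W)$ witnessing $\|[\alpha]\|_1$, and via the isometric comparison this class lifts to a continuous bounded cohomology class of the same $\ell^\infty$-seminorm, which pairs with $H_n(\iota_*)([\alpha])$ to the same value. This forces $\|H_n(\iota_*)([\alpha])\|_\mh\ge \|[\alpha]\|_1$, closing the circle.

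The principal difficulty is the \emph{isometry} (as opposed to mere biLipschitz equivalence) in the comparison. Park's approach in~\cite{Park} yields only the biLipschitz estimate of Theorem~\ref{biLipschitz}, and Section~\ref{park:sec} shows that Park's seminorm is genuinely different from Gromov's, so one cannot simply invoke his framework. What is required instead is a relative analogue of Ivanov's construction producing a \emph{strong relatively injective} resolution of the pair $(\pi_1(X),\pi_1(W))$ by continuous bounded cochains on universal covers, norming in the precise sense that no seminorm is lost in the identification with group cochains. The $\pi_j$-hypotheses and the countability assumption are the structural ingredients that make this resolution exist and behave functorially; carrying this through carefully is the main technical obstacle and is where the bulk of the intervening sections of the paper must do their work.
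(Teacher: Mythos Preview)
Your overall architecture is exactly the paper's: use the easy direction of duality on the measure side, full Hahn--Banach duality on the singular side, and a comparison between $H^n_{cb}(X,W)$ and $H^n_b(X,W)$ to close the loop. Your third paragraph is essentially the computation in Subsection~\ref{teo1proof:sub}.

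There are, however, two overstatements, and the second one hides a real technical gap. First, you assert that $\|\cdot\|_\mh$ is \emph{dual} to $H^n_{cb}(X,W)$ via Hahn--Banach. This is not what is available: the topological dual of $\calC_n(X,W)$ is strictly larger than $C^n_{cb}(X,W)$, so Hahn--Banach does not hand you a continuous bounded cocycle. The paper explicitly settles for the easy inequality (Proposition~\ref{duality:measure}), and in fact your third paragraph only uses that inequality, so this is a matter of phrasing rather than substance.

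Second, and more seriously, you claim that the comparison map $H^*(\rho_b^*)\colon H^n_{cb}(X,W)\to H^n_b(X,W)$ is an \emph{isometric isomorphism}, obtained by identifying both sides with $H^n_b(\pi_1(X),\pi_1(W))$. The paper does not prove this, and Remark~\ref{noexact} explains why your proposed route fails: it is not known that $C^*_{cb}(\wdtX)$, $C^*_{cb}(\wdtW)$ form a strong (or even exact) pair of resolutions, because Ivanov's contracting homotopy is built from averaging over Postnikov fibres and there is no reason it should preserve continuity of cochains. What the paper actually proves (Theorem~\ref{teo2}) is only that $H^*(\rho_b^*)$ admits an isometric right inverse; this is obtained by passing through the auxiliary complex of continuous bounded \emph{straight} cochains $C^*_{cbs}(\wdtX,\wdtW)$ (cochains depending only on vertices), for which the resolution machinery of Section~\ref{boundedco:sec} does apply, and then factoring $\eta^*_{G,A}=\rho_b^*\circ\nu^*_{G,A}$ through $H^*_{cb}$. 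Fortunately this weaker statement is exactly what your third paragraph needs: to lift a witnessing class from $H^n_b$ back to $H^n_{cb}$ without increasing the seminorm. So your duality argument survives, but the intermediate step must be reworked to go through $H^*_{cbs}$ rather than attempting a direct resolution by $C^*_{cb}$.
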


In fact, we will deduce Theorem~\ref{teo1} from Theorem~\ref{teo2}
below concerning the relationships between continuous (bounded) cohomology
and singular (bounded) cohomology of topological pairs.

\begin{defn}
A CW-pair $(X,W)$ is \emph{good} if it satisfies conditions~(1) and~(2) of the statement of Theorem~\ref{teo1}.
\end{defn}

We conjecture that Theorem~\ref{teo1} holds even without the hypothesis
that the pair $(X,W)$ is good, so a brief comment about the places where this assumption
comes into play is in order. The fact that $W$ is connected and $\pi_1$-injective
in $X$ allows us to exploit
results regarding the bounded cohomology of a pair $(G,A)$, where $G$ is a group
and $A$ is a subgroup of $G$. In order to deal with the case when $W$ is \emph{not} assumed to be $\pi_1$-injective, one could probably  
build on results regarding the bounded cohomology of a pair $(G,A)$, where $A,G$ are groups
and 
$\varphi\colon A\to G$ is a homomorphism of $A$ into $G$. 
This case is treated \emph{e.g.}~in~\cite{Park} by means of a mapping cone construction. 
However, the mapping
cone introduced in~\cite{Park} does not admit a norm inducing Gromov's seminorm
in bounded cohomology, so Park's approach seems to be of no help to our purposes.
Perhaps it is easier to drop from the hypotheses of Theorem~\ref{teo1}
the requirement that $W$ be connected (provided that we still assume that every component
of $W$ is $\pi_1$-injective in $X$). Several arguments in our proofs
make use of cone constructions which are based on the choice of a basepoint
in the universal coverings $\wdtX$, $\wdtW$ of $X$, $W$. When $W$ is connected (and $\pi_1$-injective in $X$),  
the space $\wdtW$ is realized as a connected subset of $\wdtX$, and this allows us to
define compatible 
cone constructions on $\wdtX$ and $\wdtW$. It is not clear
how to replace these constructions
when $W$ is disconnected:
one could
probably build on the theory of homology and cohomology of a group
with respect to any system of its subgroups, as described for instance
in~\cite{Bieri} (see also~\cite{Yaman}), but several difficulties arise
which we have not been able to overcome.
Finally, the assumption that $\pi_i(W)$ is isomorphic to $\pi_i(X)$ for every $i\geq 2$ plays a fundamental r\^ole in our proof of Proposition~\ref{standardok} below. 
On could get rid of this assumption by
using a result
stated without proof in~\cite[Lemma 4.2]{Park}, but at the moment we are not
able to provide a proof for Park's statement
(see Remark~\ref{Park:rem} for a brief discussion of this issue).

\subsection{Locally convex pairs}\label{LCP}
We are able to prove that measure homology is isometric to singular homology
also for a large family of pairs of metric spaces, 
namely for those pairs which support a \emph{relative straightening} for simplices.

The \emph{straightening procedure} for simplices was introduced by Thurston in~\cite{Thurston},
and establishes an isometric isomorphism between the usual singular homology of a space
and the homology of the complex of \emph{straight} chains. Such a procedure was originally defined on hyperbolic manifolds, and has then been extended to the context of 
non-positively curved Riemannian manifolds. 
In Section~\ref{convex:sec} we give the precise definition of \emph{locally convex pair
of metric spaces}.
Then, following some ideas described in~\cite{Loh-Sauer}, for every 
locally convex pair $(X,W)$ we define a 
straightening procedure
which induces a chain map between 
relative measure chains and relative singular chains. It turns out that
such a straightening 
induces a well-defined norm non-increasing map 
$\calH_n(X,W)\to H_n(X,W)$.
This map provides the desired norm non-increasing inverse of 
$H_n(\iota_\ast)$, so that we can prove (in Subsection~\ref{proofconvex:subsec})
the following:

\begin{teo}\label{teo3}
Let $(X,W)$ be a locally convex pair of metric spaces.
Then the map
$$H_n(\iota_\ast)\colon H_n(X,W)\longrightarrow \calH_n(X,W)
$$
is an isometric isomorphism for every $n\in\matN$.
\end{teo}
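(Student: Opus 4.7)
The plan is to construct a norm non-increasing map $\theta_n\colon \calH_n(X,W) \to H_n(X,W)$ providing a two-sided inverse to $H_n(\iota_\ast)$. Since $H_n(\iota_\ast)$ is itself norm non-increasing (immediate from the definition of $\iota_\ast$: a Dirac measure has total variation $1$, and linearity/triangle inequality yield $\|\iota_\ast c\|_{\mea}\leq\|c\|_1$), the existence of such an inverse forces both maps to be isometries.

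The first step is to use the relative straightening introduced in Section~\ref{convex:sec}. Because $(X,W)$ is locally convex, every singular simplex in $X$ can be geodesically straightened to the simplex $[v_0,\dots,v_n]$ determined by its ordered vertices (working in $\wdtX$ and pushing down); by local convexity of $W$ inside $X$, simplices with image in $W$ straighten to simplices with image in $W$. Extending by pushforward of measures on the one hand, and linearly on the other, one obtains chain maps $\str\colon\calC_\ast(X)\to\calC_\ast(X)$ and $\str\colon C_\ast(X)\to C_\ast(X)$, both preserving the subcomplexes associated to $W$ and hence descending to the relative chain complexes. A straight-line homotopy carried out in $\wdtX$ supplies chain homotopies between $\str$ and the identity that also respect the subcomplexes for $W$; applied to measure chains this homotopy is built from the pushforward of Borel measures under continuous maps, and so stays inside $\calC_\ast(\cdot)$.

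The second step is to pass from straight measure chains back to genuine singular chains. The crucial observation is that in a locally convex space a straight simplex is uniquely determined by its $(n+1)$-tuple of vertices, so the space of straight simplices is a continuous image of $X^{n+1}$ and inherits a finite-dimensional parameterization. Following the approach sketched in~\cite{Loh-Sauer}, for a straight measure cycle $\mu\in\calC_n(X,W)$ with $[\partial\mu]=0$ one approximates $\mu$ by finite combinations $\sum a_i\delta_{\sigma_i}$ of Dirac measures supported on straight simplices, with $\sum|a_i|\leq\|\mu\|_{\mea}+\varepsilon$, choosing the approximation to lie in $S_n(W)$ on the part of the determination set contained in $S_n(W)$. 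The straightening chain homotopy, together with a standard limiting/diagonal argument over $\varepsilon\to 0$, then yields a well-defined homology class $\theta_n([\mu])\in H_n(X,W)$ of seminorm at most $\|[\mu]\|_{\mh}$.

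The main obstacle is showing that $\theta_n$ is well defined and compatible with the relative structure. Three checks are required: (i) two different approximations of the same straight measure cycle produce homologous singular cycles; (ii) homologous straight measure cycles give homologous singular approximations; and (iii) the entire construction is compatible with the subcomplex for $W$, so that the relative classes produced are consistent. All three reduce, via the straightening homotopy, to controlling chain homotopies on both measure and singular sides simultaneously in $X$ and in $W$, which is where local convexity of $W$ in $X$ is indispensable. Once $\theta_n$ is shown to be well defined, the equalities $\theta_n\circ H_n(\iota_\ast)=\id$ and $H_n(\iota_\ast)\circ\theta_n=\id$ on homology follow from the fact that $\str$ is chain-homotopic to the identity on both complexes, and combining this with the norm bound $\|\theta_n([\mu])\|_1\leq\|[\mu]\|_{\mh}$ completes the proof.
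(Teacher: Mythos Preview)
Your overall strategy --- construct a norm non-increasing map from measure chains back to singular chains whose composition with $\iota_\ast$ is the straightening, hence chain-homotopic to the identity --- is exactly the paper's strategy. But your second step contains a real gap, and the vague appeal to approximation and a ``limiting/diagonal argument'' is hiding the actual content of the proof.

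The problem is this: straightening to the simplex on the \emph{actual} vertices leaves you with a measure supported on a continuum of straight simplices, and there is no way to approximate a non-atomic Borel measure by Dirac combinations in total variation. You can certainly choose a finite Borel partition $\{A_i\}$ of the determination set, pick $\sigma_i\in A_i$, and form $\sum\mu(A_i)\delta_{\sigma_i}$ with $\sum|\mu(A_i)|\leq\|\mu\|_{\mea}$; but this is not close to $\mu$ in total variation, and an arbitrary such discretization is neither a cycle nor in any evident sense homologous to $\mu$. Your checks (i)--(iii) are precisely the heart of the matter, and ``reduce via the straightening homotopy'' does not resolve them: the straightening homotopy relates $\mu$ to its straightened version, not to an arbitrary Dirac approximation of the latter. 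No limiting process in $C_n(X,W)$ will rescue this, since that space carries no useful topology in which finite chains converge to something representing the same homology class.

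What the paper does instead is to discretize at the level of \emph{vertices}, by first fixing a \emph{net}: a $\Gamma$-equivariant locally finite Borel partition $\{\widetilde B_x\}_{x\in\lamtil}$ of $\wdtX$, compatible with $\wdtW$ (Definition~\ref{net:def}, Lemma~\ref{net:lemma}). The straightening $\str_n$ then sends a simplex to the straight simplex on the net points determined by which cells its vertices lie in. Because the net is locally finite, $\str_n$ has discrete image and the preimages $\str_n^{-1}(\sigma)$ form a locally finite family in $S_n(X)$ (Lemma~\ref{finite:lem}); hence for any $\mu$ with compact determination set the sum $\theta_n(\mu)=\sum_\sigma\mu(\str_n^{-1}(\sigma))\,\sigma$ is finite. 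Discretizing vertex-by-vertex is what makes $\theta_\ast$ automatically a \emph{chain map} (face maps commute with the vertex assignment), makes it send $\calC_\ast(W)$ into $C_\ast(W)$, and gives $\theta_\ast\circ\iota_\ast=\str_\ast$ on the nose. No approximation, no limits, no $\varepsilon$: well-definedness on homology and the one-sided inverse property both follow immediately from $\str_\ast\simeq\id$. (Note also that a left inverse suffices, since $H_n(\iota_\ast)$ is already an isomorphism by~\cite{Zastrow,Hansen}.)
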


The class of locally convex pairs is indeed quite large, including
for example all the pairs $(M,\partial M)$, where $M$ is a non-positively curved
complete Riemannian manifold with geodesic boundary $\partial M$.

\begin{rem}
Suppose that $(X,W)$ is a locally convex pair, and 
let $K$ be a connected component of $W$. An easy application
of a metric version of
Cartan-Hadamard Theorem (see \emph{e.g.}~\cite[II.4.1]{Bridson})
shows that $\pi_1(K)$ injects into $\pi_1(X)$, and $\pi_i(K)=\pi_i(X)=0$
for every $i\geq 2$. In particular, if
$(X,W)$ is also a countable CW-pair and $W$ is connected, then $(X,W)$ is good,
and the conclusion of Theorem~\ref{teo3} also descends from Theorem~\ref{teo1}.
Note however that the request that $W$ be connected could be quite restrictive 
in several applications of our results. For example, 
it is well-known that the 
natural compactification 
of a complete finite-volume hyperbolic manifold with
geodesic boundary and/or cusps is a manifold with boundary $N$
admitting a locally CAT(0) (whence locally convex) metric that turns
the pair $(N,\partial N)$ into a locally convex pair (see \emph{e.g.}~\cite[pages 362-366]{Bridson}).
We have discussed in~\cite{FriPag} some properties of the simplicial volume of
such manifolds, and in that context several interesting examples
have in fact disconnected boundary. 
In~\cite{Pagliantini} it is shown how to apply Theorem~\ref{teo3}
for getting
shorter proofs of the main results of~\cite{FriPag}.
\end{rem}

\subsection{(Continuous) relative bounded cohomology}
As mentioned above, our proof of Theorem~\ref{teo1} 
involves the study of 
the relative bounded cohomology of topological pairs. Introduced by Gromov in~\cite{Gromov},
the relative bounded cohomology of pairs (of groups or spaces) seems
to be less clearly understood than absolute bounded cohomology. 
Here below we define the \emph{continuous}
(bounded) cohomology of topological pairs, and we put on (continuous) bounded cohomology
Gromov's $L^\infty$-seminorm which is ``dual'' (in a sense to be specified below) to
the seminorm on (measure) homology described above. 
Then, in Section~\ref{boundedco2:sec} we 
compare the continuous bounded cohomology of a  
good CW-pair to its usual singular bounded cohomology
(see Theorem~\ref{teo2} below). In Section~\ref{duality:sec} we show how this result implies
Theorem~\ref{teo1}. 

Let us now state more precisely our results. 
For every $n\in \matN$ we denote by $C^n(X)$ (resp.~$C^n(X,W)$)
the module of singular $n$-cochains with real coefficients, 
\emph{i.e.}~the algebraic dual of $C_n(X)$ (resp.~of $C_n(X,W)$).
We will often identify $C^n(X,W)$ with a submodule of $C^n(X)$ via the 
canonical isomorphism
$$C^n(X,W)\cong\{f\in C^n(X)\;|\;{f}|_{C_n(W)=0}\}\ .$$
If $\delta^\ast\colon C^\ast(X,W)\rightarrow C^{\ast+1}(X,W)$ is the usual differential,
the homology of the complex $(C^\ast(X,W),\delta^\ast)$ is the relative
singular cohomology of the pair $(X,W)$, and it is denoted by $H^\ast(X,W)$.

We regard $S_n(X)$ as a subset of $C_n(X)$, so that for every cochain
$\varphi\in C^n(X,W)\subseteq C^n(X)$ it makes sense to consider
the restriction $\varphi|_{S_n(X)}$. In particular, we say that
$\varphi$ is \emph{continuous} if $\varphi|_{S_n(X)}$ is (recall
that $S_n(X)$ is endowed with the compact-open topology).
 If we set 
$$
C_c^\ast (X,W)=\{\varphi\in C^\ast(X,W)\, |\, \varphi\ {\rm is\ continuous} \}\ ,
$$
then it is readily seen that $\delta^n (C_c^n(X,W))\subseteq C_c^{n+1}(X,W)$,
so $C_c^\ast (X,W)$ is a subcomplex of $C^\ast (X,W)$, whose homology
is denoted by $H_c^\ast (X,W)$.

Let us now come to the definition of (continuous) bounded cohomology.
We endow $C^n(X,W)$ with 
the $L^\infty$-norm defined by
$$\|f\|_\infty=\sup_{\sigma\in S_n(X)}|f(\sigma)|\ \in [0,\infty],\qquad f\in C^n(X,W)\ .$$

Let us introduce the following submodules of $C^\ast (X,W)$:
$$C^\ast_b(X,W)=\{f\in C^\ast(X,W)\;|\;\|f\|_{\infty}<\infty\}\ ,$$
$$C^\ast_{cb}(X,W)=C^\ast_b(X,W)\cap C^\ast_c(X,W)\ .$$

The coboundary map $\delta^n$ is bounded, so $C_b^\ast(X,W)$ (resp.~$C_{cb}^\ast (X,W)$) is a 
subcomplex of $C^\ast(X,W)$ (resp.~of $C_{c}^\ast (X,W)$).
Its homology is denoted by $H_b^\ast(X,W)$ (resp.~$H_{cb}^\ast(X,W)$),
and it is called 
the \emph{bounded cohomology} (resp.~\emph{continuous bounded cohomology}) of $(X,W)$.
The $L^\infty$-norm on $C^\ast (X,W)$ descends (after suitable restrictions)
to a seminorm on each of the modules $H^\ast (X,W)$,
$H_c^\ast (X,W)$, $H_{b}^\ast(X,W)$, $H_{cb}^\ast(X,W)$.
These seminorms will still 
be denoted by $\|\cdot \|_\infty$.
The inclusion maps 
$$   \rho_b^*\colon C_{cb}^*(X,W)\hookrightarrow C_b^*(X,W), \qquad 
\rho^*\colon C_c^*(X,W)\hookrightarrow C^*(X,W)
$$
induce maps 
$$H^*(\rho_b^\ast)\colon H_{cb}^*(X,W)\longrightarrow H_{b}^*(X,W)\ ,\qquad
H^*(\rho^\ast)\colon H^*_c(X,W)\longrightarrow H^*(X,W)\ , $$ 
that are a priori neither injective nor surjective.

We are now ready to state our main result about (continuous) bounded cohomology
of pairs, which is proved in Subsection~\ref{mainproof:sub}:
\begin{teo}\label{teo2}
Let $(X,W)$ be a good CW-pair. 
Then the map
$$H^n(\rho_b^\ast)\colon H_{cb}^n(X,W)\longrightarrow H_{b}^n(X,W)$$
admits a right inverse which is an isometric embedding (in particular,
$H^*(\rho_b^\ast)$ is surjective) for every $n \in \matN$. 
\end{teo}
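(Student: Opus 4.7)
The plan is to reduce this topological statement to an algebraic comparison of two resolutions of the trivial $G$-module $\matR$ relative to a subgroup $A$, where $G=\pi_1(X)$ and $A=\pi_1(W)$; note that $A$ embeds in $G$ by the goodness hypothesis. The strategy is to prove that both $H_b^*(X,W)$ and $H_{cb}^*(X,W)$ are canonically and \emph{isometrically} isomorphic to a common algebraic object, namely the bounded cohomology $H_b^*(G,A)$ of the pair of groups computed via the standard homogeneous (bar) resolution introduced in Section~\ref{boundedco:sec}. Once both identifications are in place, the inclusion-induced map $H^n(\rho_b^*)$ becomes the identity in the algebraic model, and composing the isomorphisms in reverse order provides the desired isometric right inverse.

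For the first identification, I would use that the complex $C_b^*(\wdtX,\wdtW)^G$ of $G$-invariant bounded relative cochains on the universal covers is a strong relatively injective resolution of $\matR$ relative to $A$. By the uniqueness (up to canonical norm non-increasing chain homotopy) of such resolutions---the relative extension of Ivanov's machinery developed in Section~\ref{boundedco:sec}---this yields the isometric isomorphism $H_b^*(X,W)\cong H_b^*(G,A)$. The pieces needed here are the $\pi_1$-injectivity of $W\hookrightarrow X$, which allows $\wdtW$ to be realized as a connected subset of $\wdtX$ so that the relative setting makes algebraic sense.

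For the second identification, I would show that the subcomplex $C_{cb}^*(\wdtX,\wdtW)^G$ of \emph{continuous} invariant bounded relative cochains is \emph{also} a strong relatively injective resolution of $\matR$ relative to $A$. The contracting homotopies and relative injectivity extensions would be obtained from a cone construction based at a compatible pair of basepoints in $\wdtW\subseteq\wdtX$, carefully chosen so that the cone preserves both continuity of cochains and the relative $\wdtW$-condition. The countability assumption on $X$ enters to guarantee the existence of such a cone construction compatible with the CW-structure, and the higher-homotopy-isomorphism hypothesis enters in Proposition~\ref{standardok} below, which is needed to verify that the ensuing resolution of $(G,A)$ genuinely computes $H_b^*(G,A)$. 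A second application of uniqueness then yields the isometric isomorphism $H_{cb}^*(X,W)\cong H_b^*(G,A)$, and composing with the inverse of the first identification produces the required isometric right inverse to $H^n(\rho_b^*)$.

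The main obstacle is the verification that $C_{cb}^*(\wdtX,\wdtW)^G$ is strong relatively injective in the sense required: one must extend partially defined continuous bounded relative cochains across the $G$-action while simultaneously preserving continuity, the bound on the sup-norm, and the vanishing on simplices with image in $\wdtW$. In the absolute case this is handled by a cone-based extension that relies on the contractibility of $\wdtX$; in the relative case the compatibility between the cone constructions on $\wdtX$ and on $\wdtW$ is precisely where the goodness hypothesis on $(X,W)$ is essential. This is also the step that forces us to assume $\pi_j(W)\cong\pi_j(X)$ for $j\geq 2$, rather than merely $\pi_1$-injectivity, since without it the cone on $\wdtW$ cannot be made compatible with the cone on $\wdtX$ in a way that preserves continuity of the resulting extensions.
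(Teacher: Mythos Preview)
Your overall strategy---passing through $H_b^*(G,A)$---is the right one, but the second identification contains a genuine gap. You propose to show that $C_{cb}^*(\wdtX,\wdtW)^G$, the complex of continuous bounded cochains on singular simplices, is itself a strong relatively injective resolution by means of a cone construction at a basepoint. This does not work: a cone operator $S_n(\wdtX)\to S_{n+1}(\wdtX)$ requires $\wdtX$ to be (at least highly) contractible, and a good pair gives you only that $\wdtX$ is simply connected. Ivanov's contracting homotopy for $C_b^*(\wdtX)$ circumvents this by climbing the Postnikov tower of $\wdtX$ and averaging over fibers, but that averaging procedure is not known to preserve continuity of cochains; this is exactly Remark~\ref{noexact}. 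Consequently there is no known proof that $C_{cb}^*$ is a strong resolution, and your argument would, if it worked, give the stronger conclusion that $H^*(\rho_b^*)$ is an isometric \emph{isomorphism}, which the paper does not claim.

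The paper's fix is to interpose a third complex, $C_{cbs}^*(\wdtX,\wdtW)$, of continuous bounded \emph{straight} cochains---bounded continuous functions on $\wdtX^{n+1}$ rather than on $S_n(\wdtX)$. On this complex the cone construction is trivially $(t_G^n f)(x_1,\dots,x_n)=f(b_0,x_1,\dots,x_n)$, obviously continuous, and compatible with restriction to $\wdtW$ because $b_0\in\wdtW$. One then proves an isometric isomorphism $H_{cbs}^*(X,W)\cong H_b^*(X,W)$ (Theorem~\ref{isometric:iso}) and factors the chain map $\eta_{G,A}^*\colon C_{cbs}^*\to C_b^*$ as $\rho_b^*\circ\nu_{G,A}^*$ through $C_{cb}^*$; since $\nu_{G,A}^*$ is norm non-increasing, $H^*(\nu_{G,A}^*)\circ H^*(\eta_{G,A}^*)^{-1}$ is the required isometric right inverse. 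Note also that the hypothesis $\pi_j(W)\cong\pi_j(X)$ for $j\ge 2$ enters not on the continuous side but in the Ivanov--Postnikov argument for $C_b^*$ (Proposition~\ref{standardok}), where it guarantees that the preimages $W_m$ in the tower are $m$-connected so that compatible cones exist at each stage.
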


In the absolute case, \emph{i.e.}~when $W=\emptyset$, Theorem~\ref{teo2}
is proved in~\cite[Theorem 1.2]{Frigerio}.
As observed at the end of Subsection~\ref{isom:sub},
the arguments developed in Section~\ref{boundedco2:sec}
also imply the following result
(see Section~\ref{boundedco:sec} for the definition of bounded cohomology
of pairs of groups):

\begin{teo}\label{gruppiespazi}
Let $(X,W)$ be a CW-pair. Then for every $n\in\matN$ there exists
an isomorphism between 
$H_b^n(\pi_1(X),\pi_1(W))$ and $H^n_{b}(X,W)$.
If in addition the pair $(X,W)$ is good, then
the isomorphism is isometric. 
\end{teo}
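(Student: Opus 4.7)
The plan is to factor the comparison between $H_b^n(\pi_1(X),\pi_1(W))$ and $H_b^n(X,W)$ through the continuous bounded cohomology $H_{cb}^n(X,W)$, using Theorem~\ref{teo2} for the topological half of the bridge and an Ivanov-type resolution argument for the group-theoretic half. I would proceed in the following order: first, establish an isometric isomorphism $H_b^n(\pi_1(X),\pi_1(W)) \cong H_{cb}^n(X,W)$ for good CW-pairs; second, compose with Theorem~\ref{teo2} to pass from continuous bounded cohomology to ordinary bounded cohomology; third, handle general CW-pairs (without the isometry) via a five-lemma argument.

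For the first step, the goodness hypothesis guarantees that $\pi_1(W)\hookrightarrow\pi_1(X)$ is injective, so the preimage $\wdtW$ of $W$ in the universal cover $\wdtX$ decomposes as a disjoint union of copies of the universal cover of $W$, each $\pi_1(W)$-equivariantly identified with that cover. Choosing a basepoint in $\wdtW$, I would build the compatible cone constructions on $\wdtX$ and $\wdtW$ that the paper alludes to. These exhibit the complex of $\pi_1(X)$-invariant continuous bounded cochains on the pair $(\wdtX,\wdtW)$---whose cohomology is $H_{cb}^*(X,W)$---as a strong relatively injective resolution of the trivial coefficient module, in the sense of the homological algebra of pairs of groups developed in Section~\ref{boundedco:sec}. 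The relative version of Ivanov's fundamental lemma, also set up in that section, then yields a canonical isometric isomorphism with $H_b^*(\pi_1(X),\pi_1(W))$.

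For the second step, Theorem~\ref{teo2} provides an isometric right inverse to $H^n(\rho_b^*)\colon H_{cb}^n(X,W)\to H_b^n(X,W)$; since $\rho_b^*$ is itself norm non-increasing on cochains, this forces $H^n(\rho_b^*)$ to be an isometric isomorphism. Composing with the isomorphism from the first step gives the isometric statement for good CW-pairs.

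For a general CW-pair one cannot expect isometry, but one can still deduce an isomorphism by applying the five-lemma to the long exact sequences in bounded cohomology of the topological pair $(X,W)$ and of the group pair $(\pi_1(X),\pi_1(W))$, with the comparison map induced naturally by the resolution constructions. On the absolute terms, the classical theorem of Gromov and Ivanov supplies the isomorphisms $H_b^*(X)\cong H_b^*(\pi_1(X))$ and $H_b^*(W)\cong H_b^*(\pi_1(W))$ (the latter applied componentwise to $W$). The main obstacle is the first step: one has to verify that the geometric resolution is indeed strong and relatively injective in the correct relative sense, and that the comparison with a bar-type resolution of $(\pi_1(X),\pi_1(W))$ is norm non-increasing in both directions. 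This is precisely where the assumption that $W$ is connected and $\pi_1$-injective in $X$ is essential, since it is what allows $\wdtW$ to sit compatibly inside $\wdtX$ and makes the cone construction simultaneously meaningful on both universal covers, so that the norm-control inherent in Ivanov's argument survives the passage to the relative context.
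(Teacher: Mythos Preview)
Your plan has two genuine gaps, and both stem from factoring through $H_{cb}^*(X,W)$ rather than through $H_{cbs}^*(X,W)$.

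First, in your step one you assert that the complex of $G$-invariant continuous bounded cochains on $(\wdtX,\wdtW)$ can be exhibited as a strong relatively injective resolution via the cone constructions. This is exactly what the paper is \emph{unable} to prove: see Remark~\ref{noexact}. Ivanov's contracting homotopy for $C_b^*(\wdtX)$ is built from the averaging operators $A_m^*$ on a Postnikov tower, and there is no reason these operators should carry continuous cochains to continuous cochains. Without a contracting homotopy on $C_{cb}^*(\wdtX)$ (and a compatible one on $C_{cb}^*(\wdtW)$), the resolution is not known to be strong, and the machinery of Section~\ref{boundedco:sec} does not apply. The paper circumvents this by working instead with the complex $C_{cbs}^*(\wdtX)$ of continuous bounded \emph{straight} cochains (those depending only on the vertices of a simplex), for which an explicit cone-based contracting homotopy is available (Equation~\eqref{contract}), and which is shown in Proposition~\ref{straightok} to give a proper allowable pair of resolutions regardless of whether $(X,W)$ is good.

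Second, your step two is a non sequitur: Theorem~\ref{teo2} only asserts that $H^n(\rho_b^*)$ admits an isometric right inverse, and this does \emph{not} force $H^n(\rho_b^*)$ to be injective, hence not an isomorphism. The paper never claims that $H^n(\rho_b^*)$ is an isomorphism.

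The paper's route is instead the triangle
\[
H^*_b(G,A)\ \xrightarrow{H^*(\beta^*_{G,A})}\ H_{cbs}^*(X,W)\ \xrightarrow{H^*(\eta^*_{G,A})}\ H_b^*(X,W),
\]
where both $\beta^*$ (built from the Bruhat-type function of Lemma~\ref{mappah}) and $\eta^*$ (evaluation on vertices) are explicit morphisms of proper pairs of resolutions, hence induce isomorphisms by Proposition~\ref{buona:prop}; this already yields the non-isometric isomorphism for any CW-pair satisfying the standing hypotheses of Section~\ref{boundedco2:sec}. For the isometry in the good case one constructs, via Ivanov's homotopy on $C_b^*$ (Proposition~\ref{standardok}), a third norm non-increasing map $H^*(\alpha^*_{G,A})\colon H_b^*(X,W)\to H_b^*(G,A)$ and checks that the composition around the triangle is the identity, so all three maps are isometries. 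Your five-lemma idea for the general case is essentially Proposition~\ref{buona:prop}, but note that it too needs a concrete morphism of pairs of resolutions as input; the paper supplies this via $\eta^*\circ\beta^*$, not an abstract comparison.
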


In Subsection~\ref{unbounded:sub} we show how 
Theorem~\ref{teo2} and \cite[Theorem 1.1]{Frigerio} 
can be exploited to prove the following:

\begin{teo}\label{teo2bis}
Let $(X,W)$ be a locally finite good CW-pair.
Then the map
$$H^n(\rho^\ast)\colon H_{c}^n(X,W)\longrightarrow H^n(X,W)$$
is an isometric isomorphism for every $n\in\matN$. 
\end{teo}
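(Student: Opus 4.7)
The strategy is to combine the absolute isomorphism \cite[Theorem 1.1]{Frigerio} with the relative bounded statement of Theorem~\ref{teo2} by means of the long exact cohomology sequences of the pair.

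The first step is to establish a short exact sequence of continuous cochain complexes
$$0\longrightarrow C_c^\ast(X,W)\longrightarrow C_c^\ast(X)\longrightarrow C_c^\ast(W)\longrightarrow 0$$
parallel to the tautological one for unrestricted singular cochains. The only non-trivial input is surjectivity of the restriction on the right, i.e.\ continuous extendability of cochains from $W$ to $X$; this is where the locally finite CW-pair hypothesis enters (as opposed to the mere countability granted by the good condition). Concretely, one chooses a cellular neighborhood $N$ of $W$ in $X$ with cellular retraction $r\colon N\to W$, and uses a partition of unity subordinate to a locally finite covering of $X$ to interpolate the pullback $\psi\mapsto\psi\circ r_\ast$ (defined for simplices landing in $N$) with zero; local finiteness guarantees that the resulting extension depends continuously on the simplex in the compact-open topology on $S_n(X)$.

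Passing to cohomology yields a commutative ladder of long exact sequences of the pair, whose vertical arrows are $H^\ast(\rho^\ast)$. The side arrows $H_c^n(X)\to H^n(X)$ and $H_c^n(W)\to H^n(W)$ are isometric isomorphisms by \cite[Theorem 1.1]{Frigerio}, since $X$ and $W$ are countable connected CW-complexes. The Five Lemma then delivers the bijection $H^n(\rho^\ast)\colon H_c^n(X,W)\to H^n(X,W)$ in every degree. For the isometric statement, one inequality is automatic from $\rho^\ast$ being norm-non-increasing. For the reverse, let $\alpha\in H^n(X,W)$; if $\|\alpha\|_\infty=\infty$ there is nothing to prove, so assume $\|\alpha\|_\infty<\infty$ and fix $\varepsilon>0$. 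Pick a bounded cocycle $\phi\in C_b^n(X,W)$ representing $\alpha$ with $\|\phi\|_\infty<\|\alpha\|_\infty+\varepsilon$; then $\|[\phi]_b\|_\infty\leq\|\phi\|_\infty$ in $H_b^n(X,W)$. Applying the isometric right inverse provided by Theorem~\ref{teo2} to $[\phi]_b$ yields a class $\gamma\in H_{cb}^n(X,W)$ with $H^n(\rho_b^\ast)(\gamma)=[\phi]_b$ and $\|\gamma\|_\infty=\|[\phi]_b\|_\infty$. Choose a representative $\phi_{cb}\in C_{cb}^n(X,W)$ of $\gamma$ with $\|\phi_{cb}\|_\infty\leq\|\gamma\|_\infty+\varepsilon$. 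Since $\phi_{cb}-\phi$ is a bounded coboundary (hence a coboundary in $C^\ast(X,W)$), the class $[\phi_{cb}]_c\in H_c^n(X,W)$ is a preimage of $\alpha$ under $H^n(\rho^\ast)$, and by injectivity from the previous step it is \emph{the} unique preimage. Its seminorm satisfies
$$\|[\phi_{cb}]_c\|_\infty\leq\|\phi_{cb}\|_\infty\leq\|\gamma\|_\infty+\varepsilon\leq\|\phi\|_\infty+\varepsilon<\|\alpha\|_\infty+2\varepsilon,$$
and letting $\varepsilon\to 0$ completes the isometry.

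The main obstacle is the first step: constructing a continuous extension of cochains from $W$ to $X$ that respects the compact-open topology on simplex spaces. This is exactly where local finiteness of the CW-structure plays its role (a purely countable CW-structure would not in general support such an extension with good continuity behaviour). Once this point is settled, the Five Lemma and the isometric control given by Theorem~\ref{teo2} combine cleanly to yield both the bijection and the norm equality.
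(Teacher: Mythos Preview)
Your overall architecture is the same as the paper's: build the long exact sequence of the pair in continuous cohomology, apply the Five Lemma using the absolute result from \cite{Frigerio}, and then upgrade to an isometry via Theorem~\ref{teo2}. Your $\varepsilon$-argument for the isometry is just an unpacking of the paper's identity
\[
\|\varphi\|_\infty=\inf\{\|\psi\|_\infty : \psi\in H_b^n(X,W),\ c^n(\psi)=\varphi\},
\]
and is fine.

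The only real divergence is in how you handle surjectivity of $C_c^\ast(X)\to C_c^\ast(W)$. You propose constructing extensions by hand via a retraction onto a cellular neighborhood and a partition-of-unity cutoff; this can be made to work but is considerably more delicate than you indicate (for instance, your interpolation needs a continuous bump on $S_n(X)$ that is $1$ on $S_n(W)$ and $0$ on simplices not landing in $N$, and producing such a bump is itself essentially an Urysohn-type problem on $S_n(X)$). The paper sidesteps all of this: since $X$ is locally finite it is metrizable, hence $S_n(X)=C(\Delta_n,X)$ with the compact-open (= sup) topology is metrizable and thus normal, $S_n(W)$ is closed in $S_n(X)$ because $W$ is closed in $X$, and Tietze's extension theorem gives the required continuous (and bounded) extension in one line. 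So your ``main obstacle'' dissolves with the right choice of ambient space on which to run Tietze. A minor remark: when you invoke \cite[Theorem~1.1]{Frigerio} you should cite local finiteness (metrizability) rather than countability, which is what that theorem actually uses.
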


\subsection{Acknowledgements}
The authors thank Maria Beatrice Pozzetti for several useful conversations
about the contents of Ivanov's paper~\cite{Ivanov}.

\section{The case of locally convex pairs}\label{convex:sec}
The following definitions can be found for instance in~\cite{Bridson}.
Let $(X,d)$ be a metric space (when $d$ is fixed, we denote $(X,d)$ simply by $X$). 
A \emph{geodesic segment} in $X$ is an isometric embedding of 
a bounded closed interval into $X$.
The metric $d$ (or the metric space $X=(X,d)$) is 
\emph{geodesic} if every two points in $X$ are joined by a 
geodesic segment (in particular, $X$ is path connected and locally path connected).
Moreover, $d$ (or $X=(X,d)$) is
\emph{globally convex} if it is geodesic and if any two geodesic segments $c_1\colon [0,a]\to X$, $c_2\colon [0,a]\to X$
such that $c_1(0)=c_2(0)$ satisfy the condition $d(c_1(t),c_2(t))\leq td(c_1(a),c_2(a))$
for every $t\in [0,a]$ (and in this case, $X$ is contractible, see Lemma~\ref{contgeo} below). 
We say that $d$ (or $X=(X,d)$) is \emph{locally convex} if every point
in $X$ has a neighbourhood in which the restriction of $d$ is convex
(in particular, it is geodesic). 
A subspace $Y\subseteq X$ is \emph{convex} if every geodesic segment (in $X$)
joining any two points of $Y$ is entirely contained in $Y$ (in particular, $Y$ is path connected).

Let us suppose that $X$ is complete and locally convex. Then it is locally contractible,
hence it
admits a universal covering $p\colon\widetilde{X}\to X$. We endow $\widetilde{X}$ 
with the length metric induced by $p$, \emph{i.e.}~the
unique length metric $\widetilde{d}$ such that $p\colon (\widetilde{X},\widetilde{d})\to
(X,d)$ is a local isometry (see~\cite[Proposition I.3.25]{Bridson}). Since $(X,d)$
is complete and geodesic, the same is true for $(\widetilde{X},\widetilde{d})$.
Moreover, Cartan-Hadamard Theorem for metric spaces (see~\cite[II.4.1]{Bridson}),
implies that the space $(\widetilde{X},\widetilde{d})$ is globally convex.
 
Let $W$ be any subset of $X$. We say that $(X,W)$ is a 
\emph{locally convex pair of metric spaces} (or simply a \emph{locally convex pair})
if the following conditions hold:
\begin{enumerate}
 \item 
$X$ is complete and locally convex;
\item
$W$ is locally path connected;
\item
every path-connected component of $p^{-1}(W)\subseteq \widetilde{X}$
is convex in $\wdtX$. 
\end{enumerate}

Throughout the whole section we denote by $(X,W)$ a locally convex pair of 
metric spaces, we fix a universal covering 
$p\colon\widetilde{X}\to X$ (where $\wdtX$ is endowed with the induced
metric), and we 
denote by $\widetilde{W}$ the subset 
$p^{-1}(W)\subseteq \wdtX$ (on the contrary, in Section~\ref{boundedco2:sec} 
we will denote by $\widetilde{W}$ a fixed connected component
of $p^{-1}(W)$).

\subsection{Straight simplices}\label{straight1:sub}
In order to properly define straight simplices we first need the following
result, which is an immediate consequence of Cartan-Hadamard Theorem for
metric spaces:

\begin{lemma}[\cite{Bridson}, II.4.1]\label{contgeo}
For every pair of points $p,q\in
\widetilde{X}$ there exists a unique geodesic segment in $\widetilde{X}$
joining $p$ to $q$. 
Moreover, if $\alpha_{p,q}\colon [0,1]\to \widetilde{X}$ is a
constant-speed parameterization of such a segment, then 
$\alpha_{p,q}$ continuously depends (with respect to the compact-open topology) 
on $p$ and $q$. In particular, $\wdtX$ is contractible.
\end{lemma}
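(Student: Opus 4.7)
The plan is to deduce all three assertions from the metric Cartan--Hadamard theorem that the preceding paragraph already invokes: since $(X,d)$ is complete and locally convex, the induced length metric $\widetilde{d}$ turns $(\wdtX,\widetilde{d})$ into a complete, simply connected, globally convex geodesic space. I will take as my only input the convexity inequality for constant-speed geodesics
\[
\widetilde{d}(\alpha_1(t),\alpha_2(t))\leq t\,\widetilde{d}(\alpha_1(1),\alpha_2(1)), \qquad t\in[0,1],
\]
which holds for any two constant-speed geodesics $\alpha_1,\alpha_2\colon[0,1]\to\wdtX$ with $\alpha_1(0)=\alpha_2(0)$, and then extract existence, uniqueness, continuity, and contractibility from it.

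Existence of a geodesic segment from $p$ to $q$ is immediate from geodesicity of $\wdtX$. For uniqueness, given two such segments, I would reparametrize both at constant speed on $[0,1]$; since they share both endpoints, the convexity inequality yields $\widetilde{d}(\alpha_1(t),\alpha_2(t))\leq t\cdot 0=0$, so $\alpha_1\equiv\alpha_2$. This produces the canonical constant-speed parameterization $\alpha_{p,q}$.

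For continuous dependence, suppose $(p_n,q_n)\to(p,q)$; I would compare $\alpha_{p_n,q_n}$ with $\alpha_{p,q}$ through the intermediate geodesic $\alpha_{p_n,q}$, applying convexity twice. A direct application to the pair $(\alpha_{p_n,q_n},\alpha_{p_n,q})$, which shares starting point $p_n$, gives a bound of the form $t\,\widetilde{d}(q_n,q)$; and an application to the time-reversals of $(\alpha_{p_n,q},\alpha_{p,q})$, which share starting point $q$, gives a bound of the form $(1-t)\,\widetilde{d}(p_n,p)$. The triangle inequality combines these into
\[
\widetilde{d}\bigl(\alpha_{p_n,q_n}(t),\alpha_{p,q}(t)\bigr)\leq t\,\widetilde{d}(q_n,q)+(1-t)\,\widetilde{d}(p_n,p),
\]
which is uniform in $t\in[0,1]$ and so delivers convergence in the compact-open topology. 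Contractibility then drops out by fixing a basepoint $x_0\in\wdtX$ and using the homotopy $H(x,s)=\alpha_{x_0,x}(1-s)$, whose joint continuity is exactly the continuous dependence just proved. I do not anticipate any genuine obstacle; the only point needing care is to arrange a common endpoint before invoking the convexity inequality, and the time-reversal trick makes this automatic.
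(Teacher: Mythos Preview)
Your argument is correct: each step---existence from geodesicity, uniqueness from the convexity inequality with coinciding endpoints, the two-step triangulation through $\alpha_{p_n,q}$ for continuous dependence, and contractibility via the geodesic homotopy---goes through as written. The time-reversal trick for arranging a common endpoint is exactly the right device.

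As for comparison: the paper does not give its own proof of this lemma. It is stated with a citation to Bridson--Haefliger~II.4.1 and introduced as ``an immediate consequence of Cartan--Hadamard Theorem for metric spaces.'' Your proposal supplies precisely the standard unpacking of that consequence, so there is nothing to compare against beyond noting that you have filled in what the paper leaves to the reference.
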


For $i\in\matN$ we denote by $e_i$ the point $(0,0,\ldots,1,\ldots,0,0,\ldots)\in \R^{\matN}$
where the unique non-zero coefficient is at the $i$-th entry (entries are indexed by $\matN$,
so $(1,0,\ldots)=e_0$). 
We denote by $\Delta_p$ the standard $p$-simplex, 
\emph{i.e.}~the convex hull of $e_0,\ldots,e_p$, and we observe
that with these notations we have $\Delta_p\subseteq \Delta_{p+1}$.

Let $k\in\matN$, and let $x_0,\ldots,x_k$ be points in $\widetilde{X}$. We recall here
the well-known definition of \emph{straight} simplex $[x_0,\ldots, x_k]\in S_k (\widetilde{X})$
with vertices $x_0,\ldots,x_k$:
if $k=0$, then $[x_0]$ is the $0$-simplex with image $x_0$; if straight simplices have
been defined for every $h\leq k$, then $[x_0,\ldots,x_{k+1}]\colon \Delta_{k+1}\to\widetilde{X}$
is determined by the following condition:
for every $z\in \Delta_k\subseteq \Delta_{k+1}$, the restriction
of $[x_0,\ldots,x_{k+1}]$ to the segment with endpoints
$z,e_{k+1}$ is a constant speed parameterization of
the geodesic joining $[x_0,\ldots,x_k] (z)$ to $x_{k+1}$
(the fact that $[x_0,\ldots,x_{k+1}]$ is well-defined and continuous is an immediate
consequence of Lemma~\ref{contgeo}).

\subsection{Nets}
Let 
$\Gamma\cong \pi_1 (X)$ be the 
group of the covering automorphisms of $p\colon \widetilde{X}\to X$, and observe
that, since $p$ is a local isometry, every element of $\Gamma$ is an isometry
of $\widetilde{X}$.

\begin{defn}\label{net:def}
A \emph{net} in $\widetilde{X}$ is given by a 
subset $\lamtil\subseteq \widetilde{X}$ and a locally finite collection
of Borel sets $\{\widetilde{B}_x\}_{x\in \lamtil}$ such that the following conditions hold:
\begin{enumerate}
\item
$\widetilde{X}=\bigcup_{x\in\lamtil} \widetilde{B}_x$ 
and $\widetilde{B}_x\cap \widetilde{B}_y=\emptyset$ for every $x,y\in \lamtil$
with $x\neq y$;
\item
$\gamma (\lamtil)=\lamtil$ for every $\gamma\in \Gamma$
and 
$\gamma (\widetilde{B}_x)=\widetilde{B}_{\gamma(x)}$ for every $x\in\lamtil$, $\gamma\in \Gamma$;
\item
if $\widetilde{K}$ is a path connected component of $\widetilde{W}$,
then
$\widetilde{K}\subseteq \bigcup_{x\in\lamtil\cap \widetilde{K}} \widetilde{B}_x$.
\end{enumerate}
\end{defn}


\begin{lemma}\label{net:lemma}
There exists a net.
\end{lemma}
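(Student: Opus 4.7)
My plan is to build $\widetilde{\Lambda}$ and the partition $\{\widetilde{B}_x\}_{x\in\widetilde{\Lambda}}$ in two parallel phases, one inside $\wdtW$ respecting its decomposition into path components, and one on the complement $\wdtX\setminus\wdtW$; then I would glue the two pieces together. Throughout, $\Gamma$-equivariance is enforced by choosing data on $\Gamma$-orbit representatives and transporting by the action.

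For the first phase, let $\{\widetilde{K}_j\}_{j\in J}$ denote the path-connected components of $\wdtW$; $\Gamma$ permutes this collection. I would pick a set $J_0\subseteq J$ of orbit representatives with stabilizers $\Gamma_j\leq \Gamma$, fix a small $\varepsilon>0$, and, by Zorn's Lemma, for each $j\in J_0$ choose a $\Gamma_j$-invariant maximal $\varepsilon$-separated subset $\widetilde{\Lambda}_j\subseteq \widetilde{K}_j$; by maximality the balls $B(x,\varepsilon)$ for $x\in\widetilde{\Lambda}_j$ cover $\widetilde{K}_j$. Transporting equivariantly by chosen coset representatives of $\Gamma/\Gamma_j$ gives a $\Gamma$-invariant set $\widetilde{\Lambda}_W=\bigsqcup_{j\in J}\widetilde{\Lambda}_j$. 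After fixing a $\Gamma$-equivariant well-order on $\widetilde{\Lambda}_W$ (well-order orbit representatives and extend), define, for $x\in\widetilde{\Lambda}_j$,
\[
\widetilde{C}_x=\bigl\{y\in \widetilde{K}_j \,:\, x\text{ is the least element of }\widetilde{\Lambda}_j\text{ with }d(x,y)<\varepsilon\bigr\}.
\]
The $\widetilde{C}_x$ are Borel, contained in $B(x,\varepsilon)$, partition $\widetilde{K}_j$, and are permuted $\Gamma$-equivariantly.

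For the second phase I perform the analogous construction on $\wdtX\setminus\wdtW$, obtaining a $\Gamma$-invariant set $\widetilde{\Lambda}_0$ and Borel cells $\widetilde{D}_x$ partitioning the complement. Setting $\widetilde{\Lambda}=\widetilde{\Lambda}_W\sqcup \widetilde{\Lambda}_0$ and $\widetilde{B}_x=\widetilde{C}_x$ or $\widetilde{D}_x$ according as $x$ lies in $\widetilde{\Lambda}_W$ or $\widetilde{\Lambda}_0$ produces a candidate net. Conditions (1) and (2) of Definition~\ref{net:def} are built into the construction; condition (3) holds because, for $y$ in a component $\widetilde{K}_j$ of $\wdtW$, the cell containing $y$ is some $\widetilde{C}_x$ with $x\in\widetilde{\Lambda}_j\subseteq \widetilde{K}_j$.

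The point I expect to require the most care is local finiteness of the global collection $\{\widetilde{B}_x\}$: although each $\widetilde{B}_x$ lies in $B(x,\varepsilon)$ and each $\widetilde{\Lambda}_j$ is $\varepsilon$-separated inside $\widetilde{K}_j$, distinct components may accumulate in $\wdtX$, so the union $\widetilde{\Lambda}_W$ need not be $\varepsilon$-separated as a subset of $\wdtX$. I would address this by first fixing a $\Gamma$-invariant locally finite open cover of $\wdtX$ of small diameter (available via paracompactness of the metric space $\wdtX$) and performing all Zorn-type constructions inside its elements, so that local finiteness of the cover passes to the Voronoi cells; this should also allow the parameter $\varepsilon$ to be chosen depending on the cover element, neutralising accumulation phenomena near $\partial \wdtW$.
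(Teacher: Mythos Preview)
Your overall architecture (build the partition in two layers, enforce equivariance by transporting choices along $\Gamma$-orbits) is reasonable, but there is a genuine gap at the step where you ``fix a $\Gamma$-equivariant well-order on $\widetilde{\Lambda}_W$ (well-order orbit representatives and extend)''. Such an object generally does not exist. The deck group $\Gamma$ acts freely on $\widetilde{X}$, hence each stabiliser $\Gamma_j$ acts freely on $\widetilde{\Lambda}_j$; if $\Gamma_j\neq\{1\}$ (which is the case whenever the relevant component of $W$ is not simply connected in $X$), then for any $\Gamma_j$-invariant total order and any $g\in\Gamma_j\setminus\{1\}$ one gets from $s<gs$ that $g^{-1}s<s$, contradicting minimality of the least element of the orbit. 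So the ``least element with $d(x,y)<\varepsilon$'' rule cannot be made $\Gamma$-equivariant, and the resulting cells $\widetilde{C}_x$ will not satisfy $\gamma(\widetilde{C}_x)=\widetilde{C}_{\gamma(x)}$. A genuine Voronoi rule is equivariant but has tie sets, and breaking ties again needs an ordering; you are back to the same obstruction.

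The paper sidesteps this entirely by working downstairs first: take a locally finite open refinement $\{V_i\}$ of an evenly-covered cover of $X$ with $V_i\cap W$ path connected, well-order the index set \emph{on $X$} (no group action, so an ordinary well-order suffices), set $B_i=V_i\setminus\bigcup_{j\prec i}V_j$, choose $x_i\in B_i$ in $W$ whenever possible, and then lift each $B_i$ to $\widetilde{X}$ sheet by sheet. Equivariance is then automatic (it is built into the lifting of an evenly-covered set), and local finiteness upstairs follows from local finiteness downstairs plus the covering property. Your own last paragraph --- passing to a $\Gamma$-invariant locally finite cover by small sets --- is heading in exactly this direction; once the cover is by evenly-covered sets, the $\Gamma$-action on the sheets over a single $V_i$ is simply transitive, so you can well-order on one sheet and transport, which is precisely the quotient construction. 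But as written, the earlier $\varepsilon$-net/well-order phase does not survive.
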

\begin{proof}
For every $q\in X$ let us denote by $U_q$ an evenly-covered 
neighbourhood
of $q$ in $X$ (with respect to the universal covering $\wdtX\to X$).
Since $W$ is locally path connected,
we may also suppose that $W\cap U_q$ is path connected.
Being metrizable, $X$ is paracompact, so
the open covering $\{U_q\}_{q\in X}$ admits a locally finite open refinement
$\{V_i\}_{i\in I}$. Let us now fix a total ordering $\preceq$ on $I$ in such a way that
$i\preceq j$ whenever $V_i\cap W\neq \emptyset$ and $V_j\cap W=\emptyset$, and let us set
$$
B_i=V_i\setminus\left(\bigcup_{j\prec i} V_j\right)\ .
$$
By construction, the family $\{B_i\}_{i\in I}$ is locally finite in $X$. Moreover,
every $B_i$ is
the intersection of an open set and a closed set, therefore it is a Borel
subset of $X$. For every $i\in I$ let us choose $x_i\in B_i$ in such a way
that $x_i\in W$ whenever $B_i\cap W\neq \emptyset$, and let us set
$\Lambda=\bigcup_{i\in I} \{x_i\}$. We also set $B_{x_i}=B_i$ for every $i\in I$.

Let us now define $\lamtil=p^{-1}(\Lambda)$. 
For every $i\in I$ we choose an element
$\widetilde{x}_i\in p^{-1}(x_i)$, 
and we take $q_i\in X$ in such a way that $B_{x_i}\subseteq U_{q_i}$.
Being simply connected, $U_{q_i}$ lifts to the disjoint union
$p^{-1} (U_{q_i})=\bigcup_{\gamma\in\Gamma} \gamma(\widetilde{U}_{q_i})$,
where $\widetilde{U}_{q_i}$ is the connected component of $p^{-1} (U_{q_i})$
containing $\widetilde{x}_i$. 

We are now ready to define $\widetilde{B}_x$, where $x$ is any element
of $\lamtil$. In fact, every $x\in \lamtil$ uniquely determines
an index $i\in I$ and an element $\gamma\in \Gamma$ such that
$x=\gamma(\widetilde{x}_i)$, and we can set
$\widetilde{B}_x=\gamma(\widetilde{U}_{q_i}\cap p^{-1}(B_{x_i}))$. 
Of course $\widetilde{B}_x$ is a Borel subset of $\widetilde{X}$.

It is now easy to check that the pair $\left(\lamtil, \{\widetilde{B}_x\}_{x\in\lamtil}\right)$
provides a net: the local finiteness of the family
$\{\widetilde{B}_x,\, x\in\lamtil\}$
readily descends from the fact $p$ is a covering and $\{B_x,\, x\in\Lambda\}$ is locally finite
in $X$, and conditions~(1) and (2) of Definition~\ref{net:def}
are an obvious consequence of our choices. Let us now show that
condition~(3) also holds. We fix $x\in \widetilde{\Lambda}$ such that 
$\wdtW\cap \widetilde{B}_x\neq\emptyset$. By construction we have $x\in\wdtW$,
and there exist $\gamma\in\Gamma$ and $i\in I$ such
that $\widetilde{B}_x\subseteq \gamma(\widetilde{U}_{q_i})$. Our assumption
that $U_q\cap W$ is path connected implies that $\gamma(\widetilde{U}_{q_i})\cap
\wdtW$ is also path connected, so the 
set $\widetilde{B}_x\cap \wdtW$ is entirely contained
in the path connected component of $\wdtW$ containing $x$,
whence the conclusion.
\end{proof}

\subsection{Straightening}\label{straight2:sub}
We are now ready to define our straightening operator.
Let $\left(\lamtil, \{\widetilde{B}_x\}_{x\in\lamtil}\right)$ be a net.
We denote by
$S_n^{\lamtil} (\widetilde{X})\subseteq S_n (\widetilde{X})$
the set of straight $n$-simplices
in $\widetilde{X}$ with vertices in $\lamtil$. Then we let 
$\strtil_n \colon C_n (\widetilde{X})\to
C_n (\widetilde{X})$ be the unique linear map such that 
for $\widetilde{\sigma}\in S_n (\widetilde{X})$
$$
\strtil_n (\widetilde{\sigma})=[x_0,\ldots,x_n]\in S_n^{\lamtil} (\widetilde{X}),
$$
where $x_i\in\lamtil$ is such that $\widetilde{\sigma} (e_i)\in \widetilde{B}_{x_i}$
for $i=0,\ldots, n$.

\begin{prop}\label{straight:prop}
The map $\strtil_\ast\colon C_\ast (\widetilde{X})\to C_\ast (\widetilde{X})$ satisfies the following properties:
\begin{enumerate}
\item
$d_{n+1}\circ \strtil_{n+1} = \strtil_{n}\circ d_{n+1}$ for every $n\in\matN$;
\item
$\strtil_n (\gamma\circ\widetilde{\sigma})=\gamma\circ \strtil_n(\widetilde{\sigma})$
for every $n\in\matN$, $\gamma\in\Gamma$, $\widetilde{\sigma}\in S_n (\widetilde{X})$;
\item
$\strtil_\ast (C_\ast (\widetilde{W}))
\subseteq C_\ast (\widetilde{W})$;
\item
the induced chain map $C_\ast (\widetilde{X},\widetilde{W})\to  
C_\ast (\widetilde{X},\widetilde{W})$, which we will still denote by $\strtil_\ast$, 
is $\Gamma$-equivariantly homotopic to the identity.
\end{enumerate}
\end{prop}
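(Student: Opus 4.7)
The first three claims are essentially bookkeeping; (4) contains the real work. My plan is to dispatch (1)--(3) using the inductive definition of straight simplices together with the three axioms in Definition~\ref{net:def}, and then attack (4) by an explicit geodesic prism construction.

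For (1), the key lemma is that the $j$-th face of $[x_0,\ldots,x_n]$ equals $[x_0,\ldots,\widehat{x_j},\ldots,x_n]$; this follows by induction on $n$ directly from the recursive definition in Subsection~\ref{straight1:sub} (for $j<n$ one peels off the outer cone on $x_n$ and applies the inductive hypothesis to $[x_0,\ldots,x_{n-1}]$, while the face $j=n$ is $[x_0,\ldots,x_{n-1}]$ by definition). With this in hand, if $\tilde{\sigma}(e_i)\in\widetilde{B}_{x_i}$, both $d_n\strtil_n(\tilde{\sigma})$ and $\strtil_{n-1}(d_n\tilde{\sigma})$ equal $\sum_{j=0}^n(-1)^j[x_0,\ldots,\widehat{x_j},\ldots,x_n]$. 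For (2), isometries preserve constant-speed geodesic parameterizations, so a parallel induction gives $\gamma\circ[x_0,\ldots,x_n]=[\gamma(x_0),\ldots,\gamma(x_n)]$, while condition~(2) in Definition~\ref{net:def} ensures $(\gamma\circ\tilde{\sigma})(e_i)\in\widetilde{B}_{\gamma(x_i)}$. For (3), a simplex $\tilde{\sigma}\in S_n(\widetilde{W})$ has image in a single path-connected component $\widetilde{K}$ of $\widetilde{W}$; disjointness of the $\widetilde{B}_x$ combined with condition~(3) in Definition~\ref{net:def} forces $x_0,\ldots,x_n\in\widetilde{\Lambda}\cap\widetilde{K}$, and convexity of $\widetilde{K}$ together with Lemma~\ref{contgeo} gives, again by induction on the recursive construction, that $[x_0,\ldots,x_n]$ is entirely contained in $\widetilde{K}\subseteq\widetilde{W}$.

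The substance of the proposition is (4). I construct an explicit $\Gamma$-equivariant chain homotopy $T_\ast\colon C_\ast(\widetilde{X})\to C_{\ast+1}(\widetilde{X})$ by geodesic interpolation. Given $\tilde{\sigma}\in S_n(\widetilde{X})$, Lemma~\ref{contgeo} produces a well-defined continuous map
$$
H_{\tilde{\sigma}}\colon\Delta_n\times[0,1]\longrightarrow\widetilde{X},\qquad H_{\tilde{\sigma}}(z,t)=\alpha_{\tilde{\sigma}(z),\,\strtil_n(\tilde{\sigma})(z)}(t),
$$
which is a homotopy from $\tilde{\sigma}$ to $\strtil_n(\tilde{\sigma})$. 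Triangulating the prism $\Delta_n\times[0,1]$ in the usual way as $\bigcup_{j=0}^n P_j$, with $P_j$ the $(n+1)$-simplex spanned by $(e_0,0),\ldots,(e_j,0),(e_j,1),\ldots,(e_n,1)$, I set
$$
T_n(\tilde{\sigma})=\sum_{j=0}^n(-1)^j\,H_{\tilde{\sigma}}\circ\phi_j,
$$
where $\phi_j\colon\Delta_{n+1}\to P_j$ is the canonical affine parameterization. The classical prism identity then yields $d_{n+1}T_n+T_{n-1}d_n=\id-\strtil_n$. Because $\strtil_n$ is $\Gamma$-equivariant by (2) and every $\gamma\in\Gamma$ acts by isometries, one checks immediately that $H_{\gamma\circ\tilde{\sigma}}=\gamma\circ H_{\tilde{\sigma}}$, so $T_n$ is $\Gamma$-equivariant as required.

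It remains to verify that $T_\ast$ descends to $C_\ast(\widetilde{X},\widetilde{W})$, i.e.~that $T_n(C_n(\widetilde{W}))\subseteq C_{n+1}(\widetilde{W})$. If $\tilde{\sigma}\in S_n(\widetilde{W})$ has image in a component $\widetilde{K}$ of $\widetilde{W}$, part (3) guarantees that $\strtil_n(\tilde{\sigma})$ does too, and convexity of $\widetilde{K}$ in $\widetilde{X}$ forces every interpolating geodesic $\alpha_{\tilde{\sigma}(z),\,\strtil_n(\tilde{\sigma})(z)}$ to stay in $\widetilde{K}$, whence $H_{\tilde{\sigma}}(\Delta_n\times[0,1])\subseteq\widetilde{W}$. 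This compatibility with the relative complex is the main obstacle: the chain homotopy must be simultaneously natural (for $\Gamma$-equivariance), local along a canonical geodesic interpolation (to respect $\widetilde{W}$), and satisfy the prism identity. All three features rely essentially on the global convexity of $\widetilde{X}$ and the convexity of path-connected components of $\widetilde{W}$, which are exactly the conditions packaged into the definition of a locally convex pair.
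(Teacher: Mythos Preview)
Your proof is correct and follows essentially the same approach as the paper's: the face and isometry formulas for straight simplices handle (1) and (2), the net axioms plus convexity of the components of $\widetilde{W}$ give (3), and the geodesic prism interpolation from $\tilde{\sigma}$ to $\strtil_n(\tilde{\sigma})$ (triangulated in the standard way) furnishes the $\Gamma$-equivariant chain homotopy in (4), with $T_n(C_n(\widetilde{W}))\subseteq C_{n+1}(\widetilde{W})$ again coming from convexity. The only difference is that you spell out the prism triangulation and the inductions more explicitly than the paper does.
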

\begin{proof}
If $x_0,\ldots,x_n\in\widetilde{X}$, then 
it is easily seen that for every $i\leq n$ the $i$-th face of
$[x_0,\ldots,x_n]$ is given by
$[x_0,\ldots,\widehat{x}_i,\ldots,x_n]$; moreover since isometries preserve geodesics we
have $\gamma\circ [x_0,\ldots,x_n]=[\gamma (x_0),\ldots, \gamma (x_n)]$
for every $\gamma\in {\rm Isom} (\widetilde{X})$. Together with property~(2) in the definition of
net, these facts readily imply points~(1) and (2) of the proposition.

If ${\widetilde{\sigma}}\in S_n (\widetilde{W})$, then all the vertices of ${\widetilde{\sigma}}$
lie in the same connected component $\widetilde{K}$ of $\widetilde{W}$.
By property~(3) in the definition of net, the vertices of $\strtil_n ({\widetilde{\sigma}})$ still lie
in $\widetilde{K}$. Since $(X,W)$ is a locally convex pair, the 
subset $\widetilde{K}$ is convex in $\widetilde{X}$, so $\strtil_n ({\widetilde{\sigma}})$
belongs to $S_n (\widetilde{W})$, whence~(3). 

Finally, for ${\widetilde{\sigma}}\in S_n (\widetilde{X})$, 
let $F_{\widetilde{\sigma}}\colon \Delta_n\times [0,1]\to \widetilde{X}$
be defined by $F_{\widetilde{\sigma}}(x,t)=\beta_x (t)$, 
where $\beta_x\colon [0,1]\to \widetilde{X}$ is the constant-speed parameterization of
the geodesic segment joining ${\widetilde{\sigma}} (x)$ with 
$\strtil({\widetilde{\sigma}}) (x)$. We set $T_n({\widetilde{\sigma}})=(F_{\widetilde{\sigma}})_\ast (c)$,
where $c$ is the standard chain triangulating the prism $\Delta_n\times [0,1]$ by $(n+1)$-simplices.
The fact that $d_{n+1}T_n + T_{n-1}d_n={\rm Id}-\strtil_n$ is now easily checked,
while the $\Gamma$-equivariance of $T_\ast$ is a consequence of property~(2) of nets
together with the fact that geodesics are preserved by isometries. 
As above, the fact that $T_n(C_n(\widetilde{W}))\subseteq C_{n+1} (\widetilde{W})$
is a consequence of
the convexity of the components of $\widetilde{W}$.
\end{proof}

Let $\Lambda=p (\lamtil)$, and let
$S_\ast^\Lambda (X)$ 
be the subset of 
$S_\ast (X)$ given by those singular simplices which are obtained
by composing a simplex in $S_\ast^{\lamtil} (\widetilde{X})$ with the covering projection $p$. 
As a consequence of Proposition~\ref{straight:prop}
we get the following:

\begin{prop}\label{piecewise:prop}
The map  $\strtil_\ast$
induces a chain map $\str_\ast\colon C_\ast (X,W)\to 
C_\ast (X,W)$ which is homotopic to the identity.
\end{prop}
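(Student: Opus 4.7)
The plan is to define $\str_\ast$ by pushing down $\strtil_\ast$ via the covering projection $p\colon\widetilde{X}\to X$, exploiting that the $\Gamma$-equivariance established in Proposition~\ref{straight:prop}(2) is exactly what makes such a pushdown well-defined. Concretely, given $\sigma\in S_n(X)$, since $\Delta_n$ is simply connected $\sigma$ admits a lift $\widetilde{\sigma}\in S_n(\widetilde{X})$, and any two lifts differ by a unique element of $\Gamma$. I would set
$$\str_n(\sigma)=p\circ\strtil_n(\widetilde{\sigma})$$
and extend linearly to $C_n(X)$. Well-definedness is immediate from Proposition~\ref{straight:prop}(2): if $\widetilde{\sigma}'=\gamma\circ\widetilde{\sigma}$ with $\gamma\in\Gamma$, then $\strtil_n(\widetilde{\sigma}')=\gamma\circ\strtil_n(\widetilde{\sigma})$, so $p\circ\strtil_n(\widetilde{\sigma}')=p\circ\strtil_n(\widetilde{\sigma})$.

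Next I would check that $\str_\ast$ is a chain map and that it preserves $C_\ast(W)$. The chain map property descends from Proposition~\ref{straight:prop}(1) together with the fact that post-composition with $p$ commutes with the face operators. As for the subcomplex $C_\ast(W)$, take $\sigma\in S_n(W)$ and any lift $\widetilde{\sigma}$: since $\Delta_n$ is connected and $\widetilde{\sigma}$ takes values in $\widetilde{W}=p^{-1}(W)$, its image lies in a single path-connected component of $\widetilde{W}$, so $\widetilde{\sigma}\in S_n(\widetilde{W})$. Proposition~\ref{straight:prop}(3) then gives $\strtil_n(\widetilde{\sigma})\in S_n(\widetilde{W})$, whence $\str_n(\sigma)\in S_n(W)$. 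Therefore $\str_\ast$ descends to the desired chain map $C_\ast(X,W)\to C_\ast(X,W)$.

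For the homotopy to the identity I would push down, by exactly the same recipe, the $\Gamma$-equivariant chain homotopy $T_\ast$ constructed in the proof of Proposition~\ref{straight:prop}: setting $h_n(\sigma)=p\circ T_n(\widetilde{\sigma})$ yields a well-defined operator $h_n\colon C_n(X)\to C_{n+1}(X)$; the inclusion $T_n(C_n(\widetilde{W}))\subseteq C_{n+1}(\widetilde{W})$ (also noted in the proof of Proposition~\ref{straight:prop}) ensures that $h_\ast$ respects $C_\ast(W)$ and therefore descends to $C_\ast(X,W)$; and the relation $d_{n+1}T_n+T_{n-1}d_n=\mathrm{Id}-\strtil_n$ pushes down to $d_{n+1}h_n+h_{n-1}d_n=\mathrm{Id}-\str_n$. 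I do not expect any serious obstacle here: the argument is routine once Proposition~\ref{straight:prop} is available, the only real subtlety being the systematic bookkeeping of $\Gamma$-equivariance, which is precisely the mechanism that allows every construction on $\widetilde{X}$ to descend to $X$.
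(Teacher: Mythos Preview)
Your proposal is correct and is exactly the argument the paper has in mind: the paper gives no detailed proof, stating only that the proposition is ``a consequence of Proposition~\ref{straight:prop}'', and your write-up supplies precisely the routine descent-by-$\Gamma$-equivariance that this sentence encodes. The only cosmetic point is that $T_n(\widetilde{\sigma})$ is a chain rather than a single simplex, so $p\circ T_n(\widetilde{\sigma})$ should be read as $p_\ast\bigl(T_n(\widetilde{\sigma})\bigr)$; otherwise nothing needs to change.
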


\begin{rem}
The maps $\strtil_\ast,\str_\ast$ obviously depend on the net chosen for their construction.
Such a dependence is however somewhat inessential in our arguments 
below.
Henceforth we understand that a net 
$\left(\lamtil, \{\widetilde{B}_x\}_{x\in\lamtil}\right)$
is fixed, and we denote
by $\strtil_\ast, \str_\ast$
the corresponding straightening operators.
\end{rem}

We are now ready to construct a chain map $\theta_*\colon \calC_\ast(X,W)\to C_\ast (X,W)$
whose induced map in homology will provide the desired norm non-increasing
inverse of $H_*(\iota_*)$.

Let us fix a simplex $\sigma\in S_n^\Lambda (X)$. It is readily seen
that the set $\str_n^{-1}(\sigma)$ is a Borel subset of $S_n (X)$. Therefore,
for every measure $\mu\in \calC_n (X)$ it makes sense to set
$$
c_\sigma(\mu)=\mu(\str_n^{-1}(\sigma))\ \in\R\ .
$$

\begin{lemma}\label{finite:lem}
 For every measure $\mu\in\calC_n (X)$, the set
$$
\{\sigma\in S_n^\Lambda (X)\, |\, c_\sigma(\mu)\neq 0\}
$$
is finite.
\end{lemma}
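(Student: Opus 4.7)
The plan is to reduce the statement to a finiteness claim about $\str_n(K)$ for $K \subseteq S_n(X)$ compact. By definition of $\calC_n(X)$, the measure $\mu$ admits a compact determination set $K$, so $|\mu|(S_n(X)\setminus K)=0$. Since $c_\sigma(\mu) = \mu(\str_n^{-1}(\sigma))$, whenever $c_\sigma(\mu)\neq 0$ we must have $\str_n^{-1}(\sigma)\cap K\neq\emptyset$, i.e.\ $\sigma\in\str_n(K)$. Hence it suffices to prove that $\str_n(K)$ is finite for every compact $K\subseteq S_n(X)$.

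Fix such a $K$ and pick any $\tau_0\in K$. Choose a lift $\widetilde{\tau}_0\in S_n(\wdtX)$, so that $p\circ\widetilde{\tau}_0=\tau_0$. By the local finiteness of $\{\widetilde{B}_x\}_{x\in\lamtil}$, for each $i\in\{0,\dots,n\}$ there is an open neighbourhood $U_i\subseteq\wdtX$ of $\widetilde{\tau}_0(e_i)$ meeting only finitely many members of the family; denote by $F_i^{\tau_0}\subseteq\lamtil$ this finite set of indices. Standard covering-space theory (path lifting applied to the homotopy $(\sigma,z)\mapsto\sigma(z)$) produces an open neighbourhood $V_{\tau_0}$ of $\tau_0$ in the compact-open topology on $S_n(X)$ and a continuous local section $\ell_{\tau_0}\colon V_{\tau_0}\to S_n(\wdtX)$ of the post-composition map $\widetilde{\sigma}\mapsto p\circ\widetilde{\sigma}$, satisfying $\ell_{\tau_0}(\tau_0)=\widetilde{\tau}_0$. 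Continuity of the evaluations $\sigma\mapsto \ell_{\tau_0}(\sigma)(e_i)$ allows us to shrink $V_{\tau_0}$ so that $\ell_{\tau_0}(\sigma)(e_i)\in U_i$ for every $\sigma\in V_{\tau_0}$ and every $i$.

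Then for any $\sigma\in V_{\tau_0}$ the unique $x_i\in\lamtil$ with $\ell_{\tau_0}(\sigma)(e_i)\in\widetilde{B}_{x_i}$ lies in $F_i^{\tau_0}$, and since $\str_n(\sigma)=p\circ[x_0,\dots,x_n]$ can be computed using any lift of $\sigma$, we obtain
$$
\str_n(V_{\tau_0})\subseteq \bigl\{\, p\circ[x_0,\dots,x_n] \,:\, (x_0,\dots,x_n)\in F_0^{\tau_0}\times\cdots\times F_n^{\tau_0}\,\bigr\},
$$
which is a finite set. By compactness of $K$, finitely many such neighbourhoods $V_{\tau_1},\dots,V_{\tau_m}$ cover $K$, whence $\str_n(K)\subseteq\bigcup_{j=1}^{m}\str_n(V_{\tau_j})$ is finite, as required.

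The only delicate point in this outline is the construction of the continuous local section $\ell_{\tau_0}$: this follows from the path-lifting property of $p$ together with the simple connectedness of $\Delta_n$, so that an initial lift of $\tau_0$ propagates continuously to nearby simplices. Once this is in hand, the argument is a routine combination of the local finiteness built into the definition of a net with the compactness of the determination set $K$.
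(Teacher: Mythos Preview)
Your proof is correct and follows essentially the same route as the paper: reduce to showing that $\str_n$ has finite image on a compact set (equivalently, that the family $\{\str_n^{-1}(\sigma)\}$ is locally finite), lift to $\wdtX$, and exploit the local finiteness of the net to conclude. The only cosmetic difference is that the paper cites the fact that $p_n\colon S_n(\wdtX)\to S_n(X)$ is a covering (hence open) and pushes down an upstairs neighbourhood, whereas you construct the local section of $p_n$ by hand via path-lifting---these are two sides of the same coin.
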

\begin{proof}
 Since $\mu$ admits a compact determination set, it is sufficient
to show that the family $\{\str_n^{-1}(\sigma),\, \sigma\in S_n^\Lambda (X)\}$
is locally finite in $S_n (X)$. So, let us take $\sigma_0\in S_n (X)$,
and let $\widetilde{\sigma}_0\in S_n(\widetilde{X})$ be a lift of
$\sigma_0$ to $\widetilde{X}$. For every $j=0,\ldots,n$, let
$Z_i$ be an open neighbourhood of $\widetilde{\sigma}_0(e_i)$ that intersects
only a finite number of $\widetilde{B}_{x_i}$'s, and let 
$\widetilde\Omega\subseteq S_n(\widetilde{X})$ be the set of $n$-simplices
whose $i$-th vertex belongs to $Z_i$ for every $i=0,\ldots,n$. Then
$\widetilde\Omega$ is an open neighbourhood of $\widetilde{\sigma}_0$ in
$S_n(\widetilde{X})$. 

Let $p_n\colon S_n(\widetilde{X})\to S_n (X)$ be the map taking
every $\widetilde{\sigma}\in S_n(\widetilde{X})$ into $p\circ\widetilde{\sigma}$.
It is proved in~\cite[Lemma A.4]{Frigerio} (see also~\cite{Loh})
that $p_n$ is a covering,
whence an open map, so $\Omega=p_n(\widetilde\Omega)$ is an open neighbourhood
of $\sigma_0$ in $S_n (X)$. 
Moreover, by construction the set $\str_n(\Omega)=\str_n(p_n(\widetilde\Omega))=
p_n(\strtil_n(\widetilde\Omega))$ is finite, whence the conclusion.
\end{proof}

By Lemma~\ref{finite:lem} we can define the map
$$
\theta_n\colon \calC_n(X)\to C_n(X),\qquad
\theta_n(\mu)=\sum_{\sigma\in S_n^\Lambda(X)} c_\sigma (\mu) \sigma \ .
$$

\begin{lemma}\label{thetalemma}
We have:
\begin{enumerate}
 \item 
$\theta_n\circ\partial_{n+1}=d_{n+1}\circ\theta_{n+1}$ for every $n\in\matN$.
\item
$\theta_n(\calC_n(W))\subseteq C_n(W)$ for every $n\in\matN$.
\item
$\|\theta_n(\mu)\|_1\leq \|\mu\|_\mea$ for every $\mu\in\calC_n(X)$, $n\in\matN$. 
\end{enumerate}
\end{lemma}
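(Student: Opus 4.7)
The plan is to address the three items separately: (2) and (3) follow from relatively direct observations about the construction of $\theta_n$, while (1) requires the key technical identity that straightening commutes with face maps.

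For item (3), the starting point is that the sets $\{\str_n^{-1}(\sigma)\}_{\sigma \in S_n^\Lambda(X)}$ are pairwise disjoint Borel subsets of $S_n(X)$ (they are disjoint because $\str_n$ is a well-defined function). Writing the Hahn decomposition $\mu = \mu_+ - \mu_-$, one has $|c_\sigma(\mu)| \leq \mu_+(\str_n^{-1}(\sigma)) + \mu_-(\str_n^{-1}(\sigma))$ for every $\sigma$, and summing over $\sigma$ using disjointness gives $\|\theta_n(\mu)\|_1 = \sum_\sigma |c_\sigma(\mu)| \leq \mu_+(S_n(X)) + \mu_-(S_n(X)) = \|\mu\|_\mea$. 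For item (2), suppose $\mu \in \calC_n(W)$ admits a compact determination set contained in $S_n(W)$. If $c_\sigma(\mu) \neq 0$, then $\str_n^{-1}(\sigma) \cap S_n(W) \neq \emptyset$, so there exists $\tau \in S_n(W)$ with $\str_n(\tau) = \sigma$. Proposition~\ref{straight:prop}(3) (combined with condition~(3) in the definition of a net, which ensures that vertices of simplices in $\wdtW$ get labeled by points of $\wdtW\cap\lamtil$) guarantees that $\str_n$ sends $S_n(W)$ into itself, hence $\sigma \in S_n(W)$ and $\theta_n(\mu) \in C_n(W)$.

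For item (1), the crucial lemma is that straightening commutes with face maps. Denoting by $\iota_j \colon \Delta_n \hookrightarrow \Delta_{n+1}$ the inclusion as the $j$-th face, I would first prove that for every $\widetilde{\sigma} \in S_{n+1}(\wdtX)$,
$$\strtil_n(\widetilde{\sigma}\circ \iota_j) = \strtil_{n+1}(\widetilde{\sigma})\circ \iota_j.$$
Indeed, if $\widetilde{\sigma}(e_i)\in \widetilde{B}_{x_i}$ for $i=0,\ldots,n+1$, both sides equal the straight simplex $[x_0,\ldots,\widehat{x}_j,\ldots,x_{n+1}]$, using the face computation already recorded in the proof of Proposition~\ref{straight:prop}. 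Projecting via $p$ yields the desired identity $\str_n(\sigma\circ\iota_j) = \str_{n+1}(\sigma)\circ\iota_j$ for every $\sigma \in S_{n+1}(X)$.

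Granting this, the computation of item (1) proceeds by collecting coefficients. For a fixed $\tau \in S_n^\Lambda(X)$ and fixed $j$, the coefficient of $\tau$ in $\sum_\sigma c_\sigma(\mu)(\sigma \circ \iota_j)$ equals $\mu\bigl(\bigsqcup_{\sigma : \sigma\circ\iota_j = \tau} \str_{n+1}^{-1}(\sigma)\bigr)$, and by the face-commutation identity the indexing union is precisely the preimage of $\str_n^{-1}(\tau)$ under the $j$-th face map $S_{n+1}(X)\to S_n(X)$. By definition of push-forward this $\mu$-measure equals $\mu^j(\str_n^{-1}(\tau)) = c_\tau(\mu^j)$. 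Summing over $j$ with signs and comparing with $\theta_n(\partial_{n+1}\mu) = \sum_j(-1)^j \sum_\tau c_\tau(\mu^j)\tau$ yields $d_{n+1}\theta_{n+1}(\mu) = \theta_n(\partial_{n+1}\mu)$; Lemma~\ref{finite:lem} ensures that all sums are finite and the regroupings are legitimate. I expect the face-commutation identity to be the only genuinely delicate step; once it is in place, item (1) reduces to bookkeeping.
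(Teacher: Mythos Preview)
Your proposal is correct and follows essentially the same approach as the paper, just with considerably more detail filled in. The paper disposes of (1) in a single sentence (``direct consequence of the fact that $\str_\ast$ is a chain map''), and your face-commutation identity together with the coefficient bookkeeping is precisely the computation that makes this sentence true; note that the identity $\strtil_n(\widetilde\sigma\circ\iota_j)=\strtil_{n+1}(\widetilde\sigma)\circ\iota_j$ is already implicit in the proof of Proposition~\ref{straight:prop}(1), so you are re-deriving rather than adding a new ingredient. For (2) and (3) your arguments coincide with the paper's (the Hahn decomposition is one standard way to see that $\sum_j|\mu(Z_j)|\le\|\mu\|_\mea$ for disjoint Borel sets, which is exactly what the paper invokes).
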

\begin{proof}
 Point~(1) is a direct consequence of the fact that $\str_\ast$ is a chain map.

Since $\str_n(C_n(W))\subseteq C_n(W)$, if $\sigma\in S_n^\Lambda (X)\setminus S_n(W)$,
then $\str_n^{-1}(\sigma)\cap S_n(W)=\emptyset$. Therefore, if $\mu\in\calC_n(W)\subseteq
\calC_n(X)$, then $c_\sigma (\mu)=\mu(\str_n^{-1}(\sigma))=0$, whence point~(2).

Point~(3) is a consequence of the fact that, if $\{Z_j\}_{j\in J}$
is a finite collection of pairwise disjoint Borel subsets of $S_n (X)$, then
$\sum_{j\in J} |\mu(Z_j)| \leq \|\mu\|_\mea$. 
\end{proof}

\subsection{Concluding the proof of Theorem~\ref{teo3}}\label{proofconvex:subsec}
As a consequence of Lemma~\ref{thetalemma}, the map
$\theta_\ast\colon \calC_\ast (X)\to C_\ast (X)$ induces 
norm non-increasing maps
$$
\overline{\theta}_\ast \colon \calC_\ast (X,W)\to C_\ast (X,W),\qquad
H_\ast(\overline{\theta}_\ast) \colon \calH_\ast (X,W)\to H_\ast (X,W)\ .
$$
Since we already know that $H_\ast(\iota_\ast)\colon H_\ast (X,W)\to \calH_\ast(X,W)$
is a norm non-increasing isomorphism, in order to prove that
$H_\ast(\iota_\ast)$ is an isometry it is sufficient to show
that $H_n(\overline{\theta}_\ast)\circ H_n(\iota_\ast)$ is the identity
of $H_n (X,W)$ for every $n\in\matN$. However, it follows by the very definitions
that $\overline{\theta}_n\circ \iota_n=\str_n$ for every $n\in \matN$, so the conclusion follows
from Proposition~\ref{piecewise:prop}.

\section{Relative bounded cohomology of groups}\label{boundedco:sec}
Let us recall some basic definitions and results about the bounded cohomology
of groups. For full details we refer the reader to~\cite{Gromov, Ivanov, Monod}.
Henceforth, we denote by $G$ a fixed group, which has to be thought as endowed with the 
discrete topology.
 
\begin{defn}[\cite{Ivanov, Monod}]
A \emph{Banach $G$-module} is a Banach space $V$ with a (left) action of $G$ such that
$\|g\cdot v\|\leq\|v\|$ for every $g \in G$ and every $v \in V$. A $G$-morphism
of Banach $G$-modules is a bounded $G$-equivariant linear operator.
\end{defn} 
From now on we refer to a Banach $G$-module  simply as a $G$-module.

\subsection{Relative injectivity}
A bounded linear map $\iota\colon A\rightarrow B$ of Banach spaces is 
\emph{strongly injective}
if there is a bounded linear map 
$\sigma\colon B\rightarrow A$ with $\|\sigma\|\leq 1$ and 
$\sigma\circ\iota={\rm Id}_A$ (in particular, $\iota$ is injective). 
We emphasize that, even when $A$ and $B$ are 
$G$-modules, the map $\sigma$ is \emph{not} required
to be  $G$-equivariant.

\begin{defn}\label{relativelyinjective}
A $G$-module $E$ is \emph{relatively injective} if for every 
strongly injective $G$-morphism $\iota\colon A\rightarrow B$
of Banach $G$-modules and every $G$-morphism $\alpha\colon A\rightarrow E$
there is a $G$-morphism $\beta\colon B\rightarrow E$ satisfying $\beta\circ \iota=
\alpha$ and $\|\beta\|\leq \|\alpha\|$.
$$
\xymatrix{
0 \ar[r] & A  \ar[r]_{\iota} \ar[d]_{\alpha} & B \ar@{->}[dl]^{\beta} \ar@/_/[l]_{\sigma}\\
& E
}
$$
\end{defn}

\subsection{Resolutions}
A $G$-complex (or simply a \emph{complex}) is a sequence of 
$G$-modules $E^i$ and $G$-maps $\delta^i\colon
E^i\to E^{i+1}$ such that $\delta^{i+1}\circ\delta^i=0$ for every $i$, where $i$ runs over
$\matN\cup\{-1\}$:

$$
0\longrightarrow E^{-1}\tto{\delta^{-1}} E^0\tto{\delta^0} E^1\tto{\delta^1}\ldots
\tto{\delta^n} E^{n+1}\tto{\delta^{n+1}}\ldots
$$
Such a sequence will often be denoted by $(E^\ast,\delta^\ast)$.

A $G$-\emph{chain map} (or simply a \emph{chain map}) between $G$-complexes $(E^\ast,\delta_E^\ast)$ and $(F^\ast,\delta_F^\ast)$
is a sequence of $G$-maps $\{\alpha^i\colon E^i\to F^i\, |\, i\geq -1\}$
such that $\delta_F^i\circ\alpha^i=\alpha^{i+1}\circ\delta_E^{i}$ for every $i\geq -1$. 
If $\alpha^\ast,\beta^\ast$ are chain maps between $(E^\ast,\delta_E^\ast)$ 
and $(F^\ast,\delta_F^\ast)$ which coincide in degree $-1$, a 
\emph{$G$-homotopy} 
between $\alpha^\ast$ and $\beta^\ast$ is 
a sequence of $G$-maps $\{T^i \colon E^i\to F^{i-1}\, |\, i\geq 0\}$ such that
$\delta_F^{i-1}\circ T^i + T^{i+1}\circ \delta_E^i=\alpha^i-\beta^i$   for every $i\geq 0$,
and $T^0\circ \delta_E^{-1}=0$. We recall that, according to our definition
of $G$-maps, both chain maps between $G$-complexes
and $G$-homotopies between such chain maps have to be bounded 
in every degree.

A complex is \emph{exact} if $\delta^{-1}$ is injective and 
$\ker \delta^{i+1}={\rm Im}\, \delta^i$ for every $i\geq -1$.
A $G$-\emph{resolution} (or simply a \emph{resolution}) of a $G$-module  $E$ 
is an exact $G$-complex $(E^\ast,\delta^\ast)$ with $E^{-1}=E$.
A resolution 
$(E^\ast,\delta^\ast)$ is \emph{relatively injective} 
if $E^n$ is relatively injective for every $n\geq 0$.

A \emph{contracting homotopy} for a resolution $(E^\ast,\delta^\ast)$ is a 
sequence of linear maps $k^i\colon E^i\to E^{i-1}$ such that 
$\|k^i\|\leq 1$ for every $i\in\matN$, 
$\delta^{i-1}\circ k^i+
k^{i+1}\circ\delta^i = {\rm Id}_{E^i}$ if $i\geq 0$, and $k^0 \circ \delta^{-1}=
{\rm Id}_E$. 
$$
\xymatrix{
0\ar[r] & E^{-1}\ar[r]_{\delta^{-1}} & E^0 \ar[r]_{\delta^0} \ar@/_/[l]_{k^0} &
E^1  \ar[r]_{\delta^1} \ar@/_/[l]_{k^1} & \ldots \ar@/_/[l]_{k^2} \ar[r]_{\delta^{n-1}}&
E^{n}
\ar[r]_{\delta^n} \ar@/_/[l]_{k^n} & \ldots \ar@/_/[l]_{k^{n+1}}
}
$$

Note however that it is not required that 
$k^i$ be $G$-equivariant. A resolution is \emph{strong}
if it admits a contracting homotopy.

The following result can be proved by means of standard homological algebra arguments
(see~\cite{Ivanov}, \cite[Lemmas 7.2.4 and 7.2.6]{Monod}).

\begin{prop}\label{ext:prop}
Let $\alpha\colon E\to F$ be a $G$-map between 
$G$-modules,
let $(E^\ast,\delta_E^\ast)$ be a strong resolution of $E$, and suppose
$(F^\ast,\delta_F^\ast)$ is a $G$-complex such that $F^{-1}=F$ and $F^i$ is relatively injective
for every $i\geq 0$. 
Then $\alpha$ extends to a chain map $\alpha^\ast$, and any two extensions
of $\alpha$ to chain maps are $G$-homotopic.
\end{prop}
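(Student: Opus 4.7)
The plan is to prove existence of the chain map extension by induction on degree, and uniqueness up to $G$-homotopy by a very similar inductive argument. Both steps rest on the standard technique of turning the boundary operators $\delta_E^n$ into \emph{strongly injective} $G$-morphisms after quotienting by their kernel, so that relative injectivity of the $F^n$ can be invoked.

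\textbf{Construction of $\alpha^\ast$.} Set $\alpha^{-1}=\alpha$ and proceed by induction. Suppose $\alpha^n$ has been defined and commutes with the differentials in degrees $\leq n-1$. Consider the $G$-morphism $\psi^n=\delta_F^n\circ\alpha^n\colon E^n\to F^{n+1}$. Since $\psi^n\circ\delta_E^{n-1}=\delta_F^n\circ\delta_F^{n-1}\circ\alpha^{n-1}=0$, the morphism $\psi^n$ vanishes on $\operatorname{Im}\delta_E^{n-1}=\ker\delta_E^n$, and hence factors through the quotient $\overline{E}^n:=E^n/\ker\delta_E^n$ as a $G$-morphism $\overline{\psi}^n\colon\overline{E}^n\to F^{n+1}$. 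The induced map $\overline{\delta}_E^n\colon\overline{E}^n\hookrightarrow E^{n+1}$ is strongly injective: the contracting homotopy gives a norm-one left inverse, because for $y\in E^n$
$$k^{n+1}\bigl(\delta_E^n(y)\bigr)=y-\delta_E^{n-1}(k^n(y))\equiv y\pmod{\ker\delta_E^n}.$$
(Here one also checks that $\ker\delta_E^n$ is closed, so that $\overline{E}^n$ is a Banach $G$-module.) Applying relative injectivity of $F^{n+1}$ to the strongly injective $G$-morphism $\overline{\delta}_E^n$ and to $\overline{\psi}^n$, I obtain a $G$-morphism $\alpha^{n+1}\colon E^{n+1}\to F^{n+1}$ with $\alpha^{n+1}\circ\delta_E^n=\psi^n=\delta_F^n\circ\alpha^n$, completing the induction.

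\textbf{Uniqueness up to $G$-homotopy.} Let $\alpha^\ast$ and $\beta^\ast$ be two chain extensions of $\alpha$, and set $\gamma^\ast=\alpha^\ast-\beta^\ast$, so $\gamma^{-1}=0$ and $\gamma^\ast$ is a chain map. I construct $T^i\colon E^i\to F^{i-1}$ inductively so that $\delta_F^{i-1}\circ T^i+T^{i+1}\circ\delta_E^i=\gamma^i$ and $T^0\circ\delta_E^{-1}=0$. For $i=0$, since $\gamma^0\circ\delta_E^{-1}=\delta_F^{-1}\circ\gamma^{-1}=0$, $\gamma^0$ factors through $E^0/\operatorname{Im}\delta_E^{-1}$; applying relative injectivity of $F^0$ along the strongly injective $\overline{\delta}_E^0\colon E^0/\ker\delta_E^0\hookrightarrow E^1$ — no wait, for $T^1$ — produces $T^1$. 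More precisely, assuming $T^0,\ldots,T^i$ have been defined with the required identities, the $G$-morphism $\gamma^i-\delta_F^{i-1}\circ T^i\colon E^i\to F^i$ vanishes on $\operatorname{Im}\delta_E^{i-1}=\ker\delta_E^i$ (using the inductive identity in degree $i-1$ and the chain-map property of $\gamma^\ast$), hence factors through $\overline{E}^i=E^i/\ker\delta_E^i$; applying relative injectivity of $F^i$ along $\overline{\delta}_E^i\colon\overline{E}^i\hookrightarrow E^{i+1}$ yields $T^{i+1}$ satisfying the required formula.

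\textbf{Main obstacle.} The only subtle point is the verification that $\ker\delta_E^n$ is closed and that the quotient $\overline{E}^n$ really is a Banach $G$-module on which the formula $y\mapsto[k^{n+1}y]$ gives a well-defined norm-one $G$-\emph{non-equivariant} retraction of $\overline{\delta}_E^n$; once this splitting is in place, both the existence and uniqueness arguments reduce mechanically to an iterated application of Definition~\ref{relativelyinjective}. Everything else is standard homological bookkeeping.
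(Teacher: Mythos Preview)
Your argument is correct and is exactly the standard homological-algebra proof that the paper defers to by citing Ivanov and Monod; the paper gives no proof of its own, so there is nothing substantive to compare. Two minor presentational points: you should state explicitly that the homotopy induction starts with $T^0=0$ (which trivially satisfies $T^0\circ\delta_E^{-1}=0$), and the ``no wait'' aside should be cleaned up---the $i=0$ step is just the first instance of the general inductive step you describe immediately afterwards.
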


\subsection{Absolute bounded cohomology of groups}
If $E$ is a $G$-module, we denote by $E^G\subseteq E$ the submodule
of $G$-invariant elements in $E$.

Let $(E^\ast,\delta^\ast)$ be a relatively injective strong resolution of the
trivial $G$-module $\R$ (such a resolution exists, see Subsection~\ref{standard:subsec}).
Since coboundary maps are $G$-maps, they restrict to the $G$-invariant submodules of the
$E^i$'s. Thus $((E^\ast)^G,\delta^\ast |)$ is a subcomplex of $(E^\ast,\delta^\ast)$. 
A standard application of Proposition~\ref{ext:prop} now shows that the isomorphism type of
the homology
of $((E^\ast)^G,\delta^\ast|)$ does not depend on the chosen resolution (while the seminorm induced on 
such homology module by the norms on the $E^i$'s could depend on it).
What is more, there exists a canonical isomorphism between the homology of any two such resolutions, which is induced
by any extension of the identity of $\R$. 
For every $n\geq 0$, we now define the $n$-dimensional \emph{bounded cohomology}
module $H_b^n (G)$ of $G$ as follows: if $n\geq 1$, then 
$H_b^n (G)$ is the $n$-th homology module of the complex $((E^\ast)^G,\delta^\ast |)$, 
while if $n=0$ then $H_b^n (G)=\ker \delta^0\cong \R$. 


\subsection{The standard resolution}\label{standard:subsec}
For every $n\in\matN$, let $B^n (G)$ be the space of bounded real maps  on $G^{n+1}$.
We endow $B^n(G)$ 
with
the supremum norm and with the diagonal action of $G$ defined by
$(g\cdot f)(g_0,\ldots,g_n)=f(g^{-1} g_0,\ldots, g^{-1}g_n)$, thus defining
on $B^n(G)$ a structure of $G$-module.
For $n\geq 0$ we define $\delta^n \colon B^n (G)\to B^{n+1} (G)$ by setting:
$$
\delta^n (f) (g_0,g_1,\ldots,g_{n+1})=\sum_{i=0}^{n+1} (-1)^i f(g_0,\ldots,\widehat{g}_i,\ldots,g_{n+1}).
$$
Moreover, we let
$B^{-1} (G)=\R$
be the trivial  $G$-module, and 
we define ${\delta}^{-1}\colon \R\to {B}^0 (G)$ by
setting ${\delta}^{-1} (t) (g)=t$ for every $g\in G$.

The following result is proved in~\cite{Ivanov}:

\begin{prop}\label{ivanov1}
The complex
$(B^\ast (G),{\delta}^\ast)$ provides a relatively injective strong
resolution of the trivial $G$-module $\R$.
\end{prop}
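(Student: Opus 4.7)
The plan is to verify the three parts of the statement in turn: that $(B^\ast(G), \delta^\ast)$ is a $G$-complex, that it is a strong resolution of the trivial $G$-module $\matR$, and that each $B^n(G)$ is relatively injective. First I would observe that $\delta^{n+1} \circ \delta^n = 0$ by the usual alternating-sum cancellation, that each $\delta^n$ is $G$-equivariant (immediate from the diagonal action) and bounded (with $\|\delta^n\| \leq n+2$), and that the augmentation $\delta^{-1} \colon \matR \to B^0(G)$ sending $t$ to the constant function $t$ is $G$-equivariant with $\|\delta^{-1}\| \leq 1$.

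To establish strongness (and thereby exactness) simultaneously, I would fix the identity $e \in G$ and define a contracting homotopy by setting
$$k^n(f)(g_0, \ldots, g_{n-1}) = f(e, g_0, \ldots, g_{n-1}) \text{ for } n \geq 1, \qquad k^0(f) = f(e).$$
The bound $\|k^n\| \leq 1$ is immediate. A direct computation unfolding the alternating sums shows that in $\delta^{n-1} k^n(f) + k^{n+1} \delta^n(f)$ every term involving an evaluation at $e$ cancels in pairs, leaving precisely $f(g_0, \ldots, g_n)$; the identity $k^0 \circ \delta^{-1} = \id_{\matR}$ is trivial. Note that $k^n$ is not (and need not be) $G$-equivariant, which is precisely what makes such an explicit choice possible.

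The core of the argument is relative injectivity of each $B^n(G)$. Given a strongly injective $G$-morphism $\iota \colon A \to B$ with bounded (non-equivariant) section $\sigma \colon B \to A$ of norm at most $1$, and a $G$-morphism $\alpha \colon A \to B^n(G)$, my plan is to define $\beta \colon B \to B^n(G)$ by
$$\beta(b)(g_0, g_1, \ldots, g_n) = \alpha\bigl(g_0 \cdot \sigma(g_0^{-1} \cdot b)\bigr)(g_0, g_1, \ldots, g_n).$$
When $b = \iota(a)$, the $G$-equivariance of $\iota$ together with $\sigma \circ \iota = \id_A$ yields $g_0 \cdot \sigma(g_0^{-1} \cdot \iota(a)) = a$, hence $\beta \circ \iota = \alpha$. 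The $G$-equivariance of $\beta$ follows by a short manipulation: replacing $b$ by $h \cdot b$ and simplifying, the substitution $g_0 \mapsto h^{-1} g_0$ inside the argument of $\alpha$ cancels, via the $G$-equivariance of $\alpha$, against the shift $(g_0, \ldots, g_n) \mapsto (h^{-1} g_0, \ldots, h^{-1} g_n)$. Finally, $|\beta(b)(g_0, \ldots, g_n)| \leq \|\alpha\| \cdot \|\sigma(g_0^{-1} b)\| \leq \|\alpha\| \cdot \|b\|$, since the $G$-action on $B$ is isometric and $\|\sigma\| \leq 1$, so $\|\beta\| \leq \|\alpha\|$.

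The main obstacle is devising the formula for $\beta$: because $\sigma$ is not $G$-equivariant, one cannot simply set $\beta = \alpha \circ \sigma$. The definition above exploits the ``free'' first coordinate $g_0$ of the cochain $\beta(b)$ to symmetrize $\sigma$ against the orbit of $b$ on the fly; once this symmetrization trick is in place, each remaining verification reduces to a short direct check.
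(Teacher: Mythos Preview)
Your proof is correct and is precisely the standard argument due to Ivanov that the paper cites without reproducing: the contracting homotopy given by inserting the identity element in the first slot, and the extension $\beta$ obtained by ``twisting'' $\sigma$ with the first coordinate $g_0$, are exactly Ivanov's constructions. There is nothing to add or to compare.
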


The resolution $(B^\ast (G),\delta^\ast)$ 
is usually known
as the \emph{standard resolution of the trivial $G$-module $\R$}. The seminorm 
induced on $H_b^\ast(G)$ by the standard resolution 
is called the \emph{canonical seminorm}. It is shown in~\cite{Ivanov}
that the canonical seminorm coincides with the infimum of all the seminorms
induced on $H_b^\ast(G)$ by any relatively injective strong resolution
of the trivial $G$-module $\R$ (see also Proposition~\ref{canonical seminorm} below).


\subsection{Relative bounded cohomology of groups}
Let $A$ be a subgroup of $G$. Henceforth, whenever $E$ is a $G$-module
we  understand that $E$ is endowed also with the natural structure of
$A$-module induced by the inclusion of $A$ in $G$.

\begin{defn}[Definitions 3.1 and 3.5 in \cite{Park}] \label{relativebc} 
Let $(U^\ast,\delta^\ast_U)$ be a relatively injective strong 
$G$-resolution of the trivial $G$-module $\matR$ and $(V^\ast,\delta^\ast_V)$ be a
relatively injective strong $A$-resolution of
the trivial $A$-module $\matR$. 
By Proposition~\ref{ext:prop}, the identity of $\R$ may be extended
to an $A$-chain map
$\lambda^\ast\colon U^\ast\rightarrow V^\ast$. The pair 
of resolutions $(U^\ast,\delta^\ast_U)$, $(V^\ast,\delta^\ast_V)$, together
with the chain map $\lambda^\ast$, provides a \emph{pair of resolutions for $(G,A;\R)$}.
We say that such a pair is
\begin{enumerate}
\item 
\emph{allowable}, if the chain map $\lambda^\ast$ commutes with 
the contracting homotopies of $(U^\ast,\delta^\ast_U)$ and
$(V^\ast,\delta^\ast_V)$;
\item
\emph{proper}, if the map
$\lambda^n$ restricts to a surjective map
$\widehat{\lambda}^n\colon(U^n)^G\rightarrow (V^n)^A$
for every $n\in\matN$.
\end{enumerate}
We denote by
$\ker (U^n \to V^n)$  the kernel of $\lambda^n$. 
It is readily seen that
the module $\ker (U^n\to V^n)^G\subseteq (U^n)^G$ coincides   
with the kernel of $\widehat{\lambda}^n$.
\end{defn}

If the pair of resolutions $(U^\ast,\delta^\ast_U)$, $(V^\ast,\delta^\ast_V)$ is proper,
there exists an exact sequence
$$
\xymatrix{
0\ar[r] & \ker (U^n\to V^n)^G\ar[r]& (U^n)^G\ar[r]^{\widehat{\lambda}^n} & (V^n)^A\ar[r] & 0 \ ,
}
$$
which induces the long exact sequence
\begin{equation*}
\xymatrix{
\ldots \ar[r] & H^{n-1}_b (A)\ar[r] & H^n (\ker (U^\ast \to V^\ast)^G)\ar[r] & H^n_b (G)\ar[r] & H_b^n(A)\ar[r]& \ldots
}
\end{equation*}

As observed in~\cite{Park}, if the pair
$(U^\ast,\delta^\ast_U)$, $(V^\ast,\delta^\ast_V)$ is also allowable, then
the isomorphism type of 
$H^n (\ker (U^\ast \to V^\ast)^G)$ does not depend
on the chosen proper allowable pair of resolutions 
(see also Proposition~\ref{canonical seminorm} below). Such a module is called the 
\emph{$n$-th bounded cohomology group of the pair $(G,A)$}, and it is denoted 
by $H_b^n(G,A)$. 

\subsection{The standard pair of resolutions}\label{canpair}
The following result is proved in~\cite[Propositions 3.1 and 3.18]{Park}, and shows 
that,
just as in the absolute case, there exists a canonical proper allowable pair
of resolutions for $(G,A;\R)$.

\begin{prop}
The standard resolutions $B^*(G)$ and $B^*(A)$
of the trivial $G$- and $A$-module $\mathbb{R}$, together with
the obvious restriction map $B^*(G)\to B^*(A)$,   
provide a proper allowable pair of resolutions for $(G,A;\matR)$. 
\end{prop}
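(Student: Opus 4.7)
The plan is to verify, in order, the three conditions required by Definition~\ref{relativebc}: that the obvious restriction furnishes an $A$-chain map extending $\id_\R$, that the resulting pair of resolutions is allowable, and finally that it is proper. By Proposition~\ref{ivanov1}, applied both to $G$ and to $A$, the complexes $B^\ast(G)$ and $B^\ast(A)$ are already known to be relatively injective strong resolutions of the respective trivial modules, so the task reduces to these three compatibility checks.

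First, I define $\lambda^n\colon B^n(G)\to B^n(A)$ by $\lambda^n(f)=f|_{A^{n+1}}$ for $n\geq 0$, and set $\lambda^{-1}=\id_\R$. The formula for $\delta^n$ merely omits entries of a tuple, hence commutes with restriction to $A^{n+1}$; similarly, the diagonal $G$-action on $B^n(G)$, restricted to $A$, is left multiplication of $A$ on the first $n+1$ coordinates, which is compatible with restriction. Thus $\lambda^\ast$ is an $A$-chain map extending $\id_\R$.

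Next, the allowability check is straightforward. The standard contracting homotopy on $B^\ast(G)$ is $(k_G^n f)(g_0,\ldots,g_{n-1})=f(e,g_0,\ldots,g_{n-1})$ for $n\geq 1$, with $k_G^0(f)=f(e)$ in degree $0$; the same formulas with $a_i\in A$ define $k_A^\ast$ on $B^\ast(A)$. Since $e\in A$, evaluation at a tuple beginning with $e$ commutes with restriction to $A^{n+1}$, yielding $k_A^n\circ \lambda^n=\lambda^{n-1}\circ k_G^n$ for every $n\geq 0$.

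The only step with genuine content is properness, i.e., the surjectivity of $\widehat{\lambda}^n\colon (B^n(G))^G\to (B^n(A))^A$. The key observation is that a bounded function $f\colon G^{n+1}\to\R$ is $G$-invariant if and only if it has the form $f(g_0,\ldots,g_n)=F(g_0^{-1}g_1,\ldots,g_0^{-1}g_n)$ for a bounded $F\colon G^n\to\R$: indeed, in the invariance condition $f(gg_0,\ldots,gg_n)=f(g_0,\ldots,g_n)$ take $g=g_0^{-1}$, and set $F(g_1',\ldots,g_n')=f(e,g_1',\ldots,g_n')$. The analogous statement for $A$ gives a norm-preserving bijection between $(B^n(A))^A$ and bounded maps $A^n\to\R$, and $\widehat{\lambda}^n$ corresponds at this level to the restriction $F\mapsto F|_{A^n}$. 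Given any $h\in (B^n(A))^A$ with corresponding $H\colon A^n\to\R$, I extend $H$ to a bounded $F\colon G^n\to\R$ by declaring it equal to $H$ on $A^n$ and zero elsewhere; the associated $f\in (B^n(G))^G$ then satisfies $\widehat{\lambda}^n(f)=h$. Since this extension is explicit and preserves the supremum norm, no real obstacle arises and the proposition follows.
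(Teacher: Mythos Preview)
Your proof is correct. The paper itself does not supply a proof of this proposition but simply attributes it to \cite[Propositions 3.1 and 3.18]{Park}; your argument is precisely the natural direct verification one would expect, and each of the three checks (chain map, allowability via the contracting homotopy $k^n(f)(g_0,\ldots,g_{n-1})=f(e,g_0,\ldots,g_{n-1})$, and properness via extension by zero on $G^n\setminus A^n$) is carried out cleanly.
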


The seminorm induced on $H_b^\ast(G,A;\R)$ by this resolution is called the
\emph{canonical seminorm}.
In order to save some words, from now on we fix the following notation:
$$
B^n(G,A)= \ker (B^n(G)\to B^n(A)).
$$

\subsection{Morphisms of pairs of resolutions}\label{morphisms}
Let $(U^*,\delta^\ast_U)$, $(V^*,\delta^\ast_V)$ and
$(E^*,\delta^\ast_E)$, $(F^*,\delta^\ast_F)$ be pairs of resolutions
for $(G,A;\R)$. A \emph{morphism} between such pairs is a pair 
of chain maps $(\alpha_G^\ast,\alpha_A^\ast)$ such that:
\begin{enumerate}
 \item 
$\alpha_G^\ast\colon U^\ast\to E^\ast$ (resp.~$\alpha_A^\ast\colon V^\ast\to F^\ast$)
is a $G$-chain map (resp.~an $A$-chain map)
extending the identity of $\R=U^{-1}=E^{-1}$
(resp.~the identity of $\R=V^{-1}=F^{-1}$);
\item
for every $n\in\matN$, the following diagram commutes
\begin{equation*}
\xymatrix{
U^n \ar[r] \ar[d]^{\alpha^n_G} &  V^n \ar[d]^{\alpha^n_A}\\
E^n \ar[r]  & F^n\ ,
}
\end{equation*}
where the horizontal rows represent the $A$-morphisms  involved in the definition
of a pair of resolutions.
\end{enumerate}

By condition~(2), if $(\alpha^\ast_G,\alpha^\ast_A)$ is a morphism of pairs of resolutions,
then $\alpha^\ast_G$ restricts to a chain map
$$
\alpha_{G,A}^\ast\colon \ker (U^\ast\to V^\ast)\to
\ker(E^\ast\to F^\ast)\ ,
$$
which induces in turn a map 
$$
H^\ast(\alpha^*_{G,A})\colon H^\ast(\ker(U^\ast\to V^\ast)^G)\to H^\ast(\ker(E^\ast\to F^\ast)^G)\ .
$$

\begin{prop}\label{buona:prop}
If the pairs of resolutions
$(U^*,\delta^\ast_U)$, $(V^*,\delta^\ast_V)$ and
$(E^*,\delta^\ast_E)$, $(F^*,\delta^\ast_F)$ are proper,
the map $H^\ast (\alpha^\ast_{G,A})$ is an isomorphism.
\end{prop}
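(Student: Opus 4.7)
The plan is to build a commutative ladder of long exact sequences in cohomology from the two pairs of resolutions and then apply the Five Lemma. By properness, the discussion preceding Proposition~\ref{canpair} supplies short exact sequences of cochain complexes
\begin{equation*}
0 \longrightarrow \ker(U^\ast \to V^\ast)^G \longrightarrow (U^\ast)^G \stackrel{\widehat{\lambda}^\ast}{\longrightarrow} (V^\ast)^A \longrightarrow 0
\end{equation*}
and an entirely analogous one built from $(E^\ast,F^\ast)$. The associated long exact sequences have outer terms $H^{n-1}_b(A)$, $H^n_b(G)$, $H^n_b(A)$ and middle terms $H^n(\ker(U^\ast\to V^\ast)^G)$, $H^n(\ker(E^\ast\to F^\ast)^G)$, namely the two cohomology groups we wish to compare.

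Condition~(2) in the definition of a morphism of pairs of resolutions guarantees that $\alpha_G^\ast$ restricts to the chain map $\alpha_{G,A}^\ast$ between the kernels, and that $\alpha_{G,A}^\ast$, together with the restrictions of $\alpha_G^\ast$ to $(U^\ast)^G$ and of $\alpha_A^\ast$ to $(V^\ast)^A$, assembles into a morphism of short exact sequences of cochain complexes. Naturality of the connecting homomorphism then yields the desired commutative ladder of long exact sequences, in which the outer vertical arrows are $H^n((\alpha_G^\ast)|_{(U^\ast)^G})$ and $H^n((\alpha_A^\ast)|_{(V^\ast)^A})$, while the middle one is $H^n(\alpha_{G,A}^\ast)$.

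Next I would invoke Proposition~\ref{ext:prop}: since $U^\ast$ and $E^\ast$ are both relatively injective strong $G$-resolutions of the trivial $G$-module $\matR$ and $\alpha_G^\ast$ extends $\id_\matR$, any other chain-map extension of $\id_\matR$ is $G$-homotopic to $\alpha_G^\ast$; as $G$-maps restrict to $G$-invariants, such a $G$-homotopy descends to a chain homotopy of $((U^\ast)^G,\delta^\ast|)$ and $((E^\ast)^G,\delta^\ast|)$, so $H^n(\alpha_G^\ast|)$ coincides with the canonical isomorphism between the two realisations of $H_b^n(G)$. The identical argument, run with $A$ in place of $G$, shows that $H^n(\alpha_A^\ast|)$ is the canonical isomorphism onto $H_b^n(A)$. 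Applying the Five Lemma to the ladder now forces $H^n(\alpha_{G,A}^\ast)$ to be an isomorphism for every $n$, as required.

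The argument is essentially bookkeeping and a single diagram chase; the only place where a substantive input enters is the uniqueness statement in Proposition~\ref{ext:prop}, which identifies the outer vertical arrows with canonical isomorphisms. The main subtlety to double-check, though ultimately automatic, is the descent of $G$- and $A$-homotopies to the invariant subcomplexes so that the Five Lemma can be applied degree-by-degree; no further obstacle is expected.
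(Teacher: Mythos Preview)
Your proposal is correct and follows essentially the same approach as the paper: properness yields a commutative ladder of long exact sequences, Proposition~\ref{ext:prop} makes the outer vertical arrows isomorphisms, and the Five Lemma finishes the argument. The paper's proof is terser, but your additional remarks on naturality of the connecting homomorphism and on the descent of $G$- and $A$-homotopies to invariants are exactly the details one would fill in.
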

\begin{proof}
Our hypothesis ensures that we have the commutative
diagram:
$$
\xymatrix{
\ldots 
H^{n-1} ((V^\ast)^A)\ar[r]\ar[d]^{H^{n-1}(\alpha^\ast_A)} 
& H^n (\ker (U^\ast \to V^\ast)^G)\ar[r]\ar[d]^{H^n(\alpha^\ast_{G,A})} & H^n ((U^\ast)^G)\ar[r]\ar[d]^{H^n(\alpha_G^\ast)} & H^n((V^\ast)^A)\ar[d]^{H^n(\alpha^\ast_A)}\ldots \\ 
\ldots 
H^{n-1} ((F^\ast)^A)\ar[r] & H^n (\ker (E^\ast \to F^\ast)^G)\ar[r] & H^n ((E^\ast)^G)\ar[r] & H^n((F^\ast)^A)\ldots 
}
$$
By Proposition~\ref{ext:prop}, the vertical arrows corresponding to
$H^\ast(\alpha^\ast_G)$ and $H^\ast(\alpha^\ast_A)$ are isomorphisms,
so the conclusion follows from the 
Five Lemma.
\end{proof}

\begin{rem}
At the moment we are not able to prove neither that every two proper allowable pairs
of resolutions for $(G,A;\R)$ are related by a morphism of pairs of resolutions,
nor that any two such morphisms induce the same map in cohomology. In fact, 
whenever two proper allowable pairs of resolutions are given, using Proposition~\ref{ext:prop}
one can easily construct the needed chain maps $\alpha^\ast_G$ and $\alpha^\ast_A$. However, some troubles arise
in proving that such chain maps can be chosen so to fulfill
condition~(2) in the above definition of morphism of pairs of resolutions.
Despite these difficulties, the results proved in Propositions~\ref{buona:prop}
and~\ref{canonical seminorm} are sufficient to our purposes.

Also observe that in the statement of Proposition~\ref{buona:prop}
we do not require the involved pairs of resolutions
to be allowable.
However, allowability plays a fundamental r\^ole in constructing
a morphism of pairs of resolutions between any generic proper allowable
pair of resolutions and the standard pair of resolutions (see Proposition~\ref{canonical seminorm} below), 
and in getting explicit bounds on the norm of such a morphism. 
\end{rem}

The following result shows that, just as in the absolute case, the bounded cohomology
of $(G,A)$ is computed by any proper allowable pair of resolutions for $(G,A;\R)$. Moreover,
the canonical seminorm coincides with the infimum of all the seminorms induced
on $H_b^\ast(G,A)$ by any such pair of resolutions.

\begin{prop}\label{canonical seminorm}
Let $(U^\ast,\delta_U^\ast)$, $(V^\ast,\delta_V^\ast)$ be a proper allowable
pair of resolutions for $(G,A;\R)$. Then there exists a morphism 
$(\alpha_G^\ast,\alpha_A^\ast)$ between this pair of resolutions and the canonical pair 
of resolutions introduced in Subsection~\ref{canpair}.
Moreover, 
one may choose $\alpha^*_G$, $\alpha^*_A$ in such a way that
the induced map $$H^\ast(\alpha_{G,A}^*)\colon H^\ast(\ker(U^\ast\to V^\ast)^G)\to H^\ast(B^\ast(G,A)^G)\cong H_b^*(G,A)$$
is a norm non-increasing isomorphism.
\end{prop}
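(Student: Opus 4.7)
The plan is to construct the chain maps $\alpha^*_G\colon U^* \to B^*(G)$ and $\alpha^*_A\colon V^* \to B^*(A)$ simultaneously, using the standard inductive extension procedure that witnesses the relative injectivity of $B^*(G)$ and $B^*(A)$. The resolution $(U^*, \delta_U^*)$ is strong, hence admits a contracting homotopy $k_U^*$ with $\|k_U^n\| \leq 1$; Proposition~\ref{ext:prop} then provides a $G$-chain map $\alpha_G^*$ extending the identity of $\R$. A careful look at the inductive step shows that, in each degree, the extension is built by invoking the universal property of relative injectivity on a map assembled from $\alpha_G^{n-1}$ and $k_U^n$, and that the resulting map satisfies $\|\alpha_G^n\| \leq 1$. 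Applying the same recipe to $(V^*, \delta_V^*)$ with target $B^*(A)$ yields an $A$-chain map $\alpha_A^*$ with $\|\alpha_A^n\| \leq 1$ in every degree.

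The main obstacle is verifying condition~(2) in the definition of a morphism of pairs of resolutions, that is, the commutativity on the nose of the square involving $\lambda^*\colon U^*\to V^*$, the restriction map $B^*(G)\to B^*(A)$, $\alpha_G^*$ and $\alpha_A^*$. This is precisely where the allowability hypothesis enters. By allowability, $\lambda^*$ intertwines the contracting homotopies $k_U^*$ and $k_V^*$. On the target side, a direct computation shows that the natural contracting homotopy of $B^*(G)$, given by evaluation at the neutral element in the first coordinate, commutes with the restriction to $B^*(A)$, because $A$ and $G$ share the same neutral element. Since the inductive formula defining $\alpha_G^n$ and $\alpha_A^n$ involves only the contracting homotopy of the source and the structure of the target at each step, a straightforward induction on $n$ yields the required identity, so that $(\alpha_G^*, \alpha_A^*)$ is indeed a morphism of pairs of resolutions in the sense of Subsection~\ref{morphisms}.

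To conclude, we observe that the induced map
$$H^*(\alpha_{G,A}^*)\colon H^*(\ker(U^*\to V^*)^G)\longrightarrow H^*(B^*(G,A)^G)\cong H_b^*(G,A)$$
is an isomorphism by Proposition~\ref{buona:prop}, since both pairs of resolutions are proper. For the norm statement, observe that by construction $\alpha_{G,A}^*$ is the restriction of $\alpha_G^*$ to the subcomplex $\ker(U^*\to V^*)$, and hence inherits the bound $\|\alpha_{G,A}^n\|\leq 1$ in each degree. This bound survives the passage to $G$-invariants and then to cohomology, so $H^*(\alpha_{G,A}^*)$ is norm non-increasing, as required. In particular, this shows that the canonical seminorm on $H_b^*(G,A)$ is bounded above by the seminorm induced by any proper allowable pair of resolutions, and therefore equals the infimum of all such seminorms.
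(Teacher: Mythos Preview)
Your proposal is correct and follows essentially the same route as the paper: build $\alpha_G^*$ and $\alpha_A^*$ via the standard inductive extension using the contracting homotopies of $U^*$ and $V^*$, check that each is norm non-increasing, use allowability to make the square with $\lambda^*$ and the restriction $B^*(G)\to B^*(A)$ commute, and then invoke Proposition~\ref{buona:prop}. The paper is simply more explicit: it writes down the closed formula
\[
\alpha_G^n(f)(g_0,\dots,g_n)=\alpha_G^{n-1}\bigl(g_0\cdot k_U^n(g_0^{-1}\cdot f)\bigr)(g_1,\dots,g_n)
\]
(and the analogous one for $\alpha_A^n$), from which the commutativity of the square follows by a one-line induction using only allowability and the $A$-equivariance of $\lambda^*$. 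One small remark: your aside about the contracting homotopy of $B^*(G)$ (evaluation at the neutral element) being compatible with restriction to $B^*(A)$ is true, but it plays no role in the argument; the explicit formula shows that only the \emph{source} contracting homotopies enter, together with the group action. The ``structure of the target'' you refer to is really just the fact that $B^n(G)$ consists of functions on $G^{n+1}$, so one can evaluate at tuples, and that restriction to $A^{n+1}$ is the map to $B^n(A)$.
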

\begin{proof}
Let $k_G^\ast$ (resp.~$k_A^\ast$) be the contracting homotopy of $(U^\ast,\delta_U^\ast)$
(resp.~of $(V^\ast,\delta_V^\ast)$).
Let us define $\alpha^n_G$ and $\alpha^n_A$ by induction as follows:
\begin{equation}\label{can:eq}
 \begin{array}{ccccc}
 \alpha_G^n(f)(g_0,\dots,g_n)&=&\alpha_G^{n-1}(g_0(k_G^n g_0^{-1}(f)))(g_1,\dots,g_n)
&\in& \matR\ ,\\
\alpha_A^n(f)(g_0,\dots,g_n)&=&\alpha_A^{n-1}(g_0(k_A^n g_0^{-1}(f)))(g_1,\dots,g_n)
&\in& \matR\ .
\end{array}
\end{equation}
It is easily seen that
$\alpha_G^\ast$ (resp.~$\alpha_A^\ast$) is indeed 
a $G$-chain map (resp.~an $A$-chain map) which is norm non-increasing in every
degree. Moreover, since the chain map $U^\ast\to V^\ast$ commutes with the contracting
homotopies of $(U^\ast,\delta^\ast_U)$ and $(V^\ast,\delta_V^\ast)$, the following
diagram commutes:
$$
\xymatrix{
U^n \ar[r] \ar[d]^{\alpha^n_G} & V^n \ar[d]^{\alpha^n_A}\\
B^n(G)\ar[r] & B^n(A)\ .
}
$$
This implies that $(\alpha^\ast_G,\alpha^\ast_A)$ is a morphism
of pairs of resolutions.
The conclusion follows from Proposition~\ref{buona:prop}.
\end{proof}

\section{Relative (continuous) bounded cohomology of spaces}\label{boundedco2:sec}
Throughout the whole section we denote by $(X,W)$
a countable CW-pair such that both $X$ and $W$ are connected. We also make
the assumption that the inclusion of $W$ in $X$ induces an injective
map on fundamental groups.

Being locally contractible, the space $X$ admits a universal covering
$p\colon \wdtX\rightarrow X$. We denote by $\wdtW$ a fixed connected
component of $p^{-1}(W)\subseteq \wdtX$. We also choose a basepoint
$b_0\in\wdtW$. This choice determines a canonical isomorphism
between $\pi_1(X,p(b_0))$ and the group $G$ of the covering
automorphisms of $\wdtX$. We denote by $A\subseteq G$ the subgroup
corresponding to $i_\ast(\pi_1(W,p(b_0)))$ under this isomorphism, where
$i\colon W\to X$ is the inclusion.
Observe that $A$ coincides with the group of automorphisms of $\wdtX$
that leave $\wdtW$ invariant.
In particular, for every $n\in\matN$ the module
$C^n_b(\wdtX)$ (resp.~$C^n_b(\wdtW)$)
admits a natural structure of $G$-module (resp.~$A$-module). 
Moreover, the covering projection $p\colon \wdtX\to X$ defines
a pull-back map $p^\ast\colon C_b^\ast(X,W)\to C_b^\ast(\wdtX,\wdtW)$ which induces
in turn an isometric isomorphism $C_b^\ast(X,W)\to C_b^\ast(\wdtX,\wdtW)^G$.
As a consequence,
we get the
natural identification
$$
H_b^\ast(X,W)\cong H^\ast(C_b^\ast(\wdtX,\wdtW)^G)\ .
$$

\smallskip

The straightening procedure described in Section~\ref{convex:sec}
shows that,
when $(X,W)$ is a locally convex pair of metric spaces, 
in order to compute 
the relative singular homology of $(X,W)$ one may replace
the singular complex $C_*(X,W)$ with the subcomplex of straight
chains. As a consequence, it is easily seen that 
in order to compute the cohomology (resp.~the bounded cohomology)
of $(X,W)$ one may replace the complex
$C^*(\wdtX,\wdtW)^G$ (resp.~$C_b^*(\wdtX,\wdtW)^G$) 
with the subcomplex of those invariant cochains
whose value on each simplex only depends on the vertices of the simplex
(recall that straight simplices in $\wdtX$ only depend on their vertices).
Following Gromov~\cite{Gromov}, we say that any such cochain is \emph{straight}.

Observe that the definition of straight cochain
makes sense even when it is not possible to properly define
a straightening on singular chains. Let us briefly describe
some known results about straight cochains in the absolute case
(\emph{i.e.}~when $W=\emptyset$). 
If $\wdtX$ is contractible, a classical result
ensures that both straight cochains and singular cochains
compute the cohomology of $G$, so
the cohomology of straight
cochains is isomorphic to the singular cohomology of $X$.
An important result 
by Gromov 
(see~\cite[Section 2.3]{Gromov} and~\cite[Theorem 4.4.1]{Ivanov}) 
shows that the same is true for bounded cohomology, even without the assumption
that $\wdtX$ is contractible. More precisely, both bounded straight cochains
and bounded singular cochains compute the bounded cohomology of $G$,
and they both induce the canonical seminorm on $H^*_b(G)$, 
so the cohomology of bounded straight cochains
is isometrically isomorphic to the bounded cohomology of $X$.
Moreover, Monod proved in~\cite[Theorem 7.4.5]{Monod}
that the bounded cohomology of $G$ (whence of $X$)
is computed also by
\emph{continuous} bounded straight cochains.
Monod's result plays a fundamental
r\^ole in L\"oh's description of 
the isometric isomorphism between measure homology and singular
homology in the absolute case.

In this section we show that, 
in the case when $W\neq\emptyset$, continuous bounded straight cochains 
compute the bounded cohomology of the pair $(G,A)$, thus extending
Monod's result to the relative case (see Theorem~\ref{isometric2:iso}).

Moreover, in the case when the pair $(X,W)$ is good we prove that
also $H_b^*(X,W)$ is isometrically isomorphic to $H_b^*(G,A)$, thus obtaining
that the bounded cohomology of $(X,W)$ is computed by continuous bounded straight cochains.
Finally, in Subsection~\ref{mainproof:sub} we show that this result easily implies
our Theorem~\ref{teo2}.


\subsection{Bounded cochains v.s.~continuous bounded straight cochains}\label{pairs:sub}
Let us give the precise definition of the complex of continuous bounded straight cochains.
For every $n\in\matN$ we consider the following Banach spaces:
$$
\begin{array}{lll}
C_{cbs}^n (\wdtX)&=&\{f\colon \wdtX^{n+1}\to \R,\ f\ {\rm continuous\ and\ bounded}\}\ ,\\
C_{cbs}^n (\wdtW)&=&\{f\colon \wdtW^{n+1}\to \R,\ f\ {\rm continuous\ and\ bounded}\}\ ,
\end{array}
$$
both endowed with the supremum norm. 
The diagonal $G$-action
such that $g\cdot f(x_0,\dots,x_n)=f(g^{-1}x_0 ,\dots,g^{-1}x_n)$ for every $g \in G$
endows $C^n_{cbs}(\wdtX)$ with a structure of $G$-module. 
The obvious coboundary maps
$$
\delta^n\colon C_{cbs}^n(\wdtX)\to C_{cbs}^{n+1}(\wdtX)\ ,\quad
\delta^n(f)(x_0,\ldots,x_{n+1})=\sum_{i=0}^{n+1}
(-1)^i f(x_0,\ldots,\widehat{x}_i,\ldots,x_{n+1})
$$
define on $C_{cbs}^\ast(\wdtX)$ a structure of $G$-complex. In the very same way
one endows $C_{cbs}^* (\wdtW)$ with a structure of $A$-complex.
For every $n\in\matN$, the inclusion $\wdtW^{n+1}\hookrightarrow \wdtX^{n+1}$ induces an obvious
restriction $C_{cbs}^n(\wdtX)\to C_{cbs}^n(\wdtW)$, whose kernel will be denoted by
$C_{cbs}^n(\wdtX,\wdtW)$. Finally, 
for every $n\in\matN$ we set
\begin{equation}\label{cbs}
H^n_{cbs} (X,W)=H^n(C^*_{cbs}(\wdtX,\wdtW)^G)\ .
\end{equation}

We will prove in Propositions~\ref{straightok} and~\ref{standardok}
that both $C_b^*(\wdtX)$, $C_b^*(\wdtW)$ and
$C_{cbs}^*(\wdtX)$, $C_{cbs}^*(\wdtW)$
provide proper pairs of resolutions for $(G,A;\R)$.
The pair of norm non-increasing chain maps
\begin{equation}\label{eta:eq}
\begin{array}{ll}
\eta_G^\ast\colon C_{cbs}^\ast (\widetilde{X})\to C_{b}^\ast (\wdtX),\ &
\eta_G^n (f)(\sigma)=f(\sigma(e_0),\ldots,\sigma (e_n))\ ,\\
\eta_A^\ast\colon C_{cbs}^\ast (\widetilde{W})\to C_{b}^\ast (\wdtW),\ & 
\eta_A^n (f)(\sigma)=f(\sigma(e_0),\ldots,\sigma (e_n))
\end{array}
\end{equation}
allows us to identify 
$C_{cbs}^\ast(\wdtX)$ (resp.~$C_{cbs}^\ast(\wdtW)$)
with the subcomplex of 
$C_b^*(\wdtX)$ (resp.~of $C_b^*(\wdtW)$)
of continuous bounded straight cochains on $\wdtX$ (resp.~on $\wdtW$).
Moreover, 
it is readily seen that the pair $(\eta_G^*,\eta_A^*)$ is a morphism of resolutions. Therefore, Proposition~\ref{canonical seminorm} implies that the induced map
in cohomology
$$
H^*(\eta_{G,A}^*)\colon H_{cbs}^*(X,W)=H^*(C^*_{cbs}(\wdtX,\wdtW)^G)\to H^*(C^*_b(\wdtX,\wdtW)^G)=H_b^*(X,W)
$$
is a norm non-increasing isomorphism.

Under the assumption that the pair $(X,W)$ is good, 
we are able to prove
that this isomorphism is in fact an isometry:

\begin{teo}\label{isometric:iso}
Suppose that $(X,W)$ is good. Then for every $n\in\matN$ the map
$$
H^n(\eta_{G,A}^*)\colon H_{cbs}^n(X,W)\to H_b^n(X,W)
$$
is an isometric isomorphism.
\end{teo}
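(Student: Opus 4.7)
Since $H^n(\eta^*_{G,A})$ is already established to be a norm non-increasing isomorphism in the discussion preceding the statement, the content of the theorem is the opposite inequality
\[
\|[\alpha]\|_\infty \;\leq\; \|H^n(\eta^*_{G,A})[\alpha]\|_\infty \qquad \text{for every } [\alpha]\in H^n_{cbs}(X,W),
\]
or equivalently, that the inverse of $H^n(\eta^*_{G,A})$ is also norm non-increasing. The plan is to route the argument through the bounded cohomology of the pair of groups $H^n_b(G,A)$, which is the common target of the two pairs of resolutions at hand.

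By Propositions~\ref{straightok} and~\ref{standardok}, both $(C^*_b(\wdtX),C^*_b(\wdtW))$ and $(C^*_{cbs}(\wdtX),C^*_{cbs}(\wdtW))$ are proper allowable pairs of resolutions for $(G,A;\R)$. Applying Proposition~\ref{canonical seminorm} to each of them yields morphisms $\alpha_b$ and $\alpha_{cbs}$ into the standard pair $(B^*(G),B^*(A))$ whose induced maps in cohomology
\[
\alpha_b^*\colon H^n_b(X,W)\longrightarrow H^n_b(G,A), \qquad \alpha_{cbs}^*\colon H^n_{cbs}(X,W)\longrightarrow H^n_b(G,A)
\]
are norm non-increasing isomorphisms. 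Using the goodness hypothesis (which forces $\wdtW\subseteq\wdtX$ via $\pi_1$-injectivity, so that a single basepoint $b_0\in\wdtW$ sits simultaneously in $\wdtX$), the contracting homotopies on all four complexes can be chosen to be cone-type constructions based at $b_0$; since $\eta$ intertwines these cone contractions by construction, the inductive formula~\eqref{can:eq} produces $\alpha_b$ and $\alpha_{cbs}$ satisfying $\alpha_b\circ\eta=\alpha_{cbs}$, and thus $\alpha_b^*\circ H^n(\eta^*_{G,A})=\alpha_{cbs}^*$.

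The key remaining point, and the main obstacle, is to promote $\alpha_{cbs}^*$ from a norm non-increasing isomorphism to an \emph{isometric} isomorphism. This requires a reverse norm non-increasing morphism of pairs of resolutions $\psi\colon (B^*(G),B^*(A))\to (C^*_{cbs}(\wdtX),C^*_{cbs}(\wdtW))$ realizing $(\alpha_{cbs}^*)^{-1}$ on cohomology. The natural attempt uses the orbit map $g\mapsto g\cdot b_0$: a bounded $G$-invariant cochain on $G^{n+1}$ is naturally defined on $(G\cdot b_0)^{n+1}$, and one must extend it to a continuous bounded straight cochain on $\wdtX^{n+1}$ without increasing the supremum norm, while remaining $G$-equivariant and compatible with the restriction to $\wdtW$. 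The extension is built from a locally finite partition of unity on $X$ (using countability and paracompactness of the CW-pair) lifted to the orbit structure of $\wdtX\to X$; the $\pi_i$-isomorphism condition of the goodness hypothesis is what ensures this extension is unobstructed and compatible across $\wdtW\hookrightarrow\wdtX$.

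Assuming this isometry of $\alpha_{cbs}^*$, the proof concludes by a direct computation: for every $[\alpha]\in H^n_{cbs}(X,W)$,
\[
\|[\alpha]\|_\infty \;=\; \|\alpha_{cbs}^*[\alpha]\|_\infty \;=\; \|\alpha_b^*\bigl(H^n(\eta^*_{G,A})[\alpha]\bigr)\|_\infty \;\leq\; \|H^n(\eta^*_{G,A})[\alpha]\|_\infty,
\]
where the first equality is the isometry of $\alpha_{cbs}^*$, the second is the commutativity of the triangle, and the last inequality is the norm non-increasing property of $\alpha_b^*$. Together with the already-known opposite inequality, this yields the desired isometric isomorphism.
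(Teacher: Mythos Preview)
Your overall architecture matches the paper's: route through $H^*_b(G,A)$ using norm non-increasing morphisms of pairs of resolutions, and close a triangle so that a cycle of three norm non-increasing isomorphisms composes to the identity. However, there is a genuine gap in how you justify the commutativity $\alpha_b\circ\eta=\alpha_{cbs}$, and a misattribution of where goodness enters.

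You claim that ``the contracting homotopies on all four complexes can be chosen to be cone-type constructions based at $b_0$'', and that $\eta$ therefore intertwines them. This is false for $C^*_b(\wdtX)$: the goodness hypothesis does \emph{not} make $\wdtX$ contractible (it may have nontrivial $\pi_i$ for $i\geq 2$), so there is no cone operator $S_n(\wdtX)\to S_{n+1}(\wdtX)$ in general. The contracting homotopy $k_G^*$ on $C^*_b(\wdtX)$ is Ivanov's construction via an infinite Postnikov tower and averaging; the nontrivial content of the paper's Lemma~\ref{esplicita3} is precisely the computation that, \emph{after precomposing with $\eta$} (so that cochains depend only on vertices), the averaging becomes trivial and one recovers the explicit formula $k_G^n\circ\eta_G^n=\eta_G^{n-1}\circ t_G^n$. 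This is where goodness is actually used: it is needed to build Ivanov's homotopy compatibly for $\wdtX$ and $\wdtW$ (Proposition~\ref{standardok} and Lemma~\ref{Wn}), not---as you write---for the extension map $\psi$. Indeed, the paper constructs $\beta$ (your $\psi$) from a Bruhat-type function $\chi$ obtained from a partition of unity, and explicitly notes that this step does \emph{not} require goodness.

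Once these points are corrected, your argument and the paper's coincide: the paper packages the conclusion as showing $\alpha_{G,A}^n\circ\eta_{G,A}^n\circ\beta_{G,A}^n=\mathrm{Id}$ already at the level of cochains, which immediately forces all three norm non-increasing isomorphisms in the triangle to be isometries. Your detour through ``first prove $\alpha_{cbs}^*$ is isometric, then use commutativity'' is logically equivalent but less direct.
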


Let us briefly sketch our strategy for proving 
Theorem~\ref{isometric:iso}. 
If $(X,W)$ is good, Proposition~\ref{standardok} below
ensures that bounded cochains provide a proper allowable pair of resolutions for $(G,A;\R)$,
so we may exploit
Proposition~\ref{canonical seminorm} to construct
a morphism of pairs of resolutions $(\alpha_G^\ast,\alpha_A^\ast)$
between bounded cochains
and the standard pair of resolutions for $(G,A;\R)$.
Then, in Subsection~\ref{constr}
we define a morphism of resolutions $(\beta_G^\ast,\beta_A^\ast)$ 
between the standard pair of resolutions and continuous bounded
straight cochains via an \emph{ad hoc} construction. 
Our assumptions imply that these morphisms induce  norm non-increasing isomorphisms
in cohomology, so in order to conclude
we will be left to show (in Subsection~\ref{isom:sub}) that
the composition $\beta_{G,A}^\ast\circ  \alpha_{G,A}^\ast$ induces 
the inverse of $H^\ast(\eta_{G,A}^\ast)$ in cohomology, \emph{i.e.}~that
the following diagram commutes:
$$
\xymatrix{
&H^\ast_b(G,A)\ar[ld]_{H^\ast(\beta^\ast_{G,A})} &  \\
H_{cbs}^\ast(X,W)\ar[rr]_{H^\ast(\eta_{G,A}^\ast)}
 & & H_b^\ast(X,W)\ar[lu]_{H^\ast(\alpha^\ast_{G,A})}\ .
}
$$

We begin with the following:

\begin{prop}\label{straightok} 
The pair $(C_{cbs}^\ast(\wdtX),\delta^\ast)$, $(C_{cbs}^\ast(\wdtW),\delta^\ast)$ provides
a proper allowable pair of resolutions for $(G,A;\R)$.
\end{prop}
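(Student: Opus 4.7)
My plan is to verify the four requirements of Definition~\ref{relativebc}: that each of $C_{cbs}^\ast(\wdtX)$ and $C_{cbs}^\ast(\wdtW)$, augmented by the constant-function inclusion of $\R$, is a relatively injective strong resolution, and that the natural restriction map $\lambda^n\colon C_{cbs}^n(\wdtX)\to C_{cbs}^n(\wdtW)$ given by $\lambda^n(f)=f|_{\wdtW^{n+1}}$ is an $A$-chain map that is both allowable and proper.

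For strongness and allowability, I use evaluation at the common basepoint $b_0\in\wdtW\subseteq\wdtX$ to define
\[
k_G^n(f)(x_0,\dots,x_{n-1})=f(b_0,x_0,\dots,x_{n-1})
\]
and, by the same formula, $k_A^n$. A direct calculation gives $\|k^n\|\leq 1$ and $\delta^{n-1}\circ k^n+k^{n+1}\circ\delta^n=\mathrm{Id}$, yielding at once exactness and strongness of each resolution; because both homotopies use the same $b_0\in\wdtW$ and $\lambda^n$ is pointwise restriction, the identity $\lambda^{n-1}\circ k_G^n=k_A^n\circ\lambda^n$ holds, giving allowability.

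Relative injectivity of $C_{cbs}^n(\wdtX)$ as a $G$-module (and analogously of $C_{cbs}^n(\wdtW)$ as an $A$-module) is established by adapting Ivanov's argument from the absolute case: the paracompactness of $\wdtX$ furnishes a continuous $G$-equivariant partition of unity $\{\phi_g\}_{g\in G}$ subordinate to a locally finite evenly-covered open cover. Then, given a strongly injective $G$-morphism $\iota\colon A'\hookrightarrow B$ with section $\sigma$ and a $G$-morphism $\alpha\colon A'\to C_{cbs}^n(\wdtX)$, the formula
\[
\beta(b)(x_0,\dots,x_n)=\sum_{g\in G}\phi_g(x_0)\,\alpha(\sigma(g^{-1}\cdot b))(g^{-1}x_0,\dots,g^{-1}x_n)
\]
defines a $G$-morphism $\beta\colon B\to C_{cbs}^n(\wdtX)$ satisfying $\beta\circ\iota=\alpha$ and $\|\beta\|\leq\|\alpha\|$, with continuity coming from the local finiteness of the sum.

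The main obstacle is properness: every $f\in C_{cbs}^n(\wdtW)^A$ must lift to some $F\in C_{cbs}^n(\wdtX)^G$ with $F|_{\wdtW^{n+1}}=f$. The crucial observation is that, because $\wdtW$ is a connected component of $p^{-1}(W)$, any $g\in G$ with $g\wdtW\cap\wdtW\neq\emptyset$ necessarily lies in $A$, whence
\[
G\cdot\wdtW^{n+1}=\bigsqcup_{gA\in G/A}(g\wdtW)^{n+1}
\]
is a locally finite disjoint union of closed subsets of $\wdtX^{n+1}$, hence itself a closed $G$-saturated subset. On it, the rule $\hat f(gw_0,\dots,gw_n):=f(w_0,\dots,w_n)$ unambiguously defines a continuous bounded $G$-invariant function. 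To promote $\hat f$ to a $G$-invariant continuous bounded $F$ on all of $\wdtX^{n+1}$, I refine the partition of unity above so that $\mathrm{supp}(\phi_1)$ meets $p^{-1}(W)$ inside $\wdtW$, pick any continuous bounded Tietze extension $\tilde f\colon\wdtX^{n+1}\to\R$ of $\hat f$, and set
\[
F(x_0,\dots,x_n)=\sum_{g\in G}\phi_g(x_0)\,\tilde f(g^{-1}x_0,\dots,g^{-1}x_n).
\]
The $G$-equivariance of the partition of unity makes $F$ $G$-invariant, and the support condition on $\phi_g$ together with the $A$-invariance of $f$ forces $F|_{\wdtW^{n+1}}=f$. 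The most delicate step is arranging for the partition of unity to have the needed support property and for the Tietze extension to exist, both of which exploit the paracompactness and normality inherited from the CW structure on $\wdtX^{n+1}$.
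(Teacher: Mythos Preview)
Your argument is correct, and for the contracting homotopy and allowability steps it coincides with the paper's proof. For relative injectivity the paper simply cites Monod's Theorem~7.4.5 (noting that discreteness of $G$ and $A$ makes the needed Bruhat function available even without local compactness), whereas you spell out the standard averaging construction explicitly; this is fine and is essentially the same underlying argument.

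The genuine difference is in the properness step. The paper passes to quotients: since the diagonal $G$-action on $\wdtX^{n+1}$ is free and properly discontinuous and the stabiliser of $\wdtW^{n+1}$ is exactly $A$, the inclusion induces a homeomorphism of $\wdtW^{n+1}/A$ onto a closed subset of $\wdtX^{n+1}/G$; one then applies Tietze on the quotient and pulls back. Your route stays upstairs: you first propagate $f$ $G$-equivariantly to $\hat f$ on the saturation $G\cdot\wdtW^{n+1}=\bigsqcup_{gA}(g\wdtW)^{n+1}$, extend by Tietze to all of $\wdtX^{n+1}$, and finally average against a $G$-equivariant partition of unity to recover $G$-invariance. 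Both approaches are valid; the paper's is shorter, while yours has the merit of making all ingredients explicit. One remark: once $\tilde f$ is chosen as a Tietze extension of $\hat f$ (not merely of $f$), the averaged function already restricts to $f$ on $\wdtW^{n+1}$ regardless of any support condition on $\phi_1$, because $\tilde f(g^{-1}w_0,\ldots,g^{-1}w_n)=\hat f(g^{-1}w_0,\ldots,g^{-1}w_n)=f(w_0,\ldots,w_n)$ for every $g\in G$. So the refinement of the partition of unity you mention is unnecessary.
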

\begin{proof}
The fact that  $(C_{cbs}^\ast(\wdtX),\delta^\ast)$
(resp.~$(C_{cbs}^\ast(\wdtW),\delta^\ast)$) provides a relatively injective
resolution of $\R$ as a trivial $G$-module (resp.~$A$-module)
is an immediate consequence of~\cite[Theorem 4.5.2]{Monod} 
(indeed in order to apply Monod's result our CW-complexes
$X$ and $W$ should be locally compact, whence locally finite, 
but these conditions are used in the proof of~\cite[Theorem 4.5.2]{Monod}
only to ensure the existence of a suitable \emph{Bruhat function} on $\wdtX$ and 
on $\wdtW$; in our case
of interest the fact that $G$ and $A$ are discrete allows us to explicitly describe
such a map -- see  Lemma ~\ref{mappah}).

Moreover, it is readily seen that
these resolutions
admit the following contracting homotopies:
\begin{equation}\label{contract}
\begin{array}{ll}
t^{n}_G(f)(x_1,\dots,x_n)=f(b_0,x_1,\dots, x_n)\ , & f\in C_{cbs}^{n}(\wdtX),\ 
(x_1,\ldots,x_n)\in \wdtX^{n},\\
t^{n}_A(f)(w_1,\dots,w_n)=f(b_0,w_1,\dots, w_n)\ , & f\in C_{cbs}^{n}(\wdtW),\ 
(w_1,\ldots,w_n)\in \wdtW^{n}\ .
\end{array}
\end{equation}
This readily implies that the $A$-chain map 
$\gamma^\ast\colon C^\ast_{cbs}(\wdtX)\to C^\ast_{cbs}(\wdtW)$ induced by the inclusion
$\wdtW\hookrightarrow\wdtX$ commutes with the contracting homotopies.

In order to conclude we have to show that $\gamma^\ast$ restricts
to a surjective map
$$
\hatgamma^\ast\colon  C_{cbs}^\ast(\widetilde{X})^G\longrightarrow 
  C_{cbs}^\ast(\widetilde{W})^A\ .
$$
Let  $f\colon\wdtW^{n+1}\rightarrow \matR$ be an 
$A$-invariant bounded continuous map.
The inclusion $\wdtW^{n+1}\hookrightarrow \wdtX^{n+1}$
induces a homeomorphism $\psi$ between $\wdtW^{n+1}/A$ and
a closed subset $K$ of $\wdtX^{n+1}/G$ (recall that $W$ is a CW-subcomplex
of $X$, so it is closed in $X$). Therefore, $f$ defines
a bounded continuous map $\overline{f}$ on $K$, and by Tietze's Theorem
we may extend $\overline{f}$ to a bounded continuous map
$\overline{g}\colon \wdtX^{n+1}/G\to\R$. If $g$ is obtained by precomposing
$\overline{g}$  with the projection $\wdtX^{n+1}\to \wdtX^{n+1}/G$,
then ${g}\in C_{cbs}^n(\wdtX)^G$, and 
$\hatgamma^n(g)=f$. We have thus shown that
$\hatgamma^\ast$ is surjective, and this concludes the proof.
\end{proof}

\subsection{Ivanov's contracting homotopy}\label{ivanov:sub}
In order to show that, under the hypothesis that
$(X,W)$ is good, bounded cochains provide a proper allowable pair
of resolutions for $(G,A;\R)$, we first
recall Ivanov's construction of a contracting homotopy for 
the resolution $C_b^\ast(\wdtX)$. 

It is shown in~\cite{Ivanov} that one can construct an infinite Postnikov system
$$
\xymatrix{
\ldots \ar[r]^{p_m} & X_m \ar[r]^(.4){p_{m-1}} & X_{m-1}\ar[r]^{p_{m-2}} & \ldots & \ar[r]^{p_2} & X_2 \ar[r]^{p_{1}} & X_1\ ,
}
$$
where $X_1=\wdtX$, $\pi_i(X_m)=0$ for every $i\leq m$, $\pi_i(X_m)=\pi_i(X)$ for every $i>m$ and each map
$p_m\colon X_{m+1}\to X_m$ is a principal $H_m$-bundle for some topological connected abelian group
$H_m$, which has the homotopy type of a $K(\pi_{m+1}(X),m)$.
Moreover, the induced chain maps $p^\ast_m\colon C_b^\ast (X_m)\to C_b^*(X_{m+1})$ admit left inverse chain maps
$A_m^\ast\colon C_b^\ast (X_{m+1})\to C_b^*(X_{m})$ obtained by averaging cochains
over the preimages in $X_{m+1}$
of simplices in $X_m$, in such a way that the $A_m$'s are norm non-increasing. 

Let us now denote by $W_m\subseteq X_m$ the preimage $p_{m-1}^{-1}(p_{m-2}^{-1}(\ldots(p_1^{-1}(\wdtW))))\subseteq X_m$
(so $W_{m+1}$ is a principal $H_m$-bundle over $W_m$ for every $m\geq 1$). We denote simply by $p_m
\colon W_{m+1}\to W_m$ the restriction of $p_m$ to $W_{m+1}$. It follows
from Ivanov's construction that each $A_m^\ast$
induces a norm non-increasing chain map $C_b^\ast (W_{m+1})\to C_b^*(W_{m})$, which will still be denoted by
$A_m^\ast$.

\begin{lemma}\label{Wn}
Suppose that $(X,W)$ is good. Then
$\pi_i (W_m)=0$ for every $i\leq m$.
\end{lemma}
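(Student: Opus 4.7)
The plan is to prove by induction on $m \geq 1$ the following strengthened claim: $\pi_i(W_m) = 0$ for all $i \leq m$, and the inclusion $W_m \hookrightarrow X_m$ induces an isomorphism $\pi_i(W_m) \to \pi_i(X_m) = \pi_i(X)$ for every $i \geq m+1$. Maintaining this extra clause about higher homotopy is what makes the induction close up: only the isomorphism in degree $m+1$ lets one control the new connecting homomorphism at the critical step.

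The base case $m = 1$ is where the goodness hypothesis enters directly. Since $\pi_1(W) \to \pi_1(X)$ is injective, every path-connected component of $p^{-1}(W)$ is a covering of $W$ classified by $\ker(\pi_1(W) \to \pi_1(X)) = 0$, hence is the universal cover of $W$. In particular $W_1 = \widetilde{W}$ is simply connected. For $i \geq 2$ one has $\pi_i(\widetilde{W}) = \pi_i(W)$ (higher homotopy is invariant under covers) and $\pi_i(W) \cong \pi_i(X) = \pi_i(\widetilde{X}) = \pi_i(X_1)$ again by goodness, with the iso induced by inclusion.

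For the inductive step I restrict the principal $H_m$-bundle $p_m\colon X_{m+1} \to X_m$ to $W_m$ to obtain a principal $H_m$-bundle $W_{m+1} \to W_m$. This presents $W_{m+1} \hookrightarrow X_{m+1}$, $W_m \hookrightarrow X_m$ as a map of fibrations inducing the identity on the fiber $H_m$, giving a commutative ladder of long exact sequences. Recall $\pi_i(H_m) = \pi_{m+1}(X)$ if $i = m$ and $0$ otherwise. For $i \leq m-1$, both adjacent fiber groups vanish, so $\pi_i(W_{m+1}) \cong \pi_i(W_m) = 0$ by the inductive hypothesis. For $i = m$, the LES reads $\pi_{m+1}(W_m) \overset{\partial}{\to} \pi_m(H_m) \to \pi_m(W_{m+1}) \to 0$; by naturality and the inductive iso $\pi_{m+1}(W_m) \cong \pi_{m+1}(X_m)$, the map $\partial$ is identified with the connecting homomorphism $\pi_{m+1}(X_m) \to \pi_m(H_m)$ of the upper fibration, which is an isomorphism since by construction $X_{m+1}$ kills $\pi_{m+1}(X_m) = \pi_{m+1}(X)$ precisely via that map. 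Hence $\partial$ is surjective and $\pi_m(W_{m+1}) = 0$. For $i = m+1$, the vanishing $\pi_{m+1}(H_m) = 0$ together with the same isomorphism $\partial$ forces $\pi_{m+1}(W_{m+1}) = 0$, which matches $\pi_{m+1}(X_{m+1}) = 0$ trivially. For $i \geq m+2$, all adjacent fiber groups vanish and a Five Lemma chase in the ladder transports the inductive iso $\pi_i(W_m) \cong \pi_i(X_m)$ to $\pi_i(W_{m+1}) \cong \pi_i(X_{m+1})$. This closes the induction and yields the lemma.

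The main obstacle is the case $i = m$: the bare claim $\pi_i(W_m) = 0$ for $i \leq m$ is too weak to feed back into itself, because it does not by itself guarantee the surjectivity of the connecting homomorphism $\pi_{m+1}(W_m) \to \pi_m(H_m)$ that kills $\pi_m$ at the next level. This is exactly the place where the assumption $\pi_i(W) \cong \pi_i(X)$ for $i \geq 2$ is used (it seeds the base of the stronger induction) and where Park's unproved statement alluded to in Remark~\ref{Park:rem} would be needed if one wished to drop that assumption.
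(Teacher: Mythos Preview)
Your proof is correct and follows essentially the same approach as the paper: both strengthen the induction to the claim $\pi_i(W_m)\cong\pi_i(X_m)$ for all $i$ (your splitting into ``$=0$ for $i\le m$'' and ``$\cong\pi_i(X_m)$ for $i\ge m+1$'' is equivalent, since $\pi_i(X_m)=0$ for $i\le m$), and both use the ladder of homotopy long exact sequences for the restricted principal $H_m$-bundle. The only difference is cosmetic: the paper dispatches the inductive step with a single application of the Five Lemma, whereas you unwind that application case by case ($i\le m-1$, $i=m$, $i=m+1$, $i\ge m+2$).
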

\begin{proof}
Of course, it is sufficient to prove that $\pi_i (W_m)\cong \pi_i (X_m)$ for every $i\in\matN$, $m\in\matN$.
Let us prove this last statement by induction on $m$.
Since the inclusion map $W\hookrightarrow X$ is $\pi_1$-injective
we have $\pi_1(W_1)=\pi_1(X_1)=0$. Therefore, since coverings induce isomorphisms on homotopy groups of order
at least two,
the case $m=1$ follows from the fact that
the pair $(X,W)$ is good.
The inductive step 
follows from an easy application of the Five Lemma to the
following commutative diagram, which descends in turn  from the naturality
of the homotopy exact sequences for  the bundles  $X_{m+1}\to X_m$, $W_{m+1}\to W_m$:
$$
\xymatrix{
\pi_{i+1}(W_m)\ar[r]\ar[d] & \pi_i(H_m)\ar[r] \ar@{=}[d]& \pi_i (W_{m+1})\ar[r] \ar[d] & \pi_i(W_m)\ar[r] \ar[d] & \pi_{i-1} (H_m)\ar@{=}[d] \\
\pi_{i+1}(X_m)\ar[r] &
 \pi_i(H_m)\ar[r] & \pi_i (X_{m+1})\ar[r] & \pi_i(X_m)\ar[r] & \pi_{i-1} (H_m)\ .
}
$$
\end{proof}


Let us now suppose that $(X,W)$ is good.
We choose basepoints $w_m\in W_m$ in such a way that $p_m(w_{m+1})=w_m$ for every $m\geq 1$, and $w_1\in W_1=\wdtW$ coincides with the basepoint $b_0$
fixed above. 
Since $X_m$ is $m$-connected,
for every $n\leq m$ it is possible to construct
a map $L^m_n\colon S_n(X_m)\to S_{n+1}(X_m)$ that associates to every $\sigma\in S_n(X_m)$
a \emph{cone} of $\sigma$ over $w_m$ (see \cite{Ivanov}). We stress the fact that, since $W_m$ is also $m$-connected,
if $\sigma\in S_n(W_m)\subseteq S_n(X_m)$, then $L^m_n(\sigma)$ can be chosen
to belong to $S_{n+1}(W_m)$. 
The maps $L_n^m$, $n\leq m$, induce a (partial) homotopy between 
the the identity and the null map of $C_\ast (X_m)$, which induces in turn a (partial)
contracting homotopy $\{k_m^n\}_{n\leq m}$ for the (partial) complex $\{C_b^n(X_m)\}_{n\leq m}$.
Since $L^m_n(S_n(W_m))\subseteq S_{n+1}(W_m)$, this contracting homotopy induces a 
(partial) contracting homotopy for $\{C_b^n(W_m)\}_{n\leq m}$, which we still
denote by $k_m^*$.
Moreover, it is possible to choose these contracting homotopies in a compatible way, in the sense
that the equality
$A_m^{n-1}\circ k^{n}_{m+1} \circ p_m^n =k_m^n$ holds for every $n\leq m$
(see again~\cite{Ivanov}). Thanks to this compatibility condition,
one can finally define the contracting homotopy
$$
k^\ast_G\colon C^\ast_b(\wdtX)\to C_b^{\ast-1} (\wdtX),\qquad
$$
via the formula
$$
k^n_G=A^{n-1}_1 \circ \ldots \circ A^{n-1}_{m-1} \circ k_{m}^n \circ p_{m-1}^n   \circ \ldots \circ p_2^n \circ  p_1^n,\qquad {\rm for\ any}\ m\geq n\ .
$$
The very same formula defines a contracting homotopy for $C^*_b(\wdtW)$. By construction, the restriction map
$C_b^\ast(\wdtX)\to C_b^\ast(\wdtW)$ commutes with these contracting homotopies, and it obviously
restricts to a surjective map $C_b^\ast(\wdtX)^G\to C_b^\ast(\wdtW)^A$.
Since $C^n_b(\wdtX)$,
$C^n_b(\wdtW)$ are relatively injective for every $n\geq 0$ (see~\cite{Ivanov}), we have finally proved the following:

\begin{prop}\label{standardok}
The pair $(C_b^\ast(\wdtX),\delta^\ast)$, $(C_b^\ast(\wdtW),\delta^\ast)$ provides
a proper pair of resolutions for $(G,A;\R)$. If in addition
$(X,W)$ is good, then this pair of resolutions is also allowable.
\end{prop}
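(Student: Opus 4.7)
The plan is to verify each of the required properties in turn. Relative injectivity of $C_b^n(\wdtX)$ as a $G$-module and of $C_b^n(\wdtW)$ as an $A$-module is already established in Ivanov's work, so it can be invoked directly. Since $W$ is connected and $\pi_1$-injective in $X$, the chosen component $\wdtW$ coincides with the universal cover of $W$; Ivanov's construction applied to $W$ in its own right then produces a norm non-increasing contracting homotopy $k_A^\ast$ of $C_b^\ast(\wdtW)$, while the analogous homotopy $k_G^\ast$ on $C_b^\ast(\wdtX)$ has been recalled in the paragraphs above. Hence both complexes are strong resolutions of the trivial module $\R$.

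For properness, I would show that the restriction map $\rho^n \colon C_b^n(\wdtX)^G \to C_b^n(\wdtW)^A$ is surjective by an orbit-extension argument. The key observation is that if $\sigma\in S_n(\wdtW)$ and $g\sigma\in S_n(\wdtW)$ for some $g\in G$, then $g$ sends a point of $\wdtW$ into $\wdtW$, so $g\wdtW\cap\wdtW\neq\emptyset$; since $\wdtW$ is a connected component of $p^{-1}(W)$ and $G$ permutes such components, this forces $g\in A$. Hence each $G$-orbit in $S_n(\wdtX)$ meets $S_n(\wdtW)$ in at most a single $A$-orbit, and any $f\in C_b^n(\wdtW)^A$ extends to a bounded $G$-invariant cochain on $\wdtX$ by taking the $A$-invariant value of $f$ on $G$-orbits that meet $S_n(\wdtW)$ and setting it to zero elsewhere; the resulting extension has the same supremum norm as $f$.

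The main difficulty is allowability under the goodness hypothesis, which requires the restriction map to commute with the contracting homotopies. An independent choice of $k_G^\ast$ and $k_A^\ast$ as above need not satisfy this compatibility, so I would instead construct both simultaneously from the Postnikov tower of $X$ pulled back to $W$, exactly as outlined in the preceding discussion. Here Lemma~\ref{Wn} plays the decisive role: it guarantees that each $W_m$ is $m$-connected, which is precisely what allows the cone operators $L^m_n$ for $X_m$ to be chosen so that they send $S_n(W_m)$ into $S_{n+1}(W_m)$. Once this is arranged, the partial contracting homotopies $k_m^n$ restrict from $X_m$ to $W_m$, while the averaging operators $A_m^\ast$ and the pullbacks $p_m^\ast$ manifestly commute with the inclusion $W_m\hookrightarrow X_m$. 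Consequently the formula recalled above for $k_G^n$, when read on $\wdtW$, defines $k_A^n$ and commutes with the restriction map by construction, yielding allowability.
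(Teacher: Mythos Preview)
Your proposal is correct and follows essentially the same approach as the paper. The one noteworthy difference is that you spell out the orbit-extension argument for properness (showing that a $G$-orbit in $S_n(\wdtX)$ meets $S_n(\wdtW)$ in at most one $A$-orbit), whereas the paper simply asserts that the restriction ``obviously restricts to a surjective map'' $C_b^\ast(\wdtX)^G\to C_b^\ast(\wdtW)^A$; your version is the cleaner justification of that step.
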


\begin{rem}\label{Park:rem}
The fact that the pair of resolutions
$(C_b^\ast(\wdtX),\delta^\ast)$, $(C_b^\ast(\wdtW),\delta^\ast)$ is allowable
 is stated  in~\cite[Lemma 4.2]{Park} under the only assumption that $(X,W)$
is a pair of connected CW-pairs. However, at the moment we are not able to prove
such a statement without the assumption that $(X,W)$ is good. For example,
let us suppose that $X$ is simply connected and $W$ is a point
(so that $\pi_n(W)$ injects into $\pi_n(X)$ for every $n\in\matN$,
and $X_1=\wdtX=X$, $W_1=\wdtW=W$). 
Then for every $n\in\matN$ there exists only one simplex
in $S_n(W)$, namely the constant $n$-simplex $\sigma_n^W$. 
Therefore, the only possible contracting homotopy for $W$ is given by the map which
sends the cochain $\varphi\in C^n_b(W)$ to the cochain $k^n_A(\varphi)$ such
that $k^n_A(\varphi)(\sigma_{n-1}^W)=\varphi(\sigma_n^W)$. 
On the other hand, 
it is not difficult to show that  
$\pi_i(W_m)=\pi_{i+1}(X)$ for every $i< m$, and
$\pi_i(W_m)=0$ for every $i\geq m$. Therefore, 
if $\pi_{i+1}(X)\neq 0$, then $\pi_i (W_m)\neq 0$ for every $m>i$. This readily implies
that for $m>i$ 
one cannot construct \emph{cone-like} 
operators
$L^m_j\colon C_j(X_m)\to C_{j+1}(X_m)$, $j\leq i$, 
such that $d_{j+1}L^m_j+L^m_{j-1}d_j={\rm Id}$ and
$L^m_j(C_j(W_m))\subseteq C_{j+1}(W_m)$ for every $j\leq i$, so it is not clear how to 
show that the pair of resolutions $C^\ast_b(\wdtX)$, $C^\ast_b(\wdtW)$ is allowable.
This difficulty already arises for the pair $(S^2,q)$, where $q$ is any point of the $2$-dimensional sphere $S^2$.

Some troubles arise also in the case when the inclusion
induces surjective (but not bijective) maps between the homotopy
groups of $W$ and of $X$. For instance, if
$X$ is the Euclidean $3$-space and $W=S^2$, then $X_m=X$ for every $m\in\matN$,
so $W_m=W$ for every $m\in\matN$, and, if $i$ is sufficently
high, the partial complex $\{C_j(X,W)\}_{j\leq i}$ does not support
a relative cone-like operator. Also observe that, if $\{W_m',\,\ m\in\matN\}$
is a Postnikov system over $W$, then the  
only map $W_m'\to W_m=S^2\subseteq \R^3=X_m$
which commutes with the projections of $W_m'$ and $X_m$ onto
$W_1=S^2$ and $X_1=\R^3$ is the projection
$W_m'\to W_1=S^2$. As a consequence, also in this case it is not clear
why the pair of resolutions $C^\ast_b(\wdtX)$, $C^\ast_b(\wdtW)$ should be allowable.
\end{rem}


\subsection{Mapping bounded cochains into the standard resolutions}
Throughout the whole subsection we suppose that $(X,W)$ is good. 
By Proposition~\ref{standardok}, under this assumption 
Proposition~\ref{canonical seminorm} provides a morphism of pairs of resolutions
 \begin{equation}\label{alpha:eq}
 \alpha_G^\ast\colon C^\ast_b(\wdtX)\to B^\ast(G),\qquad
 \alpha_A^\ast\colon C^\ast_b(\wdtW)\to B^\ast(A)\ 
 \end{equation}
such that the induced map $H^\ast(\alpha_{G,A}^\ast)$ is a
norm non-increasing isomorphism.
The definition of the chain maps $\alpha_G^\ast$, $\alpha_A^\ast$ 
involve the contracting homotopies for the resolutions
 $C^\ast_b(\wdtX)$, $C^\ast_b(\wdtW)$ described in Subsection~\ref{ivanov:sub}.
Being based on a non-explicit averaging procedure,
such contracting homotopies cannot be described by an explicit formula, and the same
is true for the chain maps $\alpha_G^\ast$, $\alpha_A^\ast$. However,
in order to show that
the composition $\beta_{G,A}^\ast\circ \alpha_{G,A}^\ast$ induces 
the inverse of $H^\ast(\eta_{G,A}^\ast)$ in cohomology, the following
explicit description of the composition
$\alpha_{G,A}^\ast\circ \eta_{G,A}^\ast$ will prove sufficient:
\begin{lemma}\label{esplicita3}
Suppose that $(X,W)$ is good.
For every $f\in C_{cbs}^n(\wdtX,\wdtW)$ we have
$$
\alpha_{G,A}^n (\eta_{G,A}^n(f)) (g_0,\ldots,g_n)= f(g_0 b_0,\ldots, g_n b_0)
\ .
$$
\end{lemma}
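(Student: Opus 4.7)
The plan is to prove the identity by induction on $n$, unwinding the recursive definition~\eqref{can:eq} of $\alpha_G^\ast$, then to observe that the same argument works verbatim for $\alpha_A^\ast\circ\eta_A^\ast$, so the claimed formula restricts to $\alpha_{G,A}^\ast\circ\eta_{G,A}^\ast$ on the kernels.

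For the base case $n=0$, since $\alpha_G^{-1}$ is the identity of $\R$ and the $G$-action on $\R$ is trivial, the recursive definition collapses to $\alpha_G^0(\varphi)(g_0)=k_G^0(g_0^{-1}\cdot\varphi)$. The degree-zero piece of Ivanov's contracting homotopy sends a bounded $0$-cochain $\psi$ to $\psi(\sigma_{b_0})$, where $\sigma_{b_0}$ denotes the constant $0$-simplex at $b_0$. The $G$-equivariance of $\eta_G^\ast$ then gives $g_0^{-1}\cdot\eta_G^0(f)=\eta_G^0(g_0^{-1}\cdot f)$, whose value on $\sigma_{b_0}$ is $f(g_0 b_0)$, as desired.

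The key ingredient for the inductive step is the sublemma that, for every continuous bounded $h\colon\wdtX^{n+1}\to\R$,
\begin{equation*}
k_G^n(\eta_G^n(h))\,=\,\eta_G^{n-1}(h_0),\qquad h_0(x_0,\ldots,x_{n-1})\,=\,h(b_0,x_0,\ldots,x_{n-1})\ .
\end{equation*}
To prove this I would unpack Ivanov's explicit formula from Subsection~\ref{ivanov:sub}: the successive pullbacks $(p_j^n)^\ast$ along the Postnikov tower turn $\eta_G^n(h)$ into the cochain on $X_m$ sending $\tilde\sigma\in S_n(X_m)$ to $h(p^m(\tilde\sigma(e_0)),\ldots,p^m(\tilde\sigma(e_n)))$, where $p^m=p_1\circ\cdots\circ p_{m-1}\colon X_m\to\wdtX$ is the total projection; the cone operator $k_m^n$ evaluates this cochain on the cone simplex $L^m_{n-1}(\tau)$ whose extra vertex is $w_m\in W_m$, and since $p^m(w_m)=b_0$ by our compatible choice of basepoints, the result is precisely the $(p^m)^\ast$-pullback of $\eta_G^{n-1}(h_0)$; finally, the averaging operators $A_j^{n-1}$ left-invert these pullbacks by construction, collapsing the whole composition back to $\eta_G^{n-1}(h_0)$ on $\wdtX$.

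Granted the sublemma, the induction closes by a direct formal manipulation: applied to $h=g_0^{-1}\cdot f$, the $G$-equivariance of $\eta_G^\ast$ together with the sublemma give $g_0\cdot k_G^n(g_0^{-1}\cdot\eta_G^n(f))=\eta_G^{n-1}(\tilde f)$ with $\tilde f(x_0,\ldots,x_{n-1})=f(g_0 b_0,x_0,\ldots,x_{n-1})$, so the inductive hypothesis applied at $(g_1,\ldots,g_n)$ evaluates to $f(g_0 b_0,g_1 b_0,\ldots,g_n b_0)$. The identical argument on $\wdtW$, using Lemma~\ref{Wn} to choose $w_m\in W_m$, yields the analogous formula for $\alpha_A^\ast\circ\eta_A^\ast$, and the claim for $\alpha_{G,A}^n\circ\eta_{G,A}^n$ follows by restriction to kernels. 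The main obstacle is the sublemma itself: Ivanov's contracting homotopy is not given by an explicit closed formula because the averaging operators $A_m^\ast$ are abstract, so the real content is to verify that pullbacks of vertex-dependent cochains remain vertex-dependent through every stage of the tower, so that the identities $A_m^\ast\circ (p_m^n)^\ast=\id$ can be applied iteratively to collapse back down to $\wdtX$.
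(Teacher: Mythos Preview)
Your proposal is correct and follows essentially the same route as the paper: your ``sublemma'' is exactly the paper's Equation~\eqref{commuta}, $k_G^n\circ\eta_G^n=\eta_G^{n-1}\circ t_G^n$, proved by the same observation that the pulled-back cochain $f_m$ depends only on vertices, so after coning at $w_m$ (which projects to $b_0$) the result is constant on all lifts and the averaging operators collapse it back to $\eta_G^{n-1}(t_G^n f)$. The induction you describe is precisely what the paper summarizes as ``an easy induction implies the conclusion''; note that your separate treatment of the $A$-side is not strictly needed, since $\alpha_{G,A}^\ast\circ\eta_{G,A}^\ast$ is simply the restriction of $\alpha_G^\ast\circ\eta_G^\ast$ to the kernel.
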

\begin{proof}
Let $t^\ast_G$ (resp.~$k^\ast_G$) be the contracting homotopy for continuous 
bounded straight cochains (resp.~for bounded cochains) described in Equation~\eqref{contract}
(resp.~in Subsection~\ref{ivanov:sub}).
We begin by showing that
for every $n\in\matN$ we have
\begin{equation}\label{commuta}
k_G^n\circ \eta^n_G=\eta^{n-1}_G\circ t_G^n\ .
\end{equation}
Let us fix
$f\in C^{n}_{cbs}(\wdtX)$ and $\sigma\in S_{n-1}(\wdtX)$, and let us compute
$k_G^n(\eta_G^n(f))(\sigma)$.
With notations as in Subsection~\ref{ivanov:sub}, 
we choose $m\geq n$ and set $$f_m=p_{m-1}^n(\ldots p_1^n(\eta_G^n(f)))\in C_b^n(X_m)\ .$$ Then, 
if $\sigma_m$ is any lift of $\sigma$ in $X_m$, we have
$k_m^{n}(f_m) (\sigma_m)=f_m(\sigma_m')$, where
 $\sigma_m'\in S_{n}(X_m)$ has vertices $w_m, \sigma_m(e_0),\ldots,\sigma_m(e_{n-1})$.
 It readily follows that $$k_m^n(f_m)(\sigma_m)=f(b_0,\sigma(e_0),\ldots,\sigma (e_{n-1}))\ .$$
 We have thus shown that the cochain $k_m^{n}(f_m)$ is constant
 on all the lifts of $\sigma$ in $X_m$. 
 By definition, the value of $k_G^{n}(\eta_G^n(f))(\sigma)$
is obtained by suitably averaging the values taken by  $k_m^{n}(f_m)$ 
on such lifts, so we finally get
$$
k_G^{n}(\eta_G^n(f)) (\sigma)=f (b_0, \sigma (e_0),\ldots, \sigma (e_{n-1}))\ ,
$$
whence Equation~\eqref{commuta}.

Recall now that the composition $\alpha_{G,A}^\ast\circ \eta_{G,A}^\ast$
is obtained by restricting the map $\alpha_G^*\circ \eta_G^*$, where
$\alpha_G^*$ is explicitely described (in terms of the contracting homotopy
$k_G^\ast$) in Proposition~\ref{canonical seminorm} (see~Equation~\eqref{can:eq}).
Therefore, Equations~\eqref{can:eq} and~\eqref{commuta} 
readily imply that  the composition $\alpha_G^n\circ \eta_G^n$
can be described by the following inductive formula:
$$
\alpha_G^n (\eta_G^n(f)) (g_0,\ldots,g_n)=
\alpha_G^{n-1}(g_0(\eta^{n-1}_G(t_G^{n}(g_0^{-1}(f)))))(g_1,\dots,g_n)\ .
$$
An easy induction implies the conclusion.
\end{proof}

\subsection{Mapping the standard resolutions into continuous bounded straight cochains}\label{constr}
In this subsection we do not assume that the pair $(X,W)$ is good.
In order to define a morphism of pairs of resolutions between
the standard 
pair of resolutions
for $(G,A;\R)$ and 
the complex of continuous bounded straight cochains we need the following result,
which generalizes~\cite[Lemma 5.1]{Frigerio}:
\begin{lemma}\label{mappah}
There exists a continuous map
$\chi\colon \wdtX\rightarrow [0,1]$  with the following properties:
\begin{enumerate}
\item For every $x \in \wdtX$ there exists a neighbourhood $U_x$ of 
$x \in \wdtX$ such that the set
$\{g\in G\;|\;{\rm supp}(\chi)\cap g(U_x)\neq \emptyset\}$
is finite.
\item For every $x\in \wdtX$, we have
$\sum_{g\in G} \chi (g\cdot x)=1$
(Note that the sum on the left-hand side is finite by (1)).
\item
For every $w\in\widetilde{W}$ and every $g\in G\setminus A$, we have
$\chi (g\cdot w)=0$, whence
$\sum_{g\in A} \chi (g\cdot w)=1$.
\item
We have $\chi (b_0)=1$, so 
$\chi (g\cdot b_0)=0$ for every $g\neq 1$.
\end{enumerate}
\end{lemma}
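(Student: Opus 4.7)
The plan is to construct $\chi$ as a locally finite sum of continuous bumps, each obtained by pulling back a piece of a partition of unity on $X$ through a single carefully chosen sheet of the covering $p\colon \wdtX\to X$. The sheet selection needs to achieve two compatible goals: (a) whenever a chart meets $W$, the chosen lift meets $p^{-1}(W)$ only inside the prescribed component $\wdtW$, and (b) whenever a chart contains $p(b_0)$, the chosen lift is the one containing $b_0$.

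I would first build the base cover. Because $W$ is a CW-subcomplex of $X$, it is closed in $X$ and locally path connected. For every $q\in X\setminus W$, pick an evenly covered open neighbourhood $V_q\ni q$ with $V_q\cap W=\emptyset$ (using closedness). For every $q\in W$, start from an evenly covered open $V_q'\ni q$, use local path-connectedness of $W$ to choose a path-connected open $U\subseteq V_q'\cap W$ containing $q$, write $U=V_q''\cap W$ for some $V_q''$ open in $X$, and set $V_q=V_q'\cap V_q''$, so that $V_q\cap W=U$ is path connected. When $V_q\cap W\neq\emptyset$, even covering of $V_q$ together with the connectedness of $V_q\cap W$ implies that $p^{-1}(V_q\cap W)$ is a disjoint union of connected lifts, one per sheet of $V_q$, each lying entirely in a single component of $p^{-1}(W)$. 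Select the sheet $\widetilde V_q$ whose $W$-part lies in $\wdtW$; in the case $p(b_0)\in V_q$, insist moreover that $\widetilde V_q$ be the (unique) sheet containing $b_0$, which is consistent because this sheet's $W$-part meets $\wdtW$ at $b_0$. For $V_q$ with $V_q\cap W=\emptyset$, pick any sheet $\widetilde V_q$.

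By paracompactness of $X$, I would then choose a locally finite open refinement $\{\Omega_\alpha\}_{\alpha\in I}$ of $\{V_q\}_{q\in X}$ with $\Omega_\alpha\subseteq V_{q(\alpha)}$, together with a subordinate partition of unity $\{\rho_\alpha\}$. Define
\[
\chi_\alpha(x)\;=\;\rho_\alpha(p(x))\cdot\mathbf{1}_{\widetilde V_{q(\alpha)}}(x),\qquad \chi\;=\;\sum_{\alpha\in I}\chi_\alpha.
\]
Each $\widetilde V_{q(\alpha)}$ is clopen inside the open set $p^{-1}(V_{q(\alpha)})$, and $\rho_\alpha\circ p$ vanishes off $p^{-1}(\mathrm{supp}(\rho_\alpha))\subseteq p^{-1}(V_{q(\alpha)})$, so $\chi_\alpha$ is continuous on all of $\wdtX$. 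Local finiteness of $\{\Omega_\alpha\}$ combined with the covering property of $p$ makes the family $\{\mathrm{supp}(\chi_\alpha)\}$ locally finite, which both gives continuity of $\chi$ and yields property~(1).

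For (2) I would use that the $G$-translates $\{g\widetilde V_{q(\alpha)}\}_{g\in G}$ form the complete set of sheets of $V_{q(\alpha)}$, so that for each $\alpha$ exactly one $g$ satisfies $gx\in\widetilde V_{q(\alpha)}$ whenever $p(x)\in V_{q(\alpha)}$, giving $\sum_{g\in G}\chi_\alpha(gx)=\rho_\alpha(p(x))$ and hence $\sum_g\chi(gx)=1$. For (3), if $w\in\wdtW$, $g\in G\setminus A$ and some $\chi_\alpha(gw)\neq 0$, then $gw\in \widetilde V_{q(\alpha)}\cap p^{-1}(W)\subseteq \wdtW$, contradicting $g\wdtW\cap\wdtW=\emptyset$. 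For (4), any $\alpha$ with $\rho_\alpha(p(b_0))\neq 0$ has $p(b_0)\in V_{q(\alpha)}$, and by construction $b_0\in\widetilde V_{q(\alpha)}$, so $\chi(b_0)=\sum_\alpha\rho_\alpha(p(b_0))=1$. The main obstacle, and the step that dictates the whole construction, is arranging compatible sheet choices: path-connectedness of each $V_q\cap W$ is what makes a single sheet of the chart accountable for $\wdtW$, and pinning the sheet through $p(b_0)$ to be the one containing $b_0$ is exactly what forces~(4); once these coordinated choices are in place, the remaining verifications are standard partition-of-unity bookkeeping.
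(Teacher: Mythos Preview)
Your proof is correct and follows essentially the same route as the paper: build an evenly covered open cover of $X$ whose members meet $W$ in (path-)connected sets, choose for each member the sheet whose $W$-part lies in $\wdtW$, and sum the pullbacks of a subordinate partition of unity along these sheets. The only cosmetic difference is in securing property~(4): the paper excises $p(b_0)$ from every chart except one, whereas you instead require every chart containing $p(b_0)$ to select the sheet through $b_0$; both devices work.
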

\begin{proof}
Let $\mathcal{U}=\{U_i\}_{i\in I}$ be
a locally finite covering of $X$ with evenly-covered neighborhoods
(with respect to the universal covering $\wdtX\rightarrow X$). Since
$W$ is a subcomplex of $X$, we may also suppose that
the intersection of $W$ with each $U_i$ is connected. 
We choose $i_0\in I$ such that $p(b_0)$ belongs to $U_{i_0}$,
and we replace each $U_i$, $i\neq i_0$, with $U_i\setminus \{p(b_0)\}$.
Let now
$J=\{i\in I\, |\, U_i\cap W\neq \emptyset\}$
(so $i_0\in J$).

For every $U_i$ let us choose an open subset $H_i\subseteq \wdtX$
in such a way that the following conditions hold:
\begin{itemize}
 \item 
 $p|_{H_i}\colon H_i\to U_i$ is a homeomorphism;
\item
$p^{-1}(U_i)=\bigcup_{g\in G}g(H_i)$ 
and $g(H_i)\cap g'(H_i)=\emptyset$
for every $g\neq g'$;
\item
$H_i\cap \wdtW\neq\emptyset$ for every $i\in J$.
\end{itemize}
Since $U_i\cap W$ is connected, the last condition easily implies
that 
\begin{equation}\label{H:eq}
H_i\cap p^{-1}(W)=H_i\cap \wdtW\qquad  {\rm for\ every}\ i\in I\ . 
\end{equation}
Since every CW-complex is paracompact (see \emph{e.g.}~\cite{para, para2}),
we may take a partition of unity $\{\varphi_i\}_{i\in I}$  
adapted to $\mathcal{U}$, and let
$\psi_i\colon \wdtX\rightarrow \matR$ be the map 
that coincides with $\varphi_i\circ p $ on $H_i$ and is null outside $H_i$.
We finally set
$$
\chi=\sum_{i \in I}\psi_i \ . 
$$
The fact that $\chi$ satisfies properties~(1) and~(2)
of the statement is proved in~\cite[Lemma 5.1]{Frigerio}.
Moreover, Equation~\eqref{H:eq} implies that for every $w\in\wdtW$ and $g\in G\setminus A$
we have $g\cdot w\notin \wdtW$, so 
$g\cdot w$ does not belong to any $H_i$, whence point~(3).
Finally, since $p(b_0)\notin U_i$ for every $i\neq i_0$, we have necessarily
$\psi_i(b_0)=0$ for every $i\neq i_0$, whence
$\psi_{i_0}(b_0)=1$ and $\chi (b_0)=1$.
\end{proof}

We are now ready to describe a morphism of pairs of resolutions
$(\beta_G^\ast,\beta_A^\ast)$ between 
the standard pair of resolutions for $(G,A;\R)$ and
the complexes of straight cochains. Let  
\begin{equation}\label{beta:eq}
\beta_G^n\colon B^n(G)\longrightarrow C^n_{cbs}(\wdtX),\qquad
\beta_A^n\colon B^n(A)\longrightarrow C^n_{cbs}(\wdtW)\ 
\end{equation}
be defined as follows:
$$
\begin{array}{lll}
\beta_G^n(f)(x_0,\dots,x_n)&=&\sum_{(g_0,\dots,g_n)\in 
G^{n+1}}\chi(g_0^{-1}x_0)
\cdots \chi(g_n^{-1}x_n)\cdot f(g_0,\dots,g_n)\ ,\\
\beta_A^n(f)(w_0,\dots,w_n)&=&\sum_{(g_0,\dots,g_n)\in 
A^{n+1}}\chi(g_0^{-1}w_0)
\cdots \chi(g_n^{-1}w_n)\cdot f(g_0,\dots,g_n)\ .
\end{array}
$$
It is proved in~\cite[Proposition 5.5]{Frigerio} that $\beta^\ast_G$ is a well-defined
chain map that extends the identity of $\R$. Using point~(3) of Lemma~\ref{mappah}, 
it is easy to show that the same is true
for $\beta^\ast_A$, and that 
$(\beta_G^\ast,\beta_A^\ast)$ indeed provides 
a morphism of pairs of resolutions.
Moreover, $\beta_G^n$ is norm non-increasing for every $n\in\matN$. Therefore, 
Proposition~\ref{buona:prop}
readily implies that $H^\ast(\beta^\ast_{G,A})$ is a norm non-increasing isomorphism. 




\subsection{Proof of Theorem \ref{isometric:iso}}\label{isom:sub}
Let us suppose that $(X,W)$ is good, and let us come back to
the diagram
$$
\xymatrix{
&H^\ast_b(G,A)\ar[ld]_{H^\ast(\beta^\ast_{G,A})} &  \\
H_{cbs}^\ast(X,W)\ar[rr]_{H^\ast(\eta_{G,A}^\ast)}
 & & H_b^\ast(X,W)\ar[lu]_{H^\ast(\alpha^\ast_{G,A})}\ .
}
$$
We already know that $H^\ast(\alpha_{G,A}^\ast)$,
$H^\ast(\beta_{G,A}^\ast)$ 
and $H^\ast(\eta_{G,A}^*)$ are norm non-increasing isomorphisms. 
Therefore, in order to conclude the proof of Theorem~\ref{isometric:iso}
we are left to show that
the above diagram commutes, 
\emph{i.e.}~that  $H^\ast(\alpha_{G,A}^\ast) \circ H^\ast (\eta_{G,A}^\ast)\circ H^\ast(\beta_{G,A}^\ast) $ is equal to the identity of
$H_b^*(G,A)$.

Let us take $f\in B^n(G,A)$. By Lemma~\ref{mappah}-(4), for every $(\gamma_0,\ldots,\gamma_n)\in G^{n+1}$,
$(g_0,\dots, g_n)\in G^{n+1}$ we have
$$
\chi(\gamma_0^{-1}g_0b_0)\cdots \chi (\gamma_n^{-1}g_nb_0) \cdot
f(\gamma_0,\ldots,\gamma_n)=\left\{
\begin{array}{ll} f(g_0,\ldots,g_n)\  & {\rm if}\ \gamma_i=g_i\ {\rm for\ every}\ i\\
0 \ & {\rm otherwise} \end{array}\right. \ ,
$$
and this readily implies that 
$$
\beta_{G,A}^n (f)(g_0b_0, \ldots, g_nb_0)=f(g_0,\ldots,g_n)\ .
$$
Putting together this equality with 
Lemma~\ref{esplicita3} we readily get 
$$
\alpha_{G,A}^n(\eta_{G,A}^n(\beta_{G,A}^n(f)))(g_0,\ldots,g_n)  = 
\beta^n_{G,A} (f) (g_0b_0,\ldots, g_nb_0)
=f(g_0,\ldots,g_n), \
$$
so $\alpha_{G,A}^n\circ \eta_{G,A}^n\circ \beta_{G,A}^n$ is the identity already at the level of cochains,
whence the conclusion.

\medskip

It is maybe worth stressing the fact that continuous bounded straight cochains
compute the bounded cohomology of the pair $(G,A)$ even
without the assumption that 
$(X,W)$ is a good pair.
More precisely we have:

\begin{teo}\label{isometric2:iso}
For every $n\in\matN$ the map
$$
H^*(\beta^*_{G,A})\colon 
H_b^n(G,A)\to H^n_{cbs}(X,W)$$ 
is an isometric isomorphism.
\end{teo}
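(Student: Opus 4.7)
The plan is to exploit the fact that Proposition~\ref{straightok} establishes properness and allowability of the continuous-bounded-straight-cochain pair of resolutions without any goodness assumption on $(X,W)$. Applying Proposition~\ref{canonical seminorm} directly to this pair yields a morphism of pairs of resolutions
$$
\tilde\alpha_G^\ast\colon C^\ast_{cbs}(\wdtX)\to B^\ast(G),\qquad
\tilde\alpha_A^\ast\colon C^\ast_{cbs}(\wdtW)\to B^\ast(A),
$$
inducing a norm non-increasing isomorphism $H^\ast(\tilde\alpha^\ast_{G,A})\colon H^\ast_{cbs}(X,W)\to H_b^\ast(G,A)$. On the other hand, as already noted at the end of Subsection~\ref{constr}, $H^\ast(\beta^\ast_{G,A})$ is itself a norm non-increasing isomorphism in the reverse direction. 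A pair of mutually inverse norm non-increasing isomorphisms is automatically a pair of isometries, so it suffices to show that the composition $\tilde\alpha^\ast_{G,A}\circ \beta^\ast_{G,A}$ induces the identity on $H^\ast_b(G,A)$.

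In fact I expect to prove the much stronger statement that $\tilde\alpha^n_{G,A}\circ \beta^n_{G,A}$ equals the identity already at the cochain level. The first step is to unwind the inductive formula~\eqref{can:eq} in the special case where the contracting homotopy is the pre-evaluation-at-$b_0$ map $t_G^\ast$ of Equation~\eqref{contract}. A short induction on $n$ (formally identical to the calculation carried out in Lemma~\ref{esplicita3}, but applied to $C_{cbs}^\ast$ directly rather than via $\eta$) should yield the explicit formula
$$
\tilde\alpha_G^n(f)(g_0,\dots,g_n)=f(g_0 b_0,\dots,g_n b_0),
$$
and analogously for $\tilde\alpha_A^n$.

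Combining this with the definition of $\beta_G^n$, for every $f\in B^n(G,A)^G$ one computes
$$
\tilde\alpha^n_{G,A}(\beta^n_{G,A}(f))(g_0,\dots,g_n)=\sum_{(h_0,\dots,h_n)\in G^{n+1}}\chi(h_0^{-1}g_0 b_0)\cdots \chi(h_n^{-1}g_n b_0)\cdot f(h_0,\dots,h_n).
$$
By Lemma~\ref{mappah}-(4), the factor $\chi(h_i^{-1}g_i b_0)$ vanishes whenever $h_i\neq g_i$ and equals $1$ when $h_i=g_i$, so the entire sum collapses onto the single term $f(g_0,\dots,g_n)$. A preliminary sanity check, needed to make sense of the composition, is that $\beta^n_G$ does send $B^n(G,A)$ into $C^n_{cbs}(\wdtX,\wdtW)$: this is immediate from Lemma~\ref{mappah}-(3), which forces the defining sum of $\beta^n_G(f)(w_0,\dots,w_n)$ with all $w_i\in\wdtW$ to be supported on tuples $(h_0,\dots,h_n)\in A^{n+1}$, where $f$ vanishes.

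I do not anticipate any substantive obstacle: the genuinely deep content has already been absorbed by Proposition~\ref{straightok} (resting on Monod's characterization of relatively injective resolutions and the explicit Bruhat-type function $\chi$ of Lemma~\ref{mappah}) and by Proposition~\ref{canonical seminorm}. The reason goodness can be dropped here, in contrast with Theorem~\ref{isometric:iso}, is precisely that we never need to compare with the bounded-cochain pair $(C_b^\ast(\wdtX),C_b^\ast(\wdtW))$; by pairing continuous bounded straight cochains directly against the standard resolutions we sidestep the delicate allowability issue of Remark~\ref{Park:rem} entirely.
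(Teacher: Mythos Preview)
Your proposal is correct and follows essentially the same approach as the paper's own proof: the paper likewise applies Proposition~\ref{canonical seminorm} directly to the proper allowable pair of Proposition~\ref{straightok} to obtain a morphism $\widehat{\alpha}^\ast$ (your $\tilde\alpha^\ast$), derives the explicit formula $\widehat{\alpha}_G^n(f)(g_0,\dots,g_n)=f(g_0b_0,\dots,g_nb_0)$ from the contracting homotopy $t_G^\ast$, and then uses Lemma~\ref{mappah}-(4) to verify that the composition with $\beta^\ast_G$ is the identity already on cochains. The only cosmetic difference is that the paper checks $\widehat{\alpha}_G^n\circ\beta_G^n=\mathrm{Id}$ on all of $B^n(G)$ rather than restricting to the relative subcomplex, but this is immaterial.
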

\begin{proof}
Recall that continuous bounded straight cochains provide a proper
allowable pair of resolutions for $(G,A;\R)$ even when
the pair $(X,W)$ is not good.
Therefore, the construction carried out in
Subsection~\ref{constr}
provides a norm non-increasing isomorphism
$$H^\ast(\beta^\ast_{G,A})\colon H^\ast_b(G,A)\rightarrow H^\ast_{cbs}(X,W)\ ,$$
and Proposition~\ref{canonical seminorm} provides a morphism of pairs of resolutions
$$
\widehat{\alpha}_G^\ast\colon C^\ast_{cbs}(\wdtX)\to B^\ast(G),\qquad
\widehat{\alpha}_A^\ast\colon C^\ast_{cbs}(\wdtW)\to B^\ast(A)\ 
$$
that induces 
a norm non-increasing isomorphism $H^\ast(\widehat{\alpha}_{G,A}^\ast)\colon H^\ast_{cbs}(X,W)\to H^\ast_b(G,A)$. Just as in the 
proof of Theorem \ref{isometric:iso}, in order to conclude
it is sufficient to show that for every $n\in\matN$
the composition $\widehat{\alpha}_{G}^n\circ \beta_{G}^n$
is the identity of $B^n(G)$.

Recall from Proposition~\ref{canonical seminorm} 
that  the map $\widehat{\alpha}_G^n$
can be described by the following inductive formula:
$$
\widehat{\alpha}_G^n (f) (g_0,\ldots,g_n)=
\widehat{\alpha}_G^{n-1}(g_0(t_G^{n}(g_0^{-1}(f))))(g_1,\dots,g_n)\ ,
$$
where $t^\ast_G$ is the contracting homotopy for the resolution $C^\ast_{cbs}(\wdtX)$
described in Equation~\eqref{contract}.
As a consequence, an easy induction shows that $\widehat{\alpha}_G^n(f)(g_0,\ldots,g_n)=
f(g_0b_0,\ldots, g_nb_0)$ for every $f\in C_{cbs}^n(\wdtX)$, $(g_0,\ldots,g_n)\in G^{n+1}$,
and this implies in turn that 
$\widehat{\alpha}_{G}^n\circ \beta_{G}^n$
is the identity of $B^n(G)$, whence the conclusion.
\end{proof}

\subsection{Proof of Theorem \ref{teo2}}\label{mainproof:sub}
In this subsection we describe how 
Theorem~\ref{teo2} can be deduced from
Theorem~\ref{isometric:iso}. 
For every $n\in \mathbb{N}$ the module $C_{cb}^n(\wdtX)$
(resp.~$C_{cb}^n(\wdtW)$) admits a natural structure of $G$-module (resp.~$A$-module). 
Moreover, it is proved in~\cite[Lemma 6.1]{Frigerio} that
the isometric isomorphism $C_b^\ast(X,W)\to C_b^\ast(\wdtX,\wdtW)^G$
induced by the 
covering projection $p\colon \wdtX\rightarrow X$ 
restricts to
an isometric isomorphism $C_{cb}^*(X,W)\to C_{cb}^*(\wdtX,\wdtW)^G$, which induces
in turn 
a natural identification
\begin{equation}\label{identifications}
H_{cb}^\ast(X,W)\cong H^\ast(C_{cb}^\ast(\wdtX,\wdtW)^G)\ .
\end{equation}

The $G$-chain map $\nu_G^\ast:C^\ast_{cbs}(\wdtX)\rightarrow C^\ast_{cb}(\wdtX)$
defined by
$$
 \nu_G^n (f)(\sigma)=f(\sigma(e_0),\ldots,\sigma (e_n))\qquad  {\rm for\ every}\ n\in\matN,\ f\in C^n_{cbs}(\wdtX),\ \sigma\in S_n (\wdtX),
$$
obviously restricts to a chain map
$\nu^*_{G,A}\colon C^\ast_{cbs}(\wdtX,\wdtW)^G\to C^\ast_{cb}(\wdtX,\wdtW)^G$.
Under the identifications  described in Equations~\eqref{cbs} and \eqref{identifications},
this chain map induces the norm non-increasing map
$$
H^\ast(\nu_{G,A}^\ast)\colon H_{cbs}^\ast(X,W)\to H_{cb}^\ast(X,W)
$$
(we cannot realize $H^\ast(\nu_{G,A}^\ast)$ as the map induced by a morphism
of pairs of resolutions just because we are not able to prove
that the pair $C^*_{cb}(\wdtX)$, $C^*_{cb}(\wdtW)$ provides a pair of resolutions
for $(G,A;\R)$ -- see Remark~\ref{noexact} below).

It readily follows from the definitions that the following diagram commutes:
$$
\xymatrix{
H_{cbs}^*(X,W)\ar[rr]^{H^*(\eta_{G,A}^\ast)}
\ar[rd]_{ H^*(\nu_{G,A}^\ast)} & &
  H_b^*(X,W)\\
& H_{cb}^*(X,W)\ar[ru]_{H^\ast(\rho_b^\ast)}  & \\
}
$$
where $H^*(\rho_b^*)\colon H_{cb}^*(X,W)\to H_b^*(X,W)$ be the map described in the
Introduction.

Let us now suppose that
$(X,W)$ is good. Then Theorem~\ref{isometric:iso} implies that 
the map $H^*(\eta_{G,A}^\ast)$ is an isometric isomorphism,
so the map $H^\ast(\nu_{G,A}^\ast)\circ H^*(\eta_{G,A}^\ast)^{-1}$ provides
a right inverse to $H^*(\rho^*_b)$.
Since  $H^*(\nu_{G,A}^\ast)$ is norm non-increasing,
this map is an isometric embedding, and this concludes the proof of
Theorem~\ref{teo2}.

\begin{rem}\label{noexact}
Suppose that $(X,W)$ is good. If we were able to prove that the complexes $C^\ast_{cb}(\wdtX)$, $C^\ast_{cb}(\wdtW)$ provide a proper pair of resolutions
for $(G,A;\R)$, then we could prove that $H^\ast(\rho_b^\ast)\colon
H^\ast_{cb}(X,W)\to H^\ast_b(X,W)$ is an isometric isomorphism for every
good pair $(X,W)$. However, it is not clear why Ivanov's contracting homotopies
should take continuous cochains into continuous cochains, thus restricting to
contracting homotopies for $C^\ast_{cb}(\wdtX)$, $C^\ast_{cb}(\wdtW)$.
\end{rem}

\subsection{(Unbounded) continuous cohomology of pairs}\label{unbounded:sub}
We conclude the section by proving Theorem~\ref{teo2bis},
which asserts that, when $(X,W)$ is a locally finite good CW-pair,
the map
$$H^*(\rho^\ast)\colon H_{c}^*(X,W)\longrightarrow H^*(X,W)$$
is an isometric isomorphism.

We first observe that, since $W$ is closed in $X$, the subspace
$S_n(W)$ is closed in $S_n(X)$ for every $n\in\matN$. Therefore,
by Tietze's Theorem
every continuous cochain
on $W$ extends to a continuous cochain on $X$, \emph{i.e.}~the restriction
map $C^\ast_c(X)\to C^\ast_c (W)$ is surjective. As a consequence,
both rows of the following commutative diagram are exact:
$$
\xymatrix{
H^{n+1}_c(X)\ar[r] \ar[d]& H^{n+1}_c (W)\ar[r]\ar[d] & H^n_c(X,W)\ar[r]\ar[d]^{H^n(\rho^\ast)} 
& H^n_c(X)\ar[r]\ar[d] & H^n_c(W)\ar[d]\\
H^{n+1}(X)\ar[r] & H^{n+1} (W)\ar[r] & H^n(X,W)\ar[r] & H^n(X)\ar[r] & H^n(W)\ .
} 
$$
Being locally finite, both $X$ and $W$ are metrizable, so
we know from~\cite[Theorem 1.1]{Frigerio} that, in the absolute case,
the vertical arrows are isomorphisms, and the Five Lemma implies
now that $H^n(\rho^\ast)$ is an isomorphism. We are left to show that
it is also an isometry.

The inclusions $C_b^\ast(X,W)\hookrightarrow C^\ast(X,W)$, 
$C_{cb}^\ast(X,W)\hookrightarrow C_c^\ast(X,W)$
induce the \emph{comparison maps}
$c^\ast\colon H^*_b(X,W)\rightarrow H^*(X,W)$, 
$c_c^\ast\colon H^*_{cb}(X,W)\rightarrow H_c^*(X,W)$
and it follows from
the very definitions that for every $\varphi\in H^n(X,W)$, $\varphi_c\in H^n_c(X,W)$
the following equalities hold:
$$
\begin{array}{ccc}
\|\varphi\|_\infty &=&\inf\{\|\psi\|_\infty\;|\;\psi \in H_b^n(X,W),
c^n(\psi)=\varphi\}\ ,\\
\|\varphi_c\|_\infty &=&\inf\{\|\psi_c\|_\infty \;|\;\psi_c \in H_{cb}^n(X,W),
c_c^n(\psi_c)=\varphi_c\}\ ,
\end{array}
$$
where understand that $\inf \emptyset =+\infty$.
Moreover, since $H^\ast(\rho^\ast)\circ c_c^\ast=c^\ast\circ H^\ast(\rho_b^\ast)$, 
for every $\varphi_c\in H^\ast_c(X,W)$ we have
$$
\begin{array}{lll}
\| H^\ast(\rho^\ast)(\varphi_c)\|_\infty &=& 
\inf\{\|\psi\|_\infty \;|\;\psi \in H_b^*(X,W),
c^*(\psi)=H^\ast(\rho^\ast)(\varphi_c)\}\\&=&
\inf\{\|\psi_c\|_\infty \;|\;\psi_c \in H_{cb}^*(X,W),
c^*(H^\ast(\rho_b^\ast)(\psi_c))=H^\ast(\rho^\ast)(\varphi_c)\}\\&=&
\inf\{\|\psi_c\|_\infty \;|\;\psi_c \in H_{cb}^*(X,W),
H^\ast(\rho^\ast)(c_c^\ast(\psi_c))=H^\ast(\rho^\ast)(\varphi_c)\}\\&=&
\inf\{\|\psi_c\|_\infty \;|\;\psi_c \in H_{cb}^*(X,W),
c_c^\ast(\psi_c)=\varphi_c\}\\&=&
\| \varphi_c \|_\infty
\end{array}
$$
where the second equality is due to Theorem~\ref{teo2} (recall that locally finite CW-pair are countable).
The proof of Theorem~\ref{teo2bis}
is now complete.

\section{The duality principle}\label{duality:sec}
This section is mainly devoted to the proof of Theorem~\ref{teo1}.
As already mentioned in the Introduction, once a suitable duality
pairing between measure homology and continuous bounded cohomology is established,
Theorem~\ref{teo1} can be easily deduced from Theorem~\ref{teo2}.

\subsection{Duality between singular homology and bounded cohomology}
Let us begin by recalling the well-known duality between bounded cohomology
and singular homology. Let $(X,W)$ be any pair of topological spaces. 
By definition, $C^n(X,W)$ is the algebraic dual
of $C_n(X,W)$, and it is readily seen that the $L^\infty$-norm
on $C^n(X,W)$ is dual to the $L^1$-norm on $C_n(X,W)$. 
As a consequence, $C_b^n(X,W)$ coincides with the topological dual of
$C_n(X,W)$. This does \emph{not} imply that
$H_b^n(X,W)$ is the dual of $H_n(X,W)$, because taking duals
of normed chain complexes 
does not commute in general with homology (see~\cite{Loh2} for a detailed
discussion of this issue). However, if we denote by
$$
\langle\cdot , \cdot  \rangle\colon H_b^n(X,W)\times H_n(X,W)\to \R
$$
the \emph{Kronecker product} induced by the pairing 
$C_b^n(X,W)\times C_n(X,W)\to \R$, then 
an application 
of Hahn-Banach Theorem (see \emph{e.g.}~\cite[Theorem 3.8]{Lohtesi} for the details)
gives the following:

\begin{prop}\label{duality:standard}
 For every $\alpha\in H_n(X,W)$ we have
$$
\|\alpha\|_1=\sup\left\{ \frac{1}{\|\varphi\|_\infty}\, \big|\, \varphi\in H_b^n(X,W),\ \langle\varphi,
\alpha\rangle =1\right\}\ ,
$$
where we understand that $\sup\emptyset =0$.
\end{prop}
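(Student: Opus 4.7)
The plan is to establish both inequalities separately and reduce the delicate direction to an application of Hahn–Banach, exactly as in the absolute case.

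For the easy inequality ``$\geq$'' I would start with any $\varphi\in H_b^n(X,W)$ satisfying $\langle\varphi,\alpha\rangle=1$ and fix a bounded cocycle representative $f\in C_b^n(X,W)$ together with a cycle representative $\beta\in C_n(X,W)$ of $\alpha$. Since $f$ vanishes on $C_n(W)$ by definition of $C^n_b(X,W)$, the pairing $f(\beta)$ is well defined on the quotient, and the elementary estimate $|f(\beta)|\leq \|f\|_\infty\|\beta\|_1$ yields $1\leq \|f\|_\infty\|\beta\|_1$. Taking the infimum over bounded cocycle representatives $f$ of $\varphi$ and over cycle representatives $\beta$ of $\alpha$ gives $1\leq \|\varphi\|_\infty\|\alpha\|_1$, i.e.~$1/\|\varphi\|_\infty \leq \|\alpha\|_1$. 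This also handles the degenerate case $\|\alpha\|_1=0$: the same computation shows that no $\varphi$ with $\langle\varphi,\alpha\rangle=1$ can exist, so the right-hand side equals $\sup\emptyset=0=\|\alpha\|_1$.

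For the nontrivial inequality ``$\leq$'' I assume $\|\alpha\|_1>0$ and construct, for any $\varepsilon>0$, an explicit $\varphi\in H_b^n(X,W)$ with $\langle\varphi,\alpha\rangle=1$ and $\|\varphi\|_\infty\leq 1/\|\alpha\|_1$. Let $Z_n\subseteq C_n(X,W)$ be the space of relative cycles and $B_n\subseteq Z_n$ the space of relative boundaries, so that $H_n(X,W)=Z_n/B_n$ and the quotient seminorm is the $L^1$-seminorm on homology. Choose a cycle $z_\alpha\in Z_n$ representing $\alpha$. On the subspace $U:=B_n+\R z_\alpha\subseteq C_n(X,W)$ define the linear functional $f(b+tz_\alpha)=t$; this is well defined because $\|\alpha\|_1>0$ forces $z_\alpha\notin B_n$, and by the very definition of $\|\alpha\|_1$ it satisfies $|f(u)|\leq \|u\|_1/\|\alpha\|_1$ for every $u\in U$. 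Hahn–Banach then yields an extension $\widetilde f\colon C_n(X,W)\to\R$ with the same operator norm $1/\|\alpha\|_1$ with respect to $\|\cdot\|_1$.

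Finally I would translate $\widetilde f$ back into bounded cohomology. Since $C_n(X,W)=C_n(X)/C_n(W)$ with the quotient norm, pulling back along the quotient map produces a bounded $n$-cochain $\widetilde f_X\in C^n_b(X)$ vanishing on $C_n(W)$, hence a well-defined element $\widetilde f_X\in C^n_b(X,W)$ with $\|\widetilde f_X\|_\infty = 1/\|\alpha\|_1$; moreover $\widetilde f$ kills $B_n$, so $\delta \widetilde f_X=0$, and $\widetilde f_X$ represents a class $\varphi\in H^n_b(X,W)$ satisfying $\langle\varphi,\alpha\rangle=\widetilde f_X(z_\alpha)=1$ and $\|\varphi\|_\infty\leq 1/\|\alpha\|_1$. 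This gives $1/\|\varphi\|_\infty\geq \|\alpha\|_1$, completing the proof.

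The only subtle point is the correct formulation of Hahn–Banach for the seminorm $\|\cdot\|_1$, since this is genuinely only a seminorm on $C_n(X,W)$; but the standard form for Minkowski functionals (or equivalently passing to the quotient by the kernel of the seminorm) applies verbatim, so no serious obstacle arises. The rest is bookkeeping about the identification $C^n_b(X,W)\cong \{f\in C^n_b(X)\mid f|_{C_n(W)}=0\}$ and the definition of the Kronecker product.
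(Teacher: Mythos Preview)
Your proof is correct and is precisely the standard Hahn--Banach argument that the paper invokes without spelling out (the paper only says ``an application of Hahn--Banach Theorem (see e.g.~[L\"oh, Theorem~3.8] for the details)''). Two cosmetic points: the $\varepsilon>0$ you introduce is never used, and the quotient $L^1$-norm on $C_n(X,W)$ is in fact a genuine norm (since $C_n(W)$ is spanned by a subset of the $\ell^1$-basis $S_n(X)$), so your caveat about seminorms is unnecessary, though harmless.
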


\subsection{Duality between measure homology and continuous bounded cohomology}
The topological dual of $\calC_*(X,W)$ does not admit an easy description,
so in order to compute seminorms in $\calH_\ast (X,W)$ via duality
more work is needed. 
We first observe that, 
if $\mu$ is any measure on $S_n(X)$ with compact determination set and $f$ is any continuous
function on $S_n (X)$, it makes sense to integrate $f$ with respect to $\mu$. Therefore,
for every $n\in\matN$ the bilinear pairing
$$
\langle\cdot,\cdot\rangle\colon
C^n_{cb}(X,W)\times \calC_n(X,W)\to \R,\qquad
\langle f,\mu\rangle=\int_{S_n(X)} f(\sigma)\, d\mu(\sigma)
$$
is well-defined. It readily follows from the definitions
that $|\langle f,\mu\rangle |\leq \| f\|_\infty\cdot
\|\mu\|_\mea$ for every $f \in C^n_{cb}(X,W)$, $\mu\in \calC_n(X,W)$, so
$C^*_{cb}(X,W)$ lies in the topological dual of $\calC_*(X,W)$.
Moreover, for every $i\in\matN$, $f \in C^i_{cb}(X,W)$ and $\mu\in \calC_{i+1}(X,W)$ 
we have $\langle \delta f,\mu\rangle= \langle f, \partial \mu\rangle$,
so this pairing defines a Kronecker product
$$
\langle\cdot,\cdot\rangle\colon H_{cb}^n(X,W)\times 
\mathcal{H}_n(X,W)\to \R
$$
such that 
\begin{equation}\label{duality:eq}
|\langle \varphi_c,\alpha\rangle|\leq \|\varphi_c\|_\infty \cdot \|\alpha\|_\mh\quad 
{\rm for\ every}\ \varphi_c\in H_{cb}^n(X,W),\ \alpha\in \mathcal{H}_n(X,W)\ .
\end{equation}

The following proposition is an immediate consequence 
of inequality~\eqref{duality:eq}, and provides 
a sort of weak duality theorem for continuous bounded cohomology and measure homology.
The term ``weak'' refers to the fact that while Proposition~\ref{duality:standard} 
allows to compute seminorms in homology in terms of seminorms in bounded cohomology, 
here only an inequality is established.
However, this turns out to be sufficient to our purposes. Moreover,
once 
Theorem~\ref{teo1} is proved, one could easily prove
that (in the case of good CW-pairs) the inequality of Proposition~\ref{duality:measure} is in fact an equality,
thus recovering a ``full'' duality between continuous bounded cohomology and measure homology.

\begin{prop}\label{duality:measure}
For every $\alpha \in \mathcal{H}_n(X,W)$ we have
$$\|\alpha\|_\mh\geq \sup\left\{\frac{1}{\|\varphi_c\|_{\infty}}\;\Big{|}\; 
\varphi_c \in H_{cb}^n(X,W),\; \langle\varphi_c,\alpha\rangle=1\right\}\ ,$$
where we understand that $\sup\emptyset=0$.
\end{prop}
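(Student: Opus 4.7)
My plan is to observe that Proposition~\ref{duality:measure} is genuinely an immediate formal consequence of inequality~\eqref{duality:eq}, which the author has already stated. So the real content lies in verifying that~\eqref{duality:eq} holds at the level of cohomology and homology classes, and this is the step I would address first.

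To establish~\eqref{duality:eq}, I would begin at the level of cochains and measures. For $f\in C_{cb}^n(X,W)$ and $\mu\in\calC_n(X)$, the definition of the total variation gives $|\int f\, d\mu|\leq \|f\|_\infty\cdot\|\mu\|_\mea$. Next I need to check that the pairing descends to $C_{cb}^n(X,W)\times \calC_n(X,W)$: since $f$ vanishes on $C_n(W)\supseteq S_n(W)$, for any $\mu\in\calC_n(W)$ the integral $\int_{S_n(W)} f\, d\mu$ is zero, so $\langle f,\mu\rangle$ depends only on the class of $\mu$ in $\calC_n(X,W)$. Taking the infimum over representatives gives $|\langle f,[\mu]\rangle|\leq \|f\|_\infty\cdot \|[\mu]\|_\mea$. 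A Leibniz-type calculation using $\delta f=0$ and $\partial\mu=0$ then shows the pairing descends further to cohomology and homology classes, and taking infima over cocycle and cycle representatives yields~\eqref{duality:eq}.

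Once~\eqref{duality:eq} is in hand, the proof of the proposition is a one-line deduction. Given any $\varphi_c\in H_{cb}^n(X,W)$ with $\langle\varphi_c,\alpha\rangle=1$, inequality~\eqref{duality:eq} yields
$$ 1 = |\langle\varphi_c,\alpha\rangle|\leq \|\varphi_c\|_\infty\cdot \|\alpha\|_\mh\ , $$
so $\|\alpha\|_\mh\geq 1/\|\varphi_c\|_\infty$. Taking the supremum over all such $\varphi_c$ produces the desired inequality. If no such $\varphi_c$ exists, the supremum is zero by convention and the inequality $\|\alpha\|_\mh\geq 0$ is automatic since $\|\cdot\|_\mh$ is a seminorm.

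There is really no obstacle here beyond the bookkeeping involved in checking that the pairing is well-defined on classes. It is worth emphasizing (and I would flag this in a remark after the proof) why only an inequality rather than an equality is obtained: the argument based on Hahn-Banach used in Proposition~\ref{duality:standard} relies on $C_b^n(X,W)$ being the full topological dual of $C_n(X,W)$, whereas $C_{cb}^n(X,W)$ is only a proper subspace of the topological dual of $\calC_n(X,W)$, so the sup of $1/\|\varphi_c\|_\infty$ only provides a lower bound for $\|\alpha\|_\mh$. The reverse inequality can be recovered \emph{a posteriori} from Theorem~\ref{teo1} in the good CW-pair case, but that direction is not needed for the application to Theorem~\ref{teo1} itself.
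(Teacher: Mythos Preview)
Your proposal is correct and matches the paper's approach exactly: the paper also derives inequality~\eqref{duality:eq} from the cochain-level bound $|\langle f,\mu\rangle|\leq\|f\|_\infty\cdot\|\mu\|_\mea$ and the compatibility $\langle\delta f,\mu\rangle=\langle f,\partial\mu\rangle$, and then states that the proposition is an immediate consequence. Your additional remarks on why only an inequality (rather than a Hahn--Banach equality) is available here, and on recovering the reverse inequality a posteriori from Theorem~\ref{teo1}, also mirror the paper's comments surrounding the proposition.
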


\subsection{Proof of the Theorem \ref{teo1}}\label{teo1proof:sub}
We are now ready to conclude the proof of Theorem~\ref{teo1}.
The following result readily follows from the definitions,
and ensures that the 
Kronecker products introduced in the previous subsections
are compatible with each other.

\begin{prop}\label{compatibility}
For every $\varphi_c\in H^n_{cb}(X,W)$,
$\alpha \in H_n(X,W)$ we have
$$\langle H^n(\rho_b^\ast)(\varphi_c),\alpha\rangle=
\langle \varphi_c,H_n(\iota_\ast)(\alpha)\rangle\ .$$
\end{prop}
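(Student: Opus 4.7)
The plan is to chase the definitions at the level of representatives; no machinery beyond what has already been recalled is needed. I would pick a bounded continuous cocycle $f\in C^n_{cb}(X,W)$ representing $\varphi_c$ and a relative cycle $c=\sum_{i=0}^k a_i\sigma_i\in C_n(X,W)$ representing $\alpha$. Since $\rho_b^\ast$ is simply the inclusion of complexes $C^\ast_{cb}(X,W)\hookrightarrow C^\ast_b(X,W)$, the class $H^n(\rho_b^\ast)(\varphi_c)$ is represented by the same cochain $f$, regarded as a bounded (a priori non-continuous) cochain. Unwinding the definition of the standard Kronecker product between $H^n_b(X,W)$ and $H_n(X,W)$ gives
$$\langle H^n(\rho_b^\ast)(\varphi_c),\alpha\rangle \;=\; f(c) \;=\; \sum_{i=0}^k a_i\, f(\sigma_i).$$

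On the other hand, by the explicit definition of $\iota_\ast$ in the Introduction, $\iota_n(c)=\sum_{i=0}^k a_i\,\delta_{\sigma_i}$, so $H_n(\iota_\ast)(\alpha)$ is represented by this atomic measure chain. Plugging this into the Kronecker product between $H^n_{cb}(X,W)$ and $\calH_n(X,W)$ and using linearity of the integral together with the defining property $\int_{S_n(X)} f\,d\delta_{\sigma_i}=f(\sigma_i)$ of the Dirac measure, I obtain
$$\langle \varphi_c, H_n(\iota_\ast)(\alpha)\rangle \;=\; \int_{S_n(X)} f\,d\!\left(\sum_{i=0}^k a_i\,\delta_{\sigma_i}\right) \;=\; \sum_{i=0}^k a_i\, f(\sigma_i),$$
which matches the previous expression.

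There is essentially no obstacle here: the whole purpose of the chain map $\sigma\mapsto\delta_\sigma$ is precisely that integration against $\delta_\sigma$ reproduces evaluation on $\sigma$, so compatibility of the two Kronecker products holds already at the cochain/chain level. The only remaining point is that the identity is independent of the choices of $f$ and $c$, but this is immediate from the fact that both pairings descend to (co)homology (as already used to define them in the preceding subsections, via the compatibility $\langle \delta f,\mu\rangle=\langle f,\partial\mu\rangle$ on the measure side and its analogue on the singular side).
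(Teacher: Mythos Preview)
Your argument is correct and is exactly the unpacking that the paper leaves to the reader: the text merely asserts that the proposition ``readily follows from the definitions'' and gives no further proof. Your chain-level computation with a cocycle $f$ and a cycle $c=\sum a_i\sigma_i$ is precisely the intended verification.
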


Let us now suppose that $(X,W)$ is a good CW-pair. We already know
that the map $H_\ast(\iota_\ast)\colon H_\ast(X,W)\to\mathcal{H}_\ast(X,W)$ is a norm non-increasing
isomorphism, so we are left to show that
$\|H_\ast(\iota_\ast)(\alpha)\|_\mh\geq \|\alpha\|_1$ for every $\alpha\in H_\ast(X,W)$.

However, for every $\alpha\in H_n(X,W)$ we have
\begin{align*}
\|H_n(\iota_\ast)(\alpha)\|_\mh &\geq \sup \left\{\frac{1}
{\|\varphi_c\|_\infty}\;\Big{|} \;\varphi_c \in  H_{cb}^n(X,W),
\langle \varphi_c, H_n(\iota_\ast)(\alpha)\rangle=1\right\}\\
&= \sup \left\{\frac{1}{\|\varphi_c\|_\infty}\;\Big{|}\; 
\varphi_c \in  H_{cb}^n(X,W),\langle H^n(\rho_b^\ast)(\varphi_c), \alpha\rangle=1\right\}\\
&
= \sup \left\{\frac{1}{\|\varphi\|_\infty}\;\Big{|}\; 
\varphi \in  H_{b}^n(X,W),\langle \varphi, \alpha\rangle=1\right\}\\
& =\|\alpha\|_1
\ ,
\end{align*}
where the first inequality is due to Proposition~\ref{duality:measure}, the first equality
to Proposition~\ref{compatibility}, the second equality to Theorem~\ref{teo2}, and the 
last equality to Proposition~\ref{duality:standard}.

The proof of Theorem~\ref{teo1} is now complete.

\begin{rem}
Let $(X,W)$ be \emph{any} CW-pair.
 The arguments described in this section show that if 
$H^\ast(\rho_b^\ast)\colon H^*_{cb}(X,W)\to H_b(X,W)$
admits a norm non-increasing right inverse, then the map
$H_*(\iota_*)\colon H_*(X,W)\to \calH_*(X,W)$ is an isometric isomorphism.
\end{rem}

\section{A comparison with Park's seminorms}\label{park:sec}
In \cite{Park}, Park describes
an algebraic foundation of relative bounded cohomology
of pairs, both in the case  of a pair of groups $(G,A)$ equipped with a
homomorphism $A\rightarrow G$
and in the case of a pair of path connected topological spaces $(X,W)$ equipped with a continuous
map $W\rightarrow X$. 
However, recall from the Introduction that the seminorms considered by Park are
quite different from the seminorms considered in this paper, which date back to Gromov~\cite{Gromov}.
In this section we investigate the relationships between our seminorms and the seminorms
introduced in~\cite{Park}, proving in particular that there exist examples 
for which they are \emph{not}
isometric to each other.

\subsection{Park's mapping cone for homology}
Let $(X,W)$ be a countable CW-pair, where both $X$ ad $W$ are connected, 
and let us suppose that the inclusion $i\colon W\hookrightarrow X$
induces an injective map on the fundamental groups
(several considerations here below also hold without this last assumption,
but this is not relevant to our purposes).
We also denote by 
$i_\ast\colon C_*(W)\to C_*(X)$ the map induced by the inclusion $i$.
The homology mapping cone complex of $(X,W)$ 
is the complex $(C_\ast(W\to X),\overline{d}_\ast)=(C_\ast(X)\oplus C_{\ast-1}(W), 
\overline{d}_\ast)$, where
$$
\begin{array}{cccccccc}
\overline{d}_n\colon &C_n(X)&\oplus &C_{n-1}(W) &\longrightarrow & C_{n-1}(X)&\oplus &C_{n-2}(W)\\
&(u_n&,&v_{n-1})&\longmapsto &
(d_n u_n+i_{n-1}(v_{n-1})&,&-d_{n-1} v_{n-1})\ ,
\end{array}
$$
and $d_\ast$ denotes the usual differential both of $C_\ast(X)$ and of $C_\ast (W)$.
The homology of the mapping cone $(C_\ast(W\to X), \overline{d}_\ast)$ 
is  denoted by $H_\ast(W\rightarrow X)$. 
For every $\omega\in[0,\infty)$ one can endow $C_*(W\to X)$ with the 
 $L^1$-norm
$$\|(u,v)\|_1(\omega)=\|u\|_1+ (1+\omega) \|v\|_1\ , $$
which induces in turn a seminorm (still denoted by $\|\cdot \|_1 (\omega)$) 
on $H_*(W\rightarrow X)$ (in fact, in~\cite{Park2} the case $\omega=\infty$
is also considered, but this is not relevant to our purposes).

As observed in~\cite{Park2}, the chain map
\begin{equation}\label{betaPark:eq}
\beta_\ast\colon C_\ast(W\to X)\to C_\ast(X,W)=C_\ast(X)/C_\ast(W),\qquad
\beta_\ast (u,v)=[u]
\end{equation}
induces an isomorphism $$H_\ast(\beta_\ast)\colon H_\ast(W\to X)\to H_\ast(X,W)\ .$$
The explicit description of $\beta_*$ implies that 
$$
\| H_\ast(\beta_\ast)(\alpha) \|_1 \leq \|\alpha \|_1 (0)\leq \|\alpha \|_1 (\omega)
$$
for every 
$\alpha\in H_\ast(W\rightarrow X)$, $\omega\in [0,\infty)$.

\subsection{Park's mapping cone for bounded cohomology}
The mapping cone for bounded cohomology can be defined as the (topological) dual of the mapping cone for homology.
More precisely, let us fix $\omega\in [0,\infty)$, and let us endow $C_\ast(W\to X)$ with
the norm $\| \cdot \|_1(\omega)$. Then it is readily seen that the topological dual of
$C_n(W\to X)=C_n(X)\oplus C_{n-1}(W)$ is isometrically isomorphic to the space
$$
C_b^n (W\to X)=C_b^n(X)\oplus C_b^{n-1}(W)
$$
endowed with the $L^\infty$-norm $\|\cdot\|_\infty (\omega)$ defined by
$$
\|(f,g)\|_\infty(\omega)=\max\{\|f\|_\infty ,(1+\omega)^{-1}
\|g\|_\infty\}\ .
$$
In other words,  
the pairing
$$
C_b^\ast(W\rightarrow X)\times C_\ast(W\rightarrow X)
\to\matR,\qquad 
((f,f'),(a,a'))\mapsto f(a)-f'(a')
$$
realizes $C_b^\ast(W\to X)$ as the  topological dual of $C_\ast(W\to X)$,
and an easy computation shows that the norm $\| \cdot \|_\infty(\omega)$ just introduced
on $C_b^\ast(W\to X)$ coincides with the operator norm (with respect
to the norm $\|\cdot \|_1(\omega)$ fixed on $C_\ast(W\to X)$).
Therefore, 
if $i^\ast\colon C_b^\ast(X)\rightarrow C_b^\ast(W)$ is the 
cochain map induced by the inclusion, then
the cohomology mapping cone complex of $(X,W)$ is 
the complex $(C_b^\ast (W\to X),\overline{\delta}^\ast)$,
where
$\overline{\delta}^\ast$ is defined as the dual map of $\overline{d}_\ast$, and admits therefore the following explicit
description (see~\cite{Park} for the details):
$$
\begin{array}{cccccccc}
\overline{\delta}^n\colon&C_b^n(X)&\oplus &C_b^{n-1}(W) &\longrightarrow 
& C_b^{n+1}(X)&\oplus &C_b^{n}(W)\\
&(f_n&,&g_{n-1})&\longmapsto &(\delta^n f_n&,
&-i^n(f_n)-\delta^{n-1} g_{n-1})\ 
\end{array}
$$
(here $\delta^*$ denotes the usual differential both of $C_b^\ast(X)$ and of $C_b^\ast(W)$).
The cohomology of the complex $(C_b^\ast(W\to X),\overline{\delta^\ast})$ is denoted by
$H_b^\ast(W\rightarrow X)$. 
Just as in the case of homology, 
the $L^\infty$-norm $\|\cdot \|_\infty(\omega)$ on $C^n_b(W\to X)$ 
descends to a seminorm (still denoted by
$\|\cdot \|_\infty(\omega)$)  on $H^\ast_b(W\to X)$.

The chain map
$$
\beta^\ast\colon C^\ast_b(X,W)\to C^\ast_b(W\to X),\qquad
\beta^\ast(f)=(f,0)
$$
is the dual of the chain map $\beta_\ast$ introduced in Equation~\eqref{betaPark:eq}
above, and induces an isomorphism
$$
H^\ast(\beta^\ast)\colon H^\ast_b(X,W)\to H^\ast_b(W\to X)\ 
$$
such that
$$
\| H^\ast(\beta^\ast)(\varphi) \|_\infty(\omega) \leq \|H^\ast(\beta^\ast)(\varphi) \|_\infty (0)\leq 
\|\varphi\|_\infty
$$ 
for every $\varphi\in H^\ast(X,W)$, $\omega\in [0,\infty)$.
More precisely, 
the following result is proved in~\cite[Theorem 4.6]{Park}:

\begin{teo}\label{biLipschitz}
For every $n\in\matN$, the isomorphism
$H^n(\beta^\ast)$
is such that
$$
\frac{1}{n+2}\|\varphi\|_\infty\leq \|H^n(\beta^\ast)(\varphi)\|_\infty(0)
\leq \|\varphi\|_\infty \qquad \text{for\ every}\;\; \varphi\in H^n_b(X,W)\ .
$$
\end{teo}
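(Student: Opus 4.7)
The proof is purely algebraic, manipulating cocycles in the two complexes. The upper bound is immediate from the definition of $\beta^\ast$: given $\varphi \in H_b^n(X,W)$ and any representative $f \in C_b^n(X,W) \subseteq C_b^n(X)$ with $\|f\|_\infty$ arbitrarily close to $\|\varphi\|_\infty$, the cocycle $\beta^n(f) = (f,0)$ represents $H^n(\beta^\ast)(\varphi)$ and satisfies $\|(f,0)\|_\infty(0) = \max\{\|f\|_\infty, \|0\|_\infty\} = \|f\|_\infty$. Taking infimum gives $\|H^n(\beta^\ast)(\varphi)\|_\infty(0) \leq \|\varphi\|_\infty$.

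For the lower bound, the plan is to show that every class in $H^n_b(W\to X)$ in the image of $H^n(\beta^\ast)$ admits a representative of the form $(f',0)$ with $\|f'\|_\infty$ controlled by $(n+2)\|(f,g)\|_\infty(0)$, where $(f,g)$ is any starting representative. Fix a cocycle $(f,g) \in C_b^n(W\to X)$, so $\delta f = 0$ and $i^n(f) + \delta^{n-1} g = 0$. I would produce the sought representative by seeking a coboundary $\overline{\delta}^{n-1}(h,0) = (\delta h, -i^{n-1}(h))$ that cancels the second coordinate, i.e.~I want $h \in C_b^{n-1}(X)$ with $h|_W = -g$; then $(f - \delta h, g + h|_W) = (f - \delta h, 0)$ is cohomologous to $(f,g)$, and setting $f' := f - \delta h$ one checks $f'|_W = f|_W + \delta g = 0$ (by the cocycle condition) and $\delta f' = 0$, so $f' \in C_b^n(X,W)$ is a cocycle representing $(H^n(\beta^\ast))^{-1}([(f,g)])$.

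The construction of $h$ is the crucial—but elementary—step. Since $C_\ast(W) \subseteq C_\ast(X)$ is a direct summand at the level of free modules (the basis $S_\ast(X)$ splits as $S_\ast(W) \sqcup (S_\ast(X) \setminus S_\ast(W))$), I can simply define
\[
h(\sigma) = \begin{cases} -g(\sigma) & \text{if } \sigma \in S_{n-1}(W),\\ 0 & \text{otherwise}, \end{cases}
\]
extended linearly. This gives $h \in C_b^{n-1}(X)$ with $h|_W = -g$ and $\|h\|_\infty = \|g\|_\infty$. Then the coboundary estimate
\[
\|\delta h\|_\infty \leq (n+1)\|h\|_\infty = (n+1)\|g\|_\infty
\]
combined with the triangle inequality yields
\[
\|f'\|_\infty \leq \|f\|_\infty + (n+1)\|g\|_\infty \leq (n+2)\max\{\|f\|_\infty, \|g\|_\infty\} = (n+2)\|(f,g)\|_\infty(0).
\]
Taking infima over representatives of $\varphi$ and of $H^n(\beta^\ast)(\varphi)$ gives $\|\varphi\|_\infty \leq (n+2)\|H^n(\beta^\ast)(\varphi)\|_\infty(0)$.

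There is no real obstacle: the argument is a direct algebraic manipulation, and the loss of a factor $(n+1)$ comes exclusively from the coboundary operator $\delta^{n-1}$ acting on an $(n-1)$-cochain. One might wonder whether a cleverer extension of $g$ could shrink the constant, but the bound $(n+2)$ appears to be the natural output of this split-and-extend strategy, matching the statement.
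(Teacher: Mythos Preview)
Your argument is correct. Note, however, that the paper does not actually supply a proof of this theorem: it is quoted verbatim as \cite[Theorem 4.6]{Park} and used as a black box, so there is no ``paper's own proof'' to compare against. Your extend-by-zero construction of $h$ and the resulting estimate $\|f'\|_\infty\leq \|f\|_\infty+(n+1)\|g\|_\infty$ give a clean, self-contained justification; the only point worth making explicit is that the fact $[f']=\varphi$ in $H_b^n(X,W)$ uses that $H^n(\beta^\ast)$ is already known to be an isomorphism (stated just before the theorem), so that the cohomology relation $\beta^n(f')\sim(f,g)$ in the cone complex forces $[f']$ to be the preimage of $H^n(\beta^\ast)(\varphi)$.
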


It is asked in~\cite{Park} 
whether $H^\ast(\beta^\ast)$ is actually an isometry or not.
We show in Proposition~\ref{nonisometry} below
that there exists examples 
for which  $H^\ast(\beta^\ast)$ is \emph{not} an isometry.

\subsection{Mapping cones and duality}
In the previous subsection we have seen that, for every 
$\omega\geq 0$, the normed space $(C^\ast_b(W\to X), \|\cdot \|_\infty(\omega))$ coincides 
with the topological dual of the normed space 
$(C_\ast(W\to X), \|\cdot \|_1(\omega))$. We may therefore apply the duality
result proved in~\cite[Theorem 3.14]{Lohtesi}, and obtain the following:

\begin{prop}\label{dualitycone}
 If the map
$$
H^\ast(\beta^\ast)\colon \left(H^\ast_b(X,W),\|\cdot\|_\infty\right)\to \left(H^\ast_b(W\to X), \|\cdot \|_\infty(\omega)\right)
$$
is an isometric isomorphism, then
$$
\| H_\ast(\beta_\ast)(\alpha)\|_1=\| \alpha \|_1(\omega)
$$
for every $\alpha\in H_\ast(X,W)$.
\end{prop}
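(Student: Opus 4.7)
The plan is to derive this as a direct application of the abstract duality principle for normed chain complexes (the result cited as Theorem~3.14 in L\"oh's thesis), combined with the fact that, by construction, $\beta^\ast$ is the topological dual of $\beta_\ast$ with respect to the chosen weighted norms (here $\alpha$ is to be read as a class in $H_\ast(W\to X)$, which is what makes the conclusion dimensionally consistent).

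First, I would invoke the duality principle in the two relevant settings. Applied to $(C_\ast(X,W),\|\cdot\|_1)$, whose topological dual is $(C_b^\ast(X,W),\|\cdot\|_\infty)$, it yields
$$\|H_\ast(\beta_\ast)(\alpha)\|_1 \;=\; \sup_{0\neq\varphi\in H^\ast_b(X,W)}\frac{|\langle\varphi,H_\ast(\beta_\ast)(\alpha)\rangle|}{\|\varphi\|_\infty}\ .$$
Applied to $(C_\ast(W\to X),\|\cdot\|_1(\omega))$, whose topological dual is $(C_b^\ast(W\to X),\|\cdot\|_\infty(\omega))$ (an identification explicitly recorded in the text just before the statement of the proposition), it yields
$$\|\alpha\|_1(\omega) \;=\; \sup_{0\neq\psi\in H^\ast_b(W\to X)}\frac{|\langle\psi,\alpha\rangle|}{\|\psi\|_\infty(\omega)}\ .$$

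Second, I would record the adjunction between $\beta^\ast$ and $\beta_\ast$. From the explicit formulas, for $f\in C_b^n(X,W)$ and $(u,v)\in C_n(W\to X)$ one has at the (co)chain level
$$\langle\beta^n(f),(u,v)\rangle \;=\; \langle(f,0),(u,v)\rangle \;=\; f(u) \;=\; \langle f,\beta_n(u,v)\rangle\ ,$$
so passing to (co)homology gives the identity
$$\langle H^n(\beta^\ast)(\varphi),\alpha\rangle \;=\; \langle\varphi,H_n(\beta_\ast)(\alpha)\rangle\ ,\qquad \varphi\in H^n_b(X,W),\ \alpha\in H_n(W\to X)\ .$$

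Third, I would combine the pieces. Since $H^\ast(\beta^\ast)$ is by hypothesis an isometric isomorphism, the change of variable $\psi=H^\ast(\beta^\ast)(\varphi)$ (satisfying $\|\psi\|_\infty(\omega)=\|\varphi\|_\infty$) transforms the second duality formula into a supremum over $\varphi\in H^\ast_b(X,W)$; invoking the adjunction identity then yields
$$\|\alpha\|_1(\omega) \;=\; \sup_{0\neq\varphi\in H^\ast_b(X,W)}\frac{|\langle\varphi,H_\ast(\beta_\ast)(\alpha)\rangle|}{\|\varphi\|_\infty} \;=\; \|H_\ast(\beta_\ast)(\alpha)\|_1\ ,$$
the last equality being the first duality formula. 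The argument is essentially formal once the duality principle is at hand; the only delicate point is to ensure that this principle applies to the weighted mapping cone complex, but this is already guaranteed by the explicit isometric identification of its topological dual recorded earlier in the section. Hence I do not anticipate a serious obstacle.
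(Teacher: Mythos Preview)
Your proposal is correct and follows exactly the approach of the paper: the paper simply observes that $(C^\ast_b(W\to X),\|\cdot\|_\infty(\omega))$ is the topological dual of $(C_\ast(W\to X),\|\cdot\|_1(\omega))$ and then cites the abstract duality principle (Theorem~3.14 in L\"oh's thesis) without further argument. Your write-up is a faithful unpacking of what that citation entails---the two duality formulas, the adjunction $\langle H^\ast(\beta^\ast)(\varphi),\alpha\rangle=\langle\varphi,H_\ast(\beta_\ast)(\alpha)\rangle$, and the change of variable via the assumed isometry---and your reading of the typo ($\alpha\in H_\ast(W\to X)$ rather than $H_\ast(X,W)$) is the correct one.
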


\subsection{An explicit example}
Let $M$ be a compact, connected, orientable manifold with connected
boundary,
and suppose that the inclusion
$i\colon \partial M\to M$ induces an injective homomorphism
$i_\ast\colon \pi_1(\partial M)\to\pi_1(M)$.

We denote by $[M,\partial M]$ the (real) fundamental class in 
$H_n(M,\partial M)$ and we set
 $$[\partial M\hookrightarrow M]=H_n(\beta_\ast)^{-1}([M,\partial M])\in H_n(\partial M\to M)\ .$$ 
The $L^1$-seminorm $\| [M,\partial M]\|_1$ of the real fundamental class of $M$ is usually
known as the \emph{simplicial volume} of $M$, and it is denoted simply
by $\| M\|$. Similarly, the $L^1$-seminorm of the real fundamental class
$[\partial M]\in H_{n-1}(\partial M)$ is the simplicial volume of $\partial M$,
and it is denoted by $\|\partial M\|$.

\begin{lemma}\label{conto}
 We have
$$
\| [\partial M\to M ]\|_1 (\omega)\geq \|M\| +(1+\omega)\|\partial M\| \ .
$$
\end{lemma}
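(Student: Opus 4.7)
The strategy is to unwind the definition of the seminorm $\|\cdot\|_1(\omega)$ on the mapping cone and extract separate lower bounds on each component of a representing cycle. Let $(u,v) \in C_n(M) \oplus C_{n-1}(\partial M)$ be an arbitrary cycle representing the class $[\partial M\to M]$. The condition $\overline{d}_n(u,v)=0$ translates into $d_{n-1}v=0$ and $d_n u = -i_{n-1}(v)$; in particular $u$ is a relative cycle whose class $[u]\in C_n(M)/C_n(\partial M)$ coincides with $\beta_n(u,v)$.

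Since $H_n(\beta_\ast)([(u,v)])=[M,\partial M]$ by the very definition of $[\partial M\to M]$, the chain $u$ is a relative fundamental cycle for the pair $(M,\partial M)$, which immediately gives
$$\|u\|_1 \geq \|[M,\partial M]\|_1 = \|M\|.$$
For the second component, the injectivity of $i_{n-1}\colon C_{n-1}(\partial M)\to C_{n-1}(M)$ combined with the equality $d_n u = -i_{n-1}(v)$ shows that $-v$ is precisely the preimage of $d_n u$ in $C_{n-1}(\partial M)$; this is the chain-level description of the image of $[u]=[M,\partial M]$ under the connecting homomorphism $\partial\colon H_n(M,\partial M)\to H_{n-1}(\partial M)$ in the long exact sequence of the pair $(M,\partial M)$. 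Since $\partial$ sends the real relative fundamental class $[M,\partial M]$ to $\pm[\partial M]$ (a standard fact for any compact oriented manifold with boundary), and since $i_{n-1}$ preserves $L^1$-norms, we deduce
$$\|v\|_1 = \|-v\|_1 \geq \|[\partial M]\|_1 = \|\partial M\|.$$

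Combining the two inequalities yields
$$\|(u,v)\|_1(\omega) = \|u\|_1 + (1+\omega)\|v\|_1 \geq \|M\| + (1+\omega)\|\partial M\|,$$
and taking the infimum over all cycles $(u,v)$ representing $[\partial M\to M]$ produces the desired bound. No genuine obstacle arises: the argument is essentially the observation that a mapping cone cycle representing the relative fundamental class must split into a relative fundamental cycle of $M$ and (minus) a fundamental cycle of $\partial M$, so the weighted $L^1$-norm of such a cycle automatically separates into the two simplicial volumes with the prescribed weights.
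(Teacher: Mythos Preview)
Your proof is correct and follows essentially the same route as the paper's: both arguments take an arbitrary mapping-cone cycle representing $[\partial M\to M]$, show that its first component is a relative fundamental cycle of $(M,\partial M)$ and its second component represents (up to sign) the fundamental class of $\partial M$, and then read off the two simplicial-volume lower bounds. The only cosmetic difference is that the paper fixes a specific representative $(\alpha,-d_n\alpha)$ and compares an arbitrary representative to it via an explicit $(x,y)\in C_{n+1}(M)\oplus C_n(\partial M)$, whereas you invoke $\beta_\ast$ and the connecting homomorphism directly; the content is the same.
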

\begin{proof}
It is shown in~\cite{Park2} that, if $\alpha\in C_i(M)$ is such that
$d_i \alpha\in C_{i-1}(\partial M)$ (so that $\alpha$ defines an element
$[\alpha]\in H_i(M,\partial M)$), then 
$$
H_i(\beta_\ast)^{-1}([\alpha])=[(\alpha,-d_i \alpha)]\ .
$$
Therefore, if $\alpha\in C_n(M)$ is a representative of
the fundamental class $[M,\partial M]\in H_n(M,\partial M)$, then
$(\alpha,-d_n \alpha)$ is a representative of $[\partial M\hookrightarrow M]\in H_n(\partial M\to M)$.
If $(\alpha',\gamma)$ is any other representative of such a class,
then by definition of mapping cone there exist $x\in C_{n+1}(M)$ and $y\in C_n(\partial M)$ such that:
\begin{equation*}
\left \{
\begin{array}{lcc}  
\alpha-\alpha'&=&d_{n+1} x+i_n (y)\\
\gamma+d_n \alpha&=&- d_{n} y\ .
\end{array}
\right.
\end{equation*}
These equalities readily imply that $[\alpha']=[\alpha]$ in $H_n(M,\partial M)$
and 
$[\gamma]=[-d_n \alpha]$ in $H_{n-1}(\partial M)$.
As a consequence, since $d_n \alpha$ is a representative
of the fundamental class of $\partial M$, we have
$\| \alpha '\|_1\geq \|[\alpha']\|_1=\| M\|$ and $\|\gamma \|_1\geq
\|[\gamma]\|_1=\| \partial M\|$, whence 
$$
\| (\alpha',\gamma)\|_1 (\omega)\geq \|M\| +(1+\omega)\|\partial M\|\ .
$$
The conclusion follows from the fact that
$(\alpha',\gamma)$ is an arbitrary representative of $[\partial M\to M]$.
\end{proof}

\begin{prop}\label{nonisometry}
 Let $M$ be a compact connected orientable hyperbolic $n$-manifold 
with connected geodesic boundary.
Then, for every $\omega\in [0,\infty)$ the isomorphism
$$
H^n(\beta^\ast)\colon \left(H^n_b(X,W),\|\cdot \|_\infty\right)\to 
\left(H_b^n(W\hookrightarrow X),\|\cdot \|_\infty(\omega)\right)
$$
is not isometric.
\end{prop}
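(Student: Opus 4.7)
The plan is to argue by contradiction using the duality principle of Proposition~\ref{dualitycone} together with the explicit lower bound provided by Lemma~\ref{conto}. Suppose $H^n(\beta^\ast)$ were an isometric isomorphism for some $\omega \in [0,\infty)$. Applying Proposition~\ref{dualitycone} to the class $[\partial M \hookrightarrow M] \in H_n(\partial M \to M)$ yields
$$
\|H_n(\beta_\ast)([\partial M \hookrightarrow M])\|_1 \;=\; \|[\partial M \hookrightarrow M]\|_1(\omega).
$$
By definition, $H_n(\beta_\ast)([\partial M \hookrightarrow M]) = [M,\partial M]$, so the left-hand side is precisely the simplicial volume $\|M\|$.

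On the other hand, Lemma~\ref{conto} (combined with the fact that the fundamental class of the boundary coincides, up to sign, with the boundary of any relative fundamental cycle) gives the reverse bound
$$
\|[\partial M \hookrightarrow M]\|_1(\omega) \;\geq\; \|M\| \;+\; (1+\omega)\,\|\partial M\|.
$$
Since $M$ is compact we know $\|M\| < \infty$, so combining the two displayed expressions forces $(1+\omega)\|\partial M\| \leq 0$, and hence $\|\partial M\| = 0$.

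The remaining step is to observe that this contradicts the positivity of the simplicial volume of closed hyperbolic manifolds. Indeed, $\partial M$ is a closed, connected, orientable hyperbolic $(n-1)$-manifold (with $n \geq 3$, since the case $n=2$ is excluded by the nontriviality of the statement). By Gromov's proportionality principle between simplicial and Riemannian volume for closed hyperbolic manifolds, $\|\partial M\| > 0$, providing the desired contradiction. The only point requiring care is to verify that $[\partial M \hookrightarrow M]$ is actually well defined in the mapping-cone setup of this section, but this follows from $\pi_1$-injectivity of $\partial M \hookrightarrow M$, which is automatic for hyperbolic manifolds with totally geodesic boundary (via Cartan-Hadamard applied to the universal cover). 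There is no real obstacle here beyond invoking these two ingredients; the proof is short once the duality framework is in place.
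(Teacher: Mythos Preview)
Your proof is correct and follows essentially the same route as the paper's own argument: both combine Proposition~\ref{dualitycone} (duality) with Lemma~\ref{conto} (the lower bound) and the positivity of $\|\partial M\|$ for closed hyperbolic manifolds to derive a contradiction, after noting $\pi_1$-injectivity of the boundary inclusion. The only difference is cosmetic ordering and your added remark on the dimension restriction $n\geq 3$, which the paper leaves implicit in the claim that $\partial M$ is a closed hyperbolic $(n-1)$-manifold with $\|\partial M\|>0$.
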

\begin{proof}
It is well-known that the inclusion $\partial M\hookrightarrow M$ induces an injective
map on fundamental groups. Moreover, since $\partial M$ is a closed
orientable hyperbolic $(n-1)$-manifold, we also have $\|\partial M\|>0$.
By Proposition~\ref{dualitycone}, if $H^n(\beta^\ast)$ were an isometry we would have
$\|[\partial M\to M]\|_1(\omega)=\|[M,\partial M]\|_1=\| M\|$, and this contradicts
Lemma~\ref{conto}.
\end{proof}

\bibliographystyle{amsalpha}
\bibliography{biblio}

\end{document}